\numberwithin{equation}{section}
\newcommand{\C}{\mathbb{C}}     % Complex numbers
\newcommand{\Q}{\mathbb{Q}}     % Rational numbers
\newcommand{\R}{\mathbb{R}}     % Real numbers
\newcommand{\Z}{\mathbb{Z}}      % Integer numbers
\newcommand{\A}{\mathcal{A}}    % scrip A
\newcommand{\M}{\mathcal{M}}    % Moduli space
\newcommand{\tr}{\operatorname{tr}}
\DeclareMathOperator{\Tr}{Tr}           % Trace
\DeclareMathOperator{\Aug}{Aug}        % Aug
\DeclareMathOperator{\Crit}{Crit}        % Crit
\DeclareMathOperator{\im}{im}        % im
\DeclareMathOperator{\codim}{codim}        % codim
\DeclareMathOperator{\Spec}{Spec}        % Spec
\DeclareMathOperator{\RL}{\R\times \Lambda_\mathit{K}}        % RL
\DeclareMathOperator{\Md}{\mathit{M}_\mathit{K}^\delta}        % Md
\DeclareMathOperator{\Ld}{\mathit{L}_\mathit{K}^\delta}        % Ld
\DeclareMathOperator{\LK}{\Lambda_\mathit{K}}        % LK
\DeclareMathOperator{\An}{An}        % An
\DeclareMathOperator{\ev}{ev}
\DeclareMathOperator{\coker}{coker}
\DeclareMathOperator{\Span}{Span}
\newcommand{\MMa}{\mathcal{M}_{\mathrm{an}}(c)}  
\newcommand{\MMb}{\mathcal{M}(c;L)}  
\newcommand{\MMc}{\mathcal{M}_{\mathrm{an}}(c)\times_{\ev_{\partial}}\mathcal{M}(L)} 
\newcommand{\MMd}{\mathcal{M}(c)\times_{\ev_{\partial}}\mathcal{M}_{\mathrm{an}}(\R^3,L)} 
\newcommand{\MMe}{\mathcal{M}_{\mathrm{an}}(c,\boldsymbol{a})}
\newcommand{\MMf}{\mathcal{M}(c,\xi)}
\newcommand{\MMg}{\mathcal{M}(c,\eta)}
\newcommand{\MMh}{\mathcal{M}(c,\sigma)}
\newcommand{\MMi}{\mathcal{M}(c)\times_{\ev_{\partial}}\mathcal{M}_{\mathrm{an}}(\R^3,L)} 
\newcommand{\MMj}{\mathcal{M}^{\infty}(c,\sigma)}
\newcommand{\MMk}{\mathcal{M}(c,\boldsymbol{a},\sigma)} 
\newcommand{\cdts}{\cup \qquad \cdots} 
\newcommand{\MMl}{\mathcal{M}(c,V)}
\newcommand{\MMm}{\mathcal{M}(c,L)}
\newcommand{\MMn}{\mathcal{M}^{\infty}(c, \partial^{\infty}V)}
\newcommand{\MMo}{\mathcal{M}(c,\gamma)}
\newcommand{\MMp}{\mathcal{M}(c,\boldsymbol{a},V)}
\newcommand{\MMq}{\mathcal{M}(c,\sigma_\tau)}
\newcommand{\MMr}{\mathcal{M}(c,\gamma)}
\newcommand{\MMs}{\mathcal{M}^{\infty}(c,\tau)}
\newcommand{\MMt}{\mathcal{M}(c,\boldsymbol{a},\sigma_\tau)}
\newcommand{\MMu}{\mathcal{M}(c,\xi_{i},\eta_{j})}
\newcommand{\MMv}{\bigcup_{k} \mathcal{M}(c,\xi_{i},\xi_{k})\times\mathcal{M}(\xi_{k},\eta_{j})}
\newcommand{\MMw}{\bigcup_{k} \mathcal{M}(\xi_{i},\eta_{k})\times \mathcal{M}(c,\eta_{k},\eta_{j})}
\newcommand{\MMx}{\mathcal{M}(c)\times_{\ev_{\partial}} \mathcal{M}(\xi_{i},\eta_{j})}
\newcommand{\MMy}{\mathcal{M}(c,\boldsymbol{a},\xi_{i},\eta_{j})}
\newcommand{\MMz}{\mathcal{M}(c,\xi_i,\eta_j,\tilde\sigma)}
\newcommand{\MMaa}{\mathcal{M}(c)\times_{\ev_{\partial}} \mathcal{M}(\xi_{i},\eta_{j})}
\newcommand{\MMac}{\mathcal{M}^{\infty}(c,\xi_i,\eta_j,\tilde\sigma)}
\newcommand{\MMad}{\mathcal{M}(c,\boldsymbol{a},\xi_i,\eta_j,\tilde\sigma)}
\newcommand{\MMae}{\mathcal{M}(c,\xi_{i},\xi_{j})}
\newcommand{\MMaf}{\mathcal{M}(c,\eta_{i},\eta_{j})}
\newcommand{\MMag}{\mathcal{M}(\xi_{i},\eta_{j})}
\newcommand{\NNa}{\mathcal{N}(c,L_K\cup \R^3)}  
\newcommand{\NNb}{\mathcal{N}(c,\boldsymbol{a},L_K\cup \R^3)}  
\newcommand{\NNc}{\mathcal{N}(c,K)}
\newcommand{\bdryc}{\Theta^\lambda(c)\in C_*^\lambda} 
\newcommand{\bdrya}{\Theta^\lambda(a)\in C_{*-1}^\lambda}
\theoremstyle{plain} \newtheorem{theorem}{Theorem}[section]            % Theorem
\theoremstyle{plain} \newtheorem{proposition}[theorem]{Proposition}       % Proposition
\theoremstyle{plain} \newtheorem{lemma}[theorem]{Lemma}               % Lemma
\theoremstyle{plain} \newtheorem{corollary}[theorem]{Corollary}              % Corollary
\theoremstyle{plain} \newtheorem{definition}[theorem]{Definition}             % Definition
\theoremstyle{plain} \newtheorem{conjecture}[theorem]{Conjecture}           % Conjecture 
\theoremstyle{remark} \newtheorem{remark}[theorem]{Remark}              % Remark
\theoremstyle{remark} \newtheorem{example}[theorem]{Example}           % Example 
\theoremstyle{remark} \newtheorem*{claim*}{Claim}
\theoremstyle{remark}             % Claim
\newlength{\oldfb}
\newcommand{\boxx}[1]%{}
{\setlength{\oldfb}{\fboxrule}\setlength{\fboxrule}{2pt}\framebox{\parbox{\dimexpr\linewidth-2\fboxsep-2\fboxrule}{#1}}\setlength{\fboxrule}{\oldfb}\\} %\textcolor{blue}{#1}}
\begin{document}

\title{Augmentations, annuli, and Alexander polynomials}
\date{\today}
\author{Lu\'is Diogo}
\address{Department of mathematics, Uppsala University, Box 480, 751 06 Uppsala, Sweden}
\email{luis.diogo@math.uu.se}
%\address{Departamento de Matem\'atica Aplicada - GMA, 
%Instituto de Matem\'atica e Estat\'istica, 
%Universidade Federal Fluminense, 
%Rua Professor Marcos Waldemar de Freitas Reis, s/n, Bloco G - Campus do Gragoat\'a, 
%S\~ao Domingos - Niter\'oi - RJ - CEP: 24.210-201, 
%Brazil}
%\email{ldiogo@id.uff.br}
\author{Tobias Ekholm}
\address{Department of mathematics, Uppsala University, Box 480, 751 06 Uppsala, Sweden and
Institut Mittag-Leffler, Aurav 17, 182 60 Djursholm, Sweden}
\email{tobias.ekholm@math.uu.se}
\subjclass{57K10; 57K14; 53D40; 53D42}

\begin{abstract}
The augmentation variety of a knot is the locus, in the 3-dimensional coefficient space of the knot contact homology dg-algebra, where the algebra admits a unital chain map to the complex numbers. We explain how to express the Alexander polynomial of a knot in terms of the augmentation variety: it is the exponential of the integral of a ratio of two partial derivatives. The expression is derived from a description of the Alexander polynomial as a count of Floer strips and holomorphic annuli, in the cotangent bundle of Euclidean 3-space, stretching between a Lagrangian with the topology of the knot complement and the zero-section, and from a description of the boundary of the moduli space of such annuli with one positive puncture.  
\end{abstract}
\maketitle
\tableofcontents

\section{Introduction}\label{Sec Intro}
Let $K\subset \R^3$ be a knot and let $\Delta_{K}(\mu)$ denote its Alexander polynomial. We study the relation between $\Delta_{K}(\mu)$ and holomorphic curve invariants of Lagrangian submanifolds of $T^{\ast}\R^{3}$ naturally associated to $K$. Very briefly, we show how the augmentation variety of a knot \cite{NgFramed} (which is related to holomorphic disk counts for the knot conormal after conifold transition by \cite{AENV}), determines the Alexander polynomial of the knot (which is related to counts of flow loops in the knot complement by \cite{HutchingsLee}, and to counts of holomorphic annuli in $T^*\R^3$, see Lemma \ref{annuli loops}). 

We first introduce the holomorphic curve invariants we need and then state our main result and discuss its ramifications.
Let $ST^{\ast}\R^3$ denote the unit co-sphere bundle of $\R^3$. The restriction of the Liouville form $p\,dq$ to $ST^{\ast}\R^3$ is a contact form, and the submanifold of unit covectors over $K$ that over each point $p\in K$ annihilate the tangent vectors to $K$ at $p$ is a Legendrian torus $\Lambda_{K}\subset ST^{\ast}\R^3$. 

We describe two Lagrangian fillings $L_{K}$ and $M_{K}$ in $T^{\ast}\R^3$ of $\Lambda_{K}$, that are naturally associated to $K$. The first, $L_{K}$, is the Lagrangian conormal of $K$ that consists of all (not necessarily unit length) co-vectors along $K$ that annihilate the vectors tangent to $K$. The conormal $L_{K}$ is diffeomorphic to a solid torus, $L_{K}\approx S^{1}\times\R^{2}$. The second, $M_{K}$, is obtained from $L_{K}$ and the zero-section $\R^3$: $L_{K}$ intersects $\R^3$ cleanly along $K$ and the Lagrangian $M_{K}$ is obtained by Lagrange surgery on this clean intersection. The Lagrangian $M_{K}$ is diffeomorphic to the knot complement $M_{K}\approx \R^3\setminus K$. 

The \emph{Chekanov--Eliashberg algebra $\mathcal{A}(\Lambda_{K})$} of $\Lambda_{K}$ is a dg-algebra freely generated by the Reeb chords of $\Lambda_{K}$ with coefficients in $\C[H_{2}(ST^{\ast}\R^3,\Lambda_{K})]$, the group algebra of the second relative homology group of $\Lambda_{K}$. For the contact form $p\,dq$, Reeb chords of $\Lambda_K$ correspond to geodesics in $\R^3$ that are binormal to $K$, and the grading of a chord equals the Morse grading of the corresponding binormal geodesic (which is $\ge 0$). %and we will assume in this article that $\Lambda_K$ has no Reeb chords of negative degree. 
Pick a basis $x, p, t$ for $H_{2}(ST^{\ast}\R^3,\Lambda_{K})$, where $x$ maps to the longitude of the knot under the connecting homomorphism, $p$ maps to the meridian, and $t$ is a generator of $H_{2}(ST^{\ast}\R^3)$. Here we use the natural identification of $\Lambda_{K}$ with the boundary of a tubular neighborhood of $K$. This choice of basis induces an identification of $\C[H_{2}(ST^{\ast}\R^3,\Lambda_{K})]$ with $\C[e^x, e^p, e^t]$.  
The differential in the dg-algebra counts rigid holomorphic disks in the symplectization $\R\times ST^{\ast}\R^3$ with boundary in $\R\times\Lambda_{K}$, with one positive and several negative punctures at Reeb chords. The homology of $\mathcal{A}(\Lambda_{K})$ is called the \emph{knot contact homology of $K$}.

An \emph{augmentation of $\mathcal{A}(\Lambda_{K})$} is a dg-algebra map $\epsilon\colon \mathcal{A}(\Lambda_{K})\to\C$. Given such a chain map, the dg-algebra differential induces a differential on the chain complex $\ker(\epsilon)/\ker(\epsilon)^{2}$, whose homology is called the $\epsilon$-linearized homology of $\mathcal{A}(\Lambda_{K})$. Denoting the collection of Reeb chords of degree 0 by $\boldsymbol{a}$, the full augmentation variety $\tilde V_K$ is the set of values $(e^{x},e^{p},e^{t},\boldsymbol{a})\in (\C^{\ast})^{3}\times \C^{|\boldsymbol{a}|}$ associated to such an augmentation. Let $\pi\colon  (\C^{\ast})^{3}\times \C^{|\boldsymbol{a}|} \to  (\C^{\ast})^{3}$ be the projection. The top-dimensional stratum $V_{K}$ of the Zariski-closure of $\pi(\tilde V_K) \subset (\C^{\ast})^{3}$ is the augmentation variety. 
%The augmentation variety has codimension at most one. 

It is well-known that for every knot $K$ the conormal Lagrangian $L_K$ is an exact Lagrangian filling of $\Lambda_K$, and that it induces a family of augmentations of $\mathcal{A}(\Lambda_{K})$ with $e^p=e^t=1$ and arbitrary $e^x\in \C^*$. Denote by $\boldsymbol{\epsilon_0}\in \tilde V_K$ the point corresponding to the augmentation in this family with $e^x = 1$. 
%there is a point $(e^{x},e^{p},e^{t},\boldsymbol{a})=(1,1,1, \boldsymbol{0}) = (\boldsymbol{1},\boldsymbol{0})$ lies in $\tilde V_{K}$ for any $K$. 
We show the following result about the full augmentation variety near this point. 

\begin{theorem}\label{t:linearizedhomology} For any knot $K$, the full augmentation variety $\tilde V_{K}$ is smooth and 2-dimensional in a neighborhood $\tilde V^{0}_{K}$ (with respect to the standard metric topology on $(\C^{\ast})^{3}\times \C^{|\boldsymbol{a}|}$)  of 
$\boldsymbol{\epsilon_0}$. Also, $d_{\boldsymbol{\epsilon_0}}\pi$ gives an isomorphism between $T_{\boldsymbol{\epsilon_0}}\tilde V_{K}^{0}$ and $\Span(\partial_x,\partial_p)$. 
For every augmentation in a neighborhood $\tilde U^{0}_{K}\subset \tilde V^{0}_{K}$ of $\boldsymbol{\epsilon_0}$, the corresponding linearized contact homology has rank one in degrees one and two, and rank zero otherwise. 
\end{theorem}

Theorem \ref{t:linearizedhomology} is proved in Section \ref{LCH VK}. It implies that we can parametrize the augmentations in $\tilde U^{0}_{K}$ by a holomorphic function of the variables $(e^x,e^p)$. The theorem also implies that the augmentation variety $V_{K} \subset (\C^*)^3$ contains a smooth 2-dimensional surface $V_K^0=\pi(\tilde V_K^0)$ through the point $(1,1,1)$, parametrized holomorphically by $(e^x,e^p)$. In particular, $V_K$ is of codimension at most one, and is cut out by a single polynomial, called the augmentation polynomial and denoted by $\Aug_K(e^x,e^p,e^t)$. 

Parametrizing augmentations in $\tilde U^{0}_{K}$ by $(e^x,e^p)$ as above, the image of the linearized contact homology differential acting on a Reeb chord of degree $k$ is a linear combination of chords of degree $k-1$, with coefficients given by holomorphic funtions of $(e^x,e^p)$. Applying Gauss elimination, we then find a family $y(e^{x},e^{p})$ of degree $1$ cycles generating the degree $1$ linearized contact homology, and depending holomorphically on $(e^x,e^p)$.  
We show in Section \ref{sec:proof of main thm} that if $F_K(e^{x},e^{p},e^{t})$ is given by the count of augmented disks with positive punctures in $y(e^{x},e^{p})$, see Proposition \ref{prp:about F}, then $dF_K(1,1,1)\ne 0$ and $d\Aug_{K}(e^{x},e^{p},e^{t})=h(e^{x},e^{p},e^{t})\, dF_K(e^{x},e^{p},e^{t})$ for some holomorphic function $h$. Our main result can then be stated as follows.

\begin{theorem} \label{T:Alex Aug}
	Let $K\subset \R^3$ be a knot, $\Aug_K(e^{x},e^{p},e^{t})$ its augmentation polynomial, $\Delta_K(e^{p})$ its Alexander polynomial, and $F_{K}(e^{x},e^{p},e^{t})$ the count of disks described above. 
	Then,
	\begin{align} \label{Alex from Aug 1}
		\Delta_K(e^{p}) &=(1-e^{p}) \exp \left.\left(\int - \frac{\partial_t F_K}{\partial_x F_K} \right|_{(x,t) = (0,0)} \, dp \right).
	\end{align}
	In particular, if $\partial_x \Aug_K |_{(x,t) = (0,0)} \neq 0$, then
	\begin{align} \label{Alex from Aug 2}
	\Delta_K(e^{p}) &=(1-e^{p}) \exp \left.\left(\int - \frac{\partial_t \Aug_K}{\partial_x \Aug_K} \right|_{(x,t) = (0,0)} \, dp \right).
	\end{align}
\end{theorem}
%Theorem \ref{Alex F general} below will be a statement similar to Theorem \ref{T:Alex Aug}, but which holds for every knot $K\subset \R^3$ (without the assumption that $\partial_x \Aug_K |_{(x,t) = (0,0)} \neq 0$). In Section \ref{sec:proof of main thm}, we will show that Theorem \ref{Alex F general} implies Theorem \ref{T:Alex Aug}. 

%{\color{blue} Some referees complained about how Thm 1.2 was written, with a statement that worked for every knot but which used a function $f$ that they thought was not properly explained. So I rewrote the statement in the less general form in terms of $\Aug$. The original version is still commented in the source file. I also made slight changes in Section 5 to make it compatible with the new version of Thm 1.2.  
%
%\
%
%}
Theorem \ref{T:Alex Aug}\footnote{See Remark \ref{NgAlexander}  for a reference to earlier work by Ng showing that one can obtain the Alexander polynomial from knot contact homology.} is proved in Section \ref{sec:main proof}. 

\begin{remark} \label{rmk:antiderivatives}
The integrals in \eqref{Alex from Aug 1} and  \eqref{Alex from Aug 2} should be interpreted as taking an anti-derivative in the variable $p$. More explicitly, they are functions given by 
$$
\int_0^p - \frac{(\partial_t F_K)(1,e^{p'},1)}{(\partial_x F_K)(1,e^{p'},1)}\, dp' + C,\quad\text{ and }\quad
\int_0^p - \frac{(\partial_t \Aug_K)(1,e^{p'},1)}{(\partial_x \Aug_K)(1,e^{p'},1)}\, dp' + C,
$$
respectively, for a suitable constant $C$. One can impose a normalization of the Alexander polynomial (for example $|\Delta_K(1)|=1$) by a suitable choice of $C$. (The indeterminate constant $C$ has its origin in the proof of Theorem \ref{Alex F general}, where an annulus counting function $\An_{M_K}(e^p)$ is expressed as an indefinite integral $\int \partial_p \An_{M_K}(e^p) dp$.)
\end{remark}

The proof of Theorem \ref{T:Alex Aug} uses several cobordisms of moduli spaces of holomorphic curves. The starting point for the argument is Theorem \ref{T:Alex zeta tau} below. 
It interprets a dynamical formula for the Alexander polynomial of a knot $K\subset \R^3$, consisting of counts of gradient flow lines and loops in the knot complement, in terms of holomorphic annuli and strips in $T^*\R^3$. See \cite{MilnorDuality,FriedHomological,HutchingsLee} for the dynamical formula, and Section \ref{ssec:MorseAlex} for an alternative Morse-theoretic proof. Let $\Md$ denote a shift of the exact Lagrangian knot complement along a generic closed 1-form representing a generator of the first cohomology $H^{1}(M_{K})$. The following result is proved in Section \ref{ssec:annuli} using flow tree techniques from \cite{EkholmFlowTrees}.
\begin{theorem} \label{T:Alex zeta tau}
Let $K\subset \R^3$ be a knot. For any almost complex structure on $T^{\ast}\R^{3}$ agreeing with the standard almost complex structure along the zero-section $\R^{3}$ and for all sufficiently small shifts of $M_{K}$ to $M_{K}^{\delta}$, the Alexander polynomial of $K$ can be written as
\begin{equation} \label{Alex zeta tau}
\Delta_K(e^{p}) = (1-e^{p}) \zeta_{\rm{an}}(e^p) \cdot \tau_{\rm{str}}(e^p), 
\end{equation}
where $\zeta_{\rm{an}}(e^p)$ is the exponential of a generating function of holomorphic annuli in $T^*\R^3$ stretching from $\R^3$ to $\Md$, and $\tau_{\rm{str}}(e^p)$ counts holomorphic strips between $\R^3$ and $\Md$.
\end{theorem}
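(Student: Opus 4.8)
The plan is to convert the holomorphic-curve counts into gradient dynamics via the flow-tree correspondence of \cite{EkholmFlowTrees}, and then to invoke the classical description of the Alexander polynomial of the knot complement as a product of a Morse--Novikov torsion and a dynamical zeta function --- recorded in \cite{MilnorDuality,FriedHomological,HutchingsLee} and reproved from scratch in Section \ref{ssec:MorseAlex}. Concretely, fix a generic closed $1$-form $\alpha$ on $M_K\approx\R^3\setminus K$ representing a generator of $H^1(M_K;\Z)\cong\Z$, together with a generic metric, so that the gradient vector field is Morse--Smale with nondegenerate closed orbits; near the conical end of $M_K$ over $\LK$ we arrange that $\alpha$ restricts to the meridian angle and has no zeros there. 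Let $f\colon M_K\to S^1$ be a circle-valued Morse function with $df=\alpha$. The dynamical formula states that the Milnor--Reidemeister torsion of $M_K$ in the meridian local system $\mu\mapsto e^p$ factors as a Morse--Novikov torsion $\tau_{\mathrm{MN}}(e^p)$, assembled from signed counts of gradient flow lines of $f$ between critical points, times the Lefschetz zeta function $\zeta(e^p)=\exp\!\big(\sum_{\gamma}\tfrac{\epsilon(\gamma)}{m(\gamma)}\,e^{p\,d(\gamma)}\big)$ of closed orbits $\gamma$ (with $f$-degree $d(\gamma)$, covering multiplicity $m(\gamma)$, and sign $\epsilon(\gamma)$); by Milnor's theorem this torsion equals $\Delta_K(e^p)/(1-e^p)$, with $1-e^p$ the usual boundary-torus normalization.

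Now realize $\Md$ as a graph-type shift of $M_K\subset T^{\ast}\R^3$ along $\delta\alpha$ for a small parameter $\delta>0$, so that $\R^3\cap\Md$ is exactly the zero set of $\alpha$, i.e. the critical points of $f$, with no intersections near the end. For the chosen almost complex structure --- which agrees along $\R^3$ with the standard one, so that the adiabatic limit $\delta\to0$ is governed by the metric gradient flow --- apply the flow-tree technique of \cite{EkholmFlowTrees} to the pair $(\R^3,\Md)$: the rigid holomorphic strips with one boundary on $\R^3$, one boundary on $\Md$, and corners at these intersection points converge to, and are in oriented bijection with, the rigid gradient flow lines of $f$ between critical points of adjacent index, the relative homology class recording the $f$-degree and hence the power of $e^p$. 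Consequently the strip-torsion $\tau_{\mathrm{str}}(e^p)$ --- the torsion of the acyclic complex on the intersection points whose differential counts these strips --- agrees, up to a unit sign fixable by orientations, with $\tau_{\mathrm{MN}}(e^p)$.

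The essential new input is the analogous statement for curves of Euler characteristic zero: a rigid holomorphic annulus with one boundary circle on $\R^3$ and one on $\Md$ degenerates, as $\delta\to0$, onto a closed orbit of the gradient flow of $f$, possibly multiply covered --- its conformal modulus is pinned by the orbit's Poincar\'e return map and its two boundary circles collapse onto the orbit, whose $f$-degree gives the power of $e^p$. Organizing these contributions by underlying simple orbit and covering number reconstructs $\zeta(e^p)$ in the exponentiated form above, so $\zeta_{\mathrm{an}}(e^p)=\zeta(e^p)$. Combining the three identities yields $\Delta_K(e^p)=(1-e^p)\,\zeta_{\mathrm{an}}(e^p)\,\tau_{\mathrm{str}}(e^p)$; that each factor is independent of the auxiliary choices, for all sufficiently small $\delta$, follows from the standard continuation and cobordism arguments for flow trees and for Reidemeister torsion.

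The main obstacle is the annulus count. The flow-tree formalism of \cite{EkholmFlowTrees} is designed for disks, so one must control the extra conformal modulus of the annulus, establish transversality for holomorphic annuli lying over closed orbits \emph{including their multiple covers} --- where the covering automorphisms account for the $1/m(\gamma)$ weights and where the signs $\epsilon(\gamma)$ must be computed --- and exclude the degenerations (higher genus, boundary bubbling, breaking at a critical point, or escape to the non-compact end over $\LK$) that would destroy rigidity or introduce spurious terms. Matching the three pieces so that $1-e^p$ appears with the correct sign and no leftover units is the remaining bookkeeping.
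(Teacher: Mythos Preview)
Your proposal is correct and follows essentially the same route as the paper: the paper's proof is literally the one line ``combine equation \eqref{alex} with Lemma \ref{annuli loops}'', i.e.\ reduce to the dynamical formula $\Delta_K=(1-\mu)\zeta_{\rm loop}\tau_{\rm Morse}$ and then use the flow-tree correspondence to identify strips with flow lines and annuli with flow loops. Your identification of the annulus case as the main technical point is also accurate; the paper handles it in Lemma \ref{annuli loops} by first deforming the metric and Lagrangian to be flat near each rigid flow loop so that an explicit holomorphic annulus exists over it with surjective linearized operator, rather than by analyzing the Poincar\'e return map directly.
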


The right-hand side of \eqref{Alex from Aug 2} in Theorem \ref{T:Alex Aug} makes sense also at points in the augmentation variety $V_K$, where $(x,t)\ne (0,0)$. This gives a $Q$-deformation of the Alexander polynomial ($Q=e^{t}$), which connects our work to topological string and physics invariants in low-dimensional topology. More precisely, this observation is the starting point for a geometric treatment and $Q$-deformation (there called $a$-deformation emphasizing the role of $Q=a^{2}$ in colored HOMFLY polynomials) of the so called $\widehat{Z}$-invariant of 3-manifolds, originating from a 3d-3d correspondence (see \cite{GPV,GPPV}) for knot complements, as described in \cite[Section 4]{EGGKPS}. We elaborate on this in Section \ref{sec:SFT-stretch}. 
%(which is the only part of this paper that refers to abstract perturbations for holomorphic curves). 

For fibered knots, we prove in Proposition \ref{prp:limitfibered} that the quotient by $1-e^p$ of this 
$Q$-deformation of the Alexander polynomial, where $Q=e^{t}$, can be written as $\tau_K = \exp\left(\partial_{t} U_{K}^{0}(p,t)\right)$, where $U_{K}^{0}$ is a certain Gromov--Witten disk potential for the knot complement Lagrangian $M_{K}$. In Conjecture \ref{conj1}, we propose that a generalization of Proposition \ref{prp:limitfibered} holds also for non-fibered knots.  
\begin{remark}
The logarithm of $\tau_K$ can be interpreted as the dual coordinate $s=s(x,t)$ or $s=s(p,t)$ of the deformation coordinate $t$ in an extended (holomorphic) Lagrangian augmentation variety in $(\C^{\ast})^{4}$. This extended augmentation variety is Lagrangian with respect to the symplectic form $dx\wedge dp + dt\wedge ds$, and given by the generating function $U_K^{0}(p,t)$.	
\end{remark}

The paper is organized as follows. In Section \ref{sec alex+grad}, we recall a formula for the Alexander polynomial of a knot $K\subset S^3$ in terms of gradient flow loops and gradient flow lines in the knot complement, and give a Morse-theoretic proof of that formula. In Section \ref{sec:flow lines and holo curves}, we study the Lagrangians $L_K$ and $M_K$ and their non-exact deformations, analyze holomorphic disks and annuli with boundaries in these Lagrangians and in $\R^3$, and prove Theorem \ref{T:Alex zeta tau}. In Section \ref{choices}, we discuss coefficients in knot contact homology and the augmentation variety of a knot. In Section \ref{sec:main proof}, we study several cobordisms of moduli spaces of holomorphic curves. In Section \ref{sec:LCH computations}, we compute linearized knot contact homology groups and use the results of the previous section to prove Theorems \ref{t:linearizedhomology} and Theorem \ref{T:Alex Aug}. We also relate the dependence of choices in that result with well-known properties of the Alexander polynomial. In Section \ref{sec Ex}, we present some examples to illustrate our results. In Section \ref{sec:SFT-stretch}, we discuss how Theorem \ref{T:Alex Aug} relates to physical invariants from the point of view of Gromov--Witten disk potentials and Floer homology torsion. As was mentioned, Section \ref{sec:SFT-stretch} is the only part of the paper that relies on abstract perturbations for holomorphic curves. 

\subsection*{Acknowledgements} 
LD thanks Paul Seidel for a suggestion that led to the start of this project. We thank Lenny Ng for helpful comments. LD was supported by the Knut and Alice Wallenberg Foundation. TE was supported by the Knut and Alice Wallenberg Foundation KAW 2020.0307 and the Swedish Research Council VR 2020--04535. 
The authors thank anonymous referees for useful comments and suggestions.

\section{The Alexander polynomial and gradient flows}\label{sec alex+grad} 
In this section, we discuss how to express the Alexander polynomial in terms of counts of gradient flow lines and gradient flow loops of $S^{1}$-valued Morse functions. The results are well-known.
% \cite{MilnorDuality,FriedHomological,HutchingsLee}. 
We state them in Section \ref{ssec:dynAlex} and give a direct Morse-theoretic derivation in Section \ref{ssec:MorseAlex}. 

\begin{remark}[About the ambiguity of the Alexander polynomial] \label{rmk: Alexander ambiguity}
Let $K \subset S^3$ be a knot. The Alexander polynomial $\Delta_K(\mu) \in \Z[\mu^{\pm}]$ is well-defined up to multiplication by a unit $\pm \mu^k$, for some $k\in \Z$, and so is its dynamical formulation in \eqref{alex} below (see for instance \cite[Example 1.4 and Theorem 1.7]{HutchingsLee}). Therefore, the formulas for $\Delta_K(\mu)$ in Theorems \ref{T:Alex Aug} and \ref{T:Alex zeta tau} make sense only up to the same ambiguity. In particular, in Theorem \ref{T:Alex Aug} the antiderivatives are specified up to additive constants of the form $k \pi i$, for $k\in \Z$. In Section \ref{sec: independence of choices} we explain how the choices involved in the definition of knot contact homology (for instance, of capping disks for the longitude and meridian of $K$) are compatible with the ambiguity by a factor $\pm\mu^k$ in the definition of $\Delta_K$.

To simplify the exposition, we will use different coefficients in this section. In Subsection \ref{ssec:dynAlex}, we use formal Laurent series $\Q((\mu))$ (which is a field), and in Subsection \ref{ssec:MorseAlex} we use the Laurent polynomial ring $\Q[\mu^{\pm}]$ (which is a principal ideal domain). 
%Nevertheless, we reiterate that formula \eqref{alex} below is correct up to an overall factor $\pm\mu^k$. 
%
%Nevertheless, it is important to keep in mind that $\Delta_K$ is defined over $\Z[\mu^{\pm}]$ (which is a consequence of Alexander's theorem that the Alexander ideal of every knot is a principal ideal \cite[Theorem 8.C.4]{Rolfsen}), and that $\Delta_K(1) = \pm 1$. This last point will be overlooked in this section, since we will be using $\Q$ instead of $\Z$. 
\end{remark}

\subsection{Dynamical definition}\label{ssec:dynAlex}
%\subsection{Dynamical definition of the Alexander polynomial}\label{ssec:dynAlex}
Let $K\subset S^3$ be a knot and $\nu(K)$ be a small tubular neighborhood of $K$. Choose coordinates $(r,\phi,\theta)\in (0,\epsilon)\times S^1\times S^1$ for $\nu(K)\setminus K$ such that curves of constant $(r,\theta)$ are meridians, curves of constant $(r,\phi)$ are longitudes and $K$ is the limit as $r\to 0$ (and $K$ can be thought of as being parametrized by $\theta$). Let $Y_{K}=S^{3}\setminus K$ and $\bar Y_{K}=S^3\setminus\nu(K)$. Then $H_{1}(Y_{K};\Z)\cong\Z$. In what follows, identify $S^1$ with $\mathbb R/\mathbb Z$. 

%{\color{blue} I changed the definition and the lemma below, to have $g$ defined on $S^3$ and $f$ defined on $S^3\setminus K$. the previous version, where they were only defined on $\bar Y_K$, is commented below in the tex file}

\begin{definition} \label{def: admissible}
Consider a pair $(f,g)$, where $f\colon S^3\setminus K\to S^1$ is a smooth function and $g$ is a Riemannian metric on $S^3$. Let $\nabla_g f$ be the $g$-gradient vector field of $f$ on $S^3\setminus K$. Call the pair $(f,g)$ {\em admissible} if the following holds:
\begin{enumerate}
\item $f(r,\phi,\theta) = \phi - C r^2$, for some $C>0$, in $\nu(K)\setminus K$; \label{def: f near boundary}
\item $f_*: H_1(S^3\setminus K;\Z) \to H_1(S^1;\Z)$ is an isomorphism; \label{def:H_*}
\item $f$ is Morse function, in the sense that all its critical points are of Morse-type; \label{def:Morse}
\item $f$ has no critical points of indices 0 and 3; as a consequence, for every $\theta_0\in S^1$, the preimage $f^{-1}(\theta_0)$ is connected; \label{def: no 0 or 3}
%\item for every $\theta_0\in S^1$, the preimage $f^{-1}(\theta_0)$ is connected; \label{def: F connected}
%\item if $\theta_0$ is a regular value of $f$, then $f^{-1}(\theta_0)$ is connected; \label{def: F connected}
\item $\nabla_g f$ is everywhere transverse to the boundary $\partial \bar Y_K = \partial \nu(K)$, and points out of $\bar Y_{K}$ along $\partial \bar Y_K$; \label{def: point out}
\item for the vector field $-\nabla_g f$, the stable and unstable manifolds for every pair of critical points of $f$ intersect transversely, and every gradient flow loop is non-degenerate. \label{def:hyperbolic}
\end{enumerate}
\end{definition}

\begin{remark}
If $g$ has the standard form $dr^2 + r^2 d\phi^2 + d\theta^2$ near $\partial \bar Y_K$, then \eqref{def: f near boundary} implies \eqref{def: point out} (note that $r$ decreases as one approaches $\partial \bar Y_K$ within $\bar Y_K$). Also, \eqref{def: f near boundary} implies that $f$ naturally defines a function on the zero-surgery of $S^3$ along $K$. 
\end{remark}

%Identify $S^1$ with $\mathbb R/\mathbb Z$ and consider an $S^1$-valued Morse function  such that $f_{\ast}(p)=[S^{1}]$, for some generator $p$ of $H_{1}(Y_{K})$. 
%Let $g$ be a metric on $\bar Y_{K}$. Let $\nabla_g f$ denote the $g$-gradient vector field of $f$ and  $\varphi^T$ its time-$T$ flow. Assume that $\nabla_g f$ is everywhere transverse to the boundary $\partial \nu(K)$, and pointing outward at the boundary. 

We next produce an admissible pair $(f,g)$. 
Conditions \eqref{def: f near boundary}, \eqref{def:H_*}, \eqref{def:Morse} and \eqref{def: point out} in Definition \ref{def: admissible} are easily arranged. By the Kupka--Smale theorem, see e.g.~\cite[Theorem 3.2.6]{MeloPalis}, condition \eqref{def:hyperbolic} is satisfied for generic $(f,g)$. The fact that stable and unstable manifolds intersect transversely implies in particular that gradient flow lines between critical points of consecutive indices are isolated. The fact that gradient flow loops are non-degenerate (they are called {\em hyperbolic} in \cite{MeloPalis}) means the following for every flow loop $\gamma$, if $T>0$ is the period of $\gamma$, then $1-(d\varphi^T)_x \colon T_x S^3 / T_x\gamma \to T_x S^3 / T_x\gamma$ has no eigenvalue of modulus 1 (and in particular is invertible). One also gets that, for every $T_0>0$, $\gamma$ has a neighborhood such that the only flow loops of period larger than $T_0$ that are contained in this neighborhood are multiples of $\gamma$ ~\cite[Lemma 3.2.2]{MeloPalis}. 
%
%The second condition means that for any closed flow loop $\gamma$ of period $T>0$, the linearized Poincar\'e return map $(d\varphi^T)_x$, $x\in\gamma$, does not have any eigenvalue of modulus 1. Therefore $1-(d\varphi^T)_x \colon T_x \bar Y_K / T_x\gamma \to T_x \bar Y_K / T_x\gamma$ is invertible. One also gets that $\gamma$ has a neighborhood that does not contain any other flow loops (apart from multiples of $\gamma$). 

This leaves \eqref{def: no 0 or 3} which can be achieved by an argument used when canceling $0$ and $1$ handles. More precisley we have the following result. 
\begin{lemma} \label{l:critical points of deg 1 and 2}
Suppose that the pair $(f,g)$ satisfies \eqref{def: f near boundary}, \eqref{def:H_*}, \eqref{def:Morse}, \eqref{def: point out} and \ref{def:hyperbolic} in Definition \ref{def: admissible}. Then, there exists a function $\hat{f}\colon S^3\setminus K\to S^1$, obtained from $f$ by canceling $1$-handles (and $2$-handles) also satisfies \eqref{def: no 0 or 3}, and the pair $(\hat{f},g)$ is admissible. 
\end{lemma}

\begin{proof}
We modify $f$ in the complement of $\nu(K)$.
Let $\theta_0\in S^1$ be a regular value of $f$. Add two copies of $F = f^{-1}(\theta_0)$ to $(S^3\setminus K)\setminus F$ to produce a cobordism $W$ from $F$ to $F$. Then, $f$ induces a Morse function $f_W: W \to [\theta_0,\theta_0 + 1]$ with finitely many critical points. Reorder the critical points of $f_W$ so that all the critical values are distinct, and $\theta_0 + i/4 < f_W(x) < \theta_0 + (i+1)/4$ if $x$ is a critical point of Morse index $i\in \{0,1,2,3\}$. By condition \eqref{def: f near boundary}, this reordering can be done without changing $f_W$ in the region of $W$ that projects to $\nu(K)$

Let $x$ be a critical point of index 1 with descending flow lines connecting different connected components of $f_W^{-1}(\theta_0+1/4)$. Note that each such connected component corresponds to either a connected component of $f_W^{-1}(\theta_0)$ or a critical point of index 0 of $f_W$. We can modify $f_W$ by decreasing the value $f_W(x)$, and eventually either cancel $x$ with a critical point of index 0 or merge two connected components of $f_W^{-1}(\theta_0)$. Afterwards, reorder again the critical points as in the previous paragraph and repeat the same process with all critical points of index 1 with descending flow lines connecting different connected components of $f_W^{-1}(\theta_0+1/4)$. Then, we apply an analogous process to all the critical points of index 2 with ascending flow lines connecting different connected components of $f_W^{-1}(\theta_0+3/4)$. Since $S^3\setminus K$ is connected, property \eqref{def:H_*} implies that by the end of this process the function $f_W\colon W \to [\theta_0,\theta_0 + 1]$ gives a function $\hat{f}\colon S^3\setminus K\to S^1$ with property \eqref{def: no 0 or 3}. Observe again that all the function modifications can be done away from $\nu(K)$. 
\end{proof}

From this point on, let $(f,g)$ be an admissible pair (whose existence is guaranteed by Lemma \ref{l:critical points of deg 1 and 2}).
Let $\mathcal O$ denote the discrete set of flow loops of $\nabla_g f$ (for which $\dot\gamma = - (\nabla_g f)(\gamma)$). Consider the zeta function 
\begin{equation} \label{zeta_loops}
\zeta_{\rm loop}(\mu) = \exp \left(\sum_{\gamma \in \mathcal O} \frac{\sigma(\gamma)}{m(\gamma)} \mu^{d(\gamma)}\right)\in \Q[[\mu]].
\end{equation}
Here, if $\gamma$ has period $T$ and $x\in \gamma$, then $\sigma(\gamma)\in \{\pm 1\}$ is the sign of the determinant of the linear map $1-(d\varphi^T)_x \colon T_x S^3 / T_x\gamma \to T_x S^3 / T_x\gamma$, $m(\gamma)$ is the largest integer such that $\gamma$ factors through an $m$-fold cover $S^1 \to S^1$, and $[\gamma] = d(\gamma) p$ on $H_1(S^3\setminus K)$.

The function $f$ has an associated Novikov complex, that can be defined as follows. 
Consider the maximal Abelian cover $\widetilde Y_{K}$ of $\bar Y_{K}$, with projection map $\pi \colon \widetilde Y_{K} \to \bar Y_{K}$. Denote by $\mu$ the positive generator of the deck transformation group $\Z$.    
A lift $\tilde f\colon \widetilde Y_{K}\to\R$ of $f$ is a Morse function. Consider the $\Q$-vector space that is the (infinite) direct product of copies of $\Q$ indexed by the critical points of $\tilde f$. This vector space can be graded by the Morse index. Elements of the direct product can be thought of as infinite sums of critical points and we let
$C_*(\tilde f;\Q)$ 
be a subspace of half-infinite elements. 
More precisely,
$$
\sum_{\begin{smallmatrix}x\in \Crit(\tilde f),\\ a_x\in \Q\end{smallmatrix}} \!\! a_x \cdot x \ \in  \ C_*(\tilde f;\Q)
$$
if, for any $x$ such that $a_x\neq 0$, 
$$
\#\{ k>0  \, | \, a_{\mu^{-k} \cdot x} \neq 0 \} <\infty.
$$

The action of $\Z$ on $\widetilde Y_{K}$ by deck transformations endows $C_*(\tilde f;\Q)$ with a free action of the ring of formal Laurent series $\Q((\mu))$, consisting of sums with finitely many negative powers of $\mu$ and possibly infinitely many positive powers. Here, $\mu$ is to be thought of as the exponential of the positive generator of $\Z$ 
(or, equivalently, of the generator $p\in H_1(Y_K)$ above). Since $\Q$ is a field, $\Q((\mu))$ is also a field, isomorphic to the field of fractions of the formal power series ring $\Q[[\mu]]$. 
The complex $C_*(\tilde f;\Q)$ is a finite dimensional vector space over $\Q((\mu))$, and a basis can be obtained by fixing, for every $x\in \Crit(f)$, a lift $\tilde x\in \Crit(\tilde f)$ such that $\pi(\tilde x) = x$. Recall that we assume that all these critical points have index 1 or 2. 
The {\em Morse torsion} is defined as 
\begin{equation} \label{tau flow lines}
\tau_{\rm Morse}(\mu):=\det M(\mu),
\end{equation} 
where $M(\mu)$ is the matrix representing the Morse differential of $\tilde f$ in the chosen basis:
$$
\begin{CD}
\bigoplus_{x \in \Crit_2(f)} \Q((\mu)) \langle \tilde x \rangle @>{d_2}>> \bigoplus_{y \in \Crit_1(f)} \Q((\mu)) \langle \tilde y \rangle.
\end{CD}
$$
If $f$ has no critical points, then we say that $\tau_{\rm Morse}$ = 1. 

Results of \cite{MilnorDuality,FriedHomological,HutchingsLee} imply that the Alexander polynomial of $K$ can be written as the product  
\begin{equation}\label{alex}
\Delta_K(\mu)= (1-\mu) \, \zeta_{\rm loop}(\mu) \cdot \tau_{\rm Morse}(\mu).
\end{equation}
Note that a different choice of lifts $\tilde x$ would result in multiplying \eqref{tau flow lines} and \eqref{alex} by a power of $\mu$, which is compatible with the fact that the Alexander polynomial is defined up to multiplication by such powers.
In Section \ref{ssec:MorseAlex}, we give a Morse-theoretic derivation of formula \eqref{alex}.

\begin{remark} \label{rmk:fibered K}
If $K$ is a fibered knot, we can think of the Morse function $f$ as coming from the fibration $\Sigma \to S^3\setminus K \to S^1$, which endows the knot complement with an open book decomposition with monodromy given by a map $\phi\colon \Sigma \to \Sigma$. We can arrange for $\phi$ to be a symplectomorphism with respect to an area form on $\Sigma$. In that case, $\mathcal O$ is the collection of generators of the fixed point Floer homology of $\phi$ (see \cite{SpanoCategorification}). 

For $K$ fibered, the function $\zeta_{\rm loop}$ agrees with a certain Gromov--Taubes invariant of $S^1\times S_0(K)$, where $S_0(K)$ is 0-surgery on $K\subset S^3$. This invariant counts holomorphic tori in a specified homology class in $H_2(S^1\times S_0(K))$. For more details, see \cite[Lemma 2.11]{HutchingsNotes} and \cite{SpanoGromov}, where one can find explicit formulas for the contribution of a simple periodic orbit $\gamma$ and its covers to this Gromov--Taubes invariant, depending on whether $\gamma$ is elliptic or (positive or negative) hyperbolic. See Remark \ref{simple annuli} for a related point concerning holomorphic annuli. For a general knot $K$, see \cite{MarkTorsion} for an identification of the product $\zeta_{\rm loop} \cdot \tau_{\rm Morse}$ with a Seiberg--Witten invariant of $S_0(K)$.  
\end{remark}

\subsection{Morse theory calculation}\label{ssec:MorseAlex}
%\subsection{The Alexander polynomial via Morse theory}\label{ssec:MorseAlex}
In this section, we present a Morse theory proof of Equation \eqref{alex} for admissible $(f,g)$. The formula itself holds under more general conditions, since its right side is invariant under a large class of (generic) deformations, see \cite{HutchingsReidemeister}. We point out that the rest of the paper is formally independent of this section, which is included to emphasize the direct connection between flow loops/lines and holomorphic annuli/disks.  
We use the notation as in Section \ref{ssec:dynAlex}. 

The idea of the argument is to break all flow loops and flow lines of $\nabla_g f$ in the previous section into sequences of short flow lines, by suitably perturbing the function $f\colon \bar Y_K \to S^1$. Lifting the perturbed function to $\widetilde Y_{K}\to\R$ (and adding further small perturbations, as explained below) gives a Morse chain model for the homology of the Abelian cover $\widetilde Y_K$, as a $\Q[\mu^{\pm}]$-module. 
We point out that, if we did not begin by perturbing $f$ to break flow loops, then the Morse complex of the lifted function would in general not compute the homology of the Abelian cover $\widetilde Y_K$, consider e.g., the identity map on $S^{1}$.
In this setting, the Alexander polynomial can be thought of as the degree 1 contribution to the Reidemeister torsion of $H_{\ast}(\widetilde Y_{K};\Q)$.

To be more specific, recall that $\widetilde Y_{K}$ is a maximal Abelian cover of $\bar Y_{K}$, which endows $H_{\ast}(\widetilde Y_{K};\Q)$ with the structure of a finitely generated $\Q[\mu^{\pm}]$-module. We will be interested in the degree $1$ part. Since $\Q$ is a field, $\Q[\mu^{\pm}]$ is a principal ideal domain. The module $H_{1}(\widetilde Y_{K};\Q)$ is torsion and the Alexander polynomial $\Delta_{K}(\mu)\in \Q[\mu]$ generates the corresponding order ideal in $\Q[\mu^{\pm}]$. In other words,
\begin{equation} \label{define Delta}
H_{1}(\widetilde Y_{K};\Q)\cong \Q[\mu^{\pm}]/(\Delta_{K}(\mu)).
\end{equation}

%This means that
%\begin{equation} \label{define Delta}
%H_{1}(\widetilde Y_{K};\Q)\cong \Q[\mu^{\pm}]/(P_1(\mu)) \oplus \ldots \oplus \Q[\mu^{\pm}]/(P_m(\mu)) \qquad \text{and} \qquad \Delta_{K}(\mu) = P_1(\mu) \ldots P_m(\mu),
%\end{equation}
%where the $P_{j}(\mu)$ are irreducible polynomials.

Recall that $f\colon \bar Y_{K}\to S^{1}$ is an $S^{1}$-valued Morse function 
with critical points of indices $1$ and $2$ only, and that $\widetilde f\colon \widetilde Y_{K}\to \R$ is a lift of $f$. Let $\theta_0$ be a regular value of $f$ and recall that $F=f^{-1}(\theta_0)$ is connected. 
Let $\phi\colon \bar Y_{K}\to S^{1}$ be a Bott--Morse perturbation of $f$ with a canceling pair of Bott manifolds (maxima 
and minima) on nearby preimages $F_\pm = f^{-1}(\theta_0\pm \epsilon)$ for small $\epsilon>0$. Let $\widetilde\phi\colon \widetilde Y_K\to \R$ denote the lift of this function. 

Think of $F$ as a two-dimensional handlebody and pick a Morse function $h_F \colon F\to \mathbb R$ such that $h_F$ has a single critical point $m$ in degree 0, and such that every index 1 critical point of $h_{F}$ has a pair of canceling gradient flow lines connecting it to $m$. Let $C_{\ast}(h_F)$ be the Morse complex of $h_F$ (with trivial differential by our assumptions), let $\widecheck{C}_{\ast}(F) = C_{\ast}(h_F) \otimes_{\Q} \Q[\mu^\pm]$, and let $\widehat{C}_{\ast}(F)$ be a copy of $\widecheck{C}_{\ast}(F)$ with the degrees of all the elements shifted up by 1. We can use the function $h_F$ to morsify $\widetilde\phi$, by which we mean perturbing $\widetilde\phi$ near the lifts of $F_\pm$ to get a Morse function $\widehat\phi\colon\widetilde Y_{K}\to\R$. The Morse complex of $\widehat \phi$ is:
\begin{equation} \label{eq: chain complex} 
C_{\ast}(\widetilde Y_{K};\mu) \ = \ \widecheck{C}_{\ast}(F)\oplus \widehat{C}_{\ast}(F)\oplus C_{1}(Y_{K})\oplus C_{2}(Y_{K}),
\end{equation}
where all summands on the right are finitely generated free $\Q[\mu^{\pm}]$-modules, and where $C_j(Y_K)$, $j=1, 2$ are generated by the critical points of $\widetilde f$ of degree $j$. See Figure \ref{graph_f_fig} for a schematic depiction. 
\begin{figure}%{r}{0.35\textwidth}
  \begin{center}
    \def\svgwidth{.9\textwidth}
    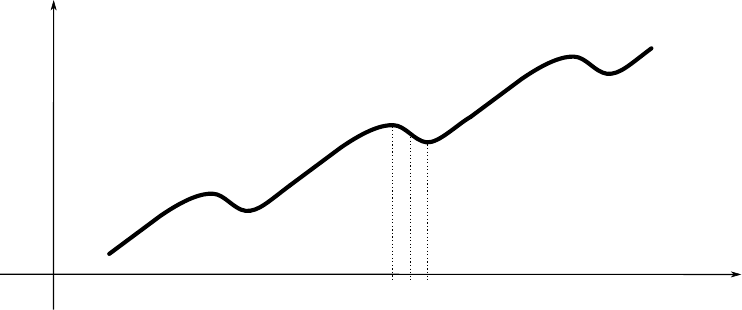
  \end{center}
  \caption{Schematic picture of the graph of $\widehat\phi$. The arrows indicate where the summands of $C_{\ast}(\widetilde Y_{K};\mu)$ are supported (when restricted to a fundamental domain for the action of $\Z$ on $\widetilde Y_K$ by deck transformations).}
  \label{graph_f_fig}
\end{figure} 
The differential in the complex $C_{\ast}(\widetilde Y_{K};\mu)$ is $\Q[\mu^{\pm}]$-linear and can be written as 
$$
\begin{CD} 
\widehat C_{1}(F) \oplus C_2(Y_K) @>\partial_2>> \widecheck C_{1}(F) \oplus \widehat C_{0}(F) \oplus C_1(Y_K) @>\partial_1>> \widecheck C_0(F)
\end{CD}
$$
where $\partial_1|_{\widecheck C_{1}(F) \oplus C_1(Y_K)} \equiv 0$ and $\partial_1|_{\widehat C_{0}(F)} \colon \widehat C_{0}(F) \to \widecheck C_{0}(F)$ is the map $1-\mu$. Hence, $H_{0}(\widetilde{Y}_{K};\mu) \cong \Q[\mu^{\pm}]/(1-\mu) \cong \Q$.
Furthermore, no element in the image of $\partial_2$ has a non-zero contribution in the summand $\widehat C_{0}(F)$, and $H_{1}(\widetilde{Y}_{K},\mu)$ is the cokernel of the $\mu$-module homomorphism
\begin{equation}\label{eq:presentation}
D = \partial_2 \colon \widehat{C}_{1}(F)\oplus C_{2}(Y_{K})\to \widecheck{C}_{1}(F)\oplus C_{1}(Y_{K}).
\end{equation}

%{\color{blue} Why is $\partial_1|_{ C_1(Y_K)} \equiv 0$? Before the perturbation, the unstable manifolds of index 1 critical points should asymptote to flow loops. When perturb the flow loops, shouldn't those unstable manifolds asymptote to critical points of index 0 in $\widecheck C_{0}(F)$?}
% reason: from each such critical point emanate two canceling flow lines asymptoting to the same local minimum

The proof that the Morse homology of a smooth manifold is isomorphic to the homology of the manifold (see for instance \cite[Section 2.2]{HutchingsLee} for an isomorphism with singular homology or \cite[Section 4.9]{AudinDamian} for an isomorphism with cellular homology) can be adapted to show that the homology $H_*(\widetilde Y_K;\mu)$ is isomorphic to $H_{K}(\widetilde Y_{K};\Q)$ as $\Q[\mu^{\pm}]$-modules. By \eqref{define Delta}, we can use $H_1(\widetilde Y_K;\mu)$ to compute the Alexander polynomial.

\begin{lemma} \label{Delta det A}
	Pick bases for the left- and right-hand sides of \eqref{eq:presentation} as $\Q[\mu^{\pm}]$-modules, and let $A(\mu)$ be the corresponding matrix representation of $D$. Then $\Delta_{K}(\mu)=\det(A(\mu))$. 
%	Pick bases for the left and right hand sides of \eqref{eq:presentation}, and let $A(\mu)$ be the corresponding matrix representation of $D$. Then $\Delta_{K}(\mu)=\det(A(\mu))$. 
\end{lemma}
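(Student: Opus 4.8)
The plan is to reduce the lemma to a standard fact about Reidemeister torsion and presentation matrices of torsion modules over the PID $\Q[\mu^{\pm}]$. By the discussion preceding the lemma, $H_1(\widetilde Y_K;\mu)$ is the cokernel of the $\Q[\mu^{\pm}]$-module homomorphism $D$ in \eqref{eq:presentation}, and by \eqref{define Delta} the Alexander polynomial is the generator of the order ideal of $H_1(\widetilde Y_K;\Q)\cong H_1(\widetilde Y_K;\mu)$, i.e.\ the product of the elementary divisors of any presentation of this module with equally many generators and relations. So the first step is to observe that, once we pick bases, $A(\mu)$ is exactly such a square presentation matrix: the source and target of $D$ both have the same rank over $\Q[\mu^{\pm}]$, namely $|\Crit_1(f)| + (\text{rank of the handlebody 1-chains of }F)$ on one side and the matching count on the other. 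One must check that these ranks agree; this follows because $H_0(\widetilde Y_K;\mu)\cong\Q$ has rank $0$ over $\Q[\mu^{\pm}]$ and the Euler characteristic of the (acyclic after tensoring with the fraction field) complex forces the alternating sum of ranks to vanish, so the two middle terms have equal rank after accounting for the $1-\mu$ map out of $\widehat C_0(F)$, which is injective.

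Next I would invoke the structure theorem: for a square matrix $A(\mu)$ over the PID $\Q[\mu^{\pm}]$ presenting a torsion module $M$, one has $M\cong\bigoplus_j \Q[\mu^{\pm}]/(d_j)$ where the $d_j$ are the invariant factors, and $\det A(\mu) \doteq \prod_j d_j$ up to a unit of $\Q[\mu^{\pm}]$ (i.e.\ up to $\pm\mu^k$). Combined with \eqref{define Delta}, this gives $\det A(\mu)\doteq \Delta_K(\mu)$. The remaining point is to pin down the unit ambiguity: the standard normalization of the Alexander polynomial is only well-defined up to $\pm\mu^k$ anyway, so either one declares the identity to hold up to this ambiguity (which is all \eqref{define Delta} asserts, since the $P_j$ are only irreducible generators of ideals), or one fixes bases compatibly with a chosen orientation/basepoint so that the determinant is literally $\Delta_K(\mu)$ in a preferred normalization. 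I would state the lemma as an equality in $\Q[\mu^{\pm}]/(\pm\mu^k)$, matching the convention implicit in \eqref{define Delta}.

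The one genuinely non-formal step — and the main obstacle — is verifying that $D$ really is the full differential controlling $H_1$, i.e.\ that no generators or relations have been lost in the passage from the four-summand complex $C_*(\widetilde Y_K;\mu)$ to the two-term presentation \eqref{eq:presentation}. This rests on the three facts already established in the text: $\partial_1$ vanishes on $\widecheck C_1(F)\oplus C_1(Y_K)$, $\partial_1|_{\widehat C_0(F)}$ is the injective map $1-\mu$, and $\partial_2$ has zero component on $\widehat C_0(F)$. Granting these, $\ker\partial_1 = \widecheck C_1(F)\oplus C_1(Y_K)$ (the $\widehat C_0(F)$ part contributes nothing to the kernel since $1-\mu$ is injective and $\partial_2$ misses it), and $\operatorname{im}\partial_2$ lands entirely in this kernel, so $H_1 = \coker D$ exactly. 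I would spell out this diagram chase carefully, as it is where the specific geometry of the Bott–Morse perturbation (the canceling Bott handle on $F_\pm$ producing the $1-\mu$ map) enters; everything after that is the algebra of invariant factors.
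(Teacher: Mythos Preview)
Your proposal is correct and follows essentially the same route as the paper: both reduce to the standard fact that the determinant of a square presentation matrix of a torsion module over a PID equals the product of its invariant factors (Smith normal form), which is $\Delta_K(\mu)$ by \eqref{define Delta}. Your additional discussion of the rank equality, the unit ambiguity $\pm\mu^k$, and the diagram chase verifying $H_1=\coker D$ are more explicit than the paper (which simply cites the cokernel identification already established in the preceding text and refers to Lang for the module theory), but the core argument is identical.
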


\begin{proof}
The $\Q[\mu^{\pm}]$-module $H_1(\widetilde Y_K;\mu) \cong \coker (D)$ and the lemma follows from the classification theorem for finitely generated torsion modules over principal ideal domains as follows. The matrix $A(\mu)$ is a matrix of relations for the $\Q[\mu^{\pm}]$-module $H_1(\widetilde Y_K;\mu)$. The diagonal entries of its Smith normal form are the invariant factors. Their product is on the one hand equal to the determinant of $A(\mu)$, and on the other, by \eqref{define Delta}, equal to $\Delta_{K}(\mu)$. (See e.g., \cite[Theorem 7.7 in Section III.7]{LangAlgebra} for more details.) 
\end{proof}

Figure \ref{graph_f_fig} gives a schematic description of the components of $C_*(\widetilde Y_K;\mu)$ when restricted to a fundamental domain for the action of $\Z$ on $\widetilde Y_K$ by deck transformations. Pick bases as in Lemma \ref{Delta det A}, corresponding to the critical points of the Morse function $\widehat \phi$ that are contained in that fundamental domain. The differential on the Morse complex is then represented as a $\Q[\mu^{\pm}]$-module map by a matrix of the form
\[ 
A(\mu)=\left( 
\begin{matrix}
1-\mu\psi_{F} & -\mu\psi_{c} \\
\eta & d_0
\end{matrix}
\right).
\]
Here, $d_{0}$ denotes the count of flow lines from $C_{2}({Y}_{K})$ to $C_{1}({Y}_{K})$, $\psi_{F}$ denotes the count from $\widehat{C}_{1}(F)$ to $\mu\widecheck{C}_{1}(F)$, $\eta$ denotes the count from $\widehat{C}_{1}(F)$ to $C_{1}(Y_{K})$ and $\psi_{c}$ the denotes the count from $C_{2}(Y_{K})$ to $\mu\widecheck{C}_{1}(F)$. The contributions with $\mu$ correspond to flow lines that leave the fundamental domain that had been specified. By Lemma \ref{Delta det A}, we have 
\begin{equation} \label{eq:alex D}
\Delta_{K}(\mu)=\det A(\mu).
\end{equation}
%
%Note that, by setting $\mu=0$ in \eqref{eq:alex D}, we get that the leading coefficient in $\Delta_{K}(\mu)$ is $\det d_0$. If $K$ is fibered, then we can assume that $f$ has no critical points, and recover the well-known fact that the Alexander polynomial is monic.  

In order to get a geometric interpretation of $\det A(\mu)$, we will express it as a product of two determinants. At this point, we will use the inclusion $\Q[\mu^{\pm}] \subset \Q((\mu))$ into the field of formal Laurent series (with powers of $\mu$ bounded below). 

\begin{lemma}\label{l: determinant filter}
The following holds in $\Q((\mu))$:
\begin{equation} \label{eq:determinants}
\det A(\mu) = \det(1-\mu\psi_{F})\det\left(d_0+\mu\eta \left(\sum_{n\ge 0}\mu^{n}\psi_{F}^{n}\right)\psi_{c}\right).
\end{equation}
\end{lemma}
\begin{proof}
In each successive step below, subtract from the second line the product of the bottom left entry by the first line: 
\begin{alignat*}{2}\notag
\det\left( 
\begin{matrix}
1-\mu\psi_{F} & -\mu\psi_{c} \\
\eta & d_0
\end{matrix}
\right)
\ &= \
\det\left(
\begin{matrix}
1-\mu\psi_{F} & -\mu\psi_{c} \\
\mu\eta\psi_{F} & d_0+\mu\eta\psi_{c}
\end{matrix}
\right)\\\notag
\ &= \
\det\left(
\begin{matrix}
1-\mu\psi_{F} & -\mu\psi_{c} \\
\mu^{2}\eta\psi_{F}^{2} & d_0+\mu\eta\psi_{c}+
\mu^{2}\eta\psi_{F}\psi_{c}
\end{matrix}
\right) \\\notag
 &\;\;\vdots  \\%\label{eq:determinants}
\ &= \
\det\left(
\begin{matrix}
1-\mu\psi_{F} & -\mu\psi_{c} \\
\mu^N\eta\psi_{F}^N & d_0+\mu\eta
\left(\sum_{n= 0}^N\mu^{n}\psi_{F}^{n}\right)\psi_{c}
\end{matrix}
\right)
\end{alignat*}
for every $N>0$. Using the fact that $\det\left(
\begin{matrix}
A & B \\
C & D
\end{matrix}
\right) = \det(A) \det(D - C A^{-1}B)$ for block matrices, the determinant of the last matrix above is 
\begin{alignat*}{2}\notag
\det(1-\mu\psi_{F}) &\det\left(d_0+\mu\eta
\left(\sum_{n= 0}^N\mu^{n}\psi_{F}^{n}\right)\psi_{c} + \mu^N\eta\psi_{F}^N (1-\mu\psi_{F})^{-1} \mu\psi_{c} \right)
 \\
&= \
\det(1-\mu\psi_{F}) \det\left(d_0+\mu\eta
\left(\sum_{n= 0}^N\mu^{n}\psi_{F}^{n}\right)\psi_{c} + \sum_{k\geq 0}\mu^{N+k+1}\eta\psi_{F}^{N+k}\psi_{c} \right).
\end{alignat*}
In this last expression, consider the second factor in the product of determinants. It is the determinant of a sum of three terms. The first two terms give the small powers of $\mu$ in the determinant, whereas the high powers of $\mu$ come from the last summand. Therefore, by taking $N$ arbitrarily large we get the formula \eqref{eq:determinants}. 
\end{proof}

We next show that Lemmas \ref{Delta det A} and \ref{l: determinant filter} imply formula \eqref{alex}: 

\begin{lemma}
The factors in \eqref{eq:determinants} satisfy the following: 	
\begin{align*}
\det(1-\mu\psi_{F}) \ &= \ (1-\mu)\zeta_{\rm loop}(\mu),\\	
\det\left(d_0+\mu\eta \left(\sum_{n\ge 0}\mu^{n}\psi_{F}^{n}\right)\psi_{c}\right) \ &= \ \tau_{\rm Morse}(\mu),
\end{align*}	
where $\zeta_{\rm loop}(\mu)$ and $\tau_{\rm Morse}(\mu)$ are as in \eqref{zeta_loops} and \eqref{tau flow lines}, respectively.  
\end{lemma}

\begin{proof}
	We will use a standard Morse gluing result \cite[Proposition 3.13.(a)]{HutchingsReidemeister} which shows how the creation/cancellation of a pair of critical points affects spaces of flow lines and flow loops (similar to the First Cancellation Theorem, see \cite[Theorem 5.4]{MilnorHcobordism}, especially Figure 5.2).

	Consider the first determinant: 
	\begin{align} \label{det tr}
	\det(1-\mu \psi_{F}) &=\det\exp(\log(1-\mu \psi_{F}))=\exp(\tr \log(1-\mu \psi_{F}))\\\notag
	&=\exp\left(- \sum_{n\geq 1} \frac{1}{n}\mu^{n}\tr(\psi_{F}^{n})\right).
	\end{align}
%Observe that $\tr(\psi_{F}^{n})$ counts fixed points of $\psi_{F}^{n}$. Denoting by $\mathcal P\subset F$ the set of periodic points of 
%Recall that both $\zeta_{\rm loop}(\mu)$ and $\tau_{\rm Morse}(\mu)$ were defined for the admissible pair $(f,g)$. 
We relate \eqref{det tr} with the zeta function $\zeta_{\rm loop}(\mu)$ in \eqref{zeta_loops}, counting flow loops for an admissible pair $(f,g)$. The function $\widehat \phi\colon \widetilde{Y}_K \to \mathbb R$ is a perturbation of the lift $\widetilde f \colon \widetilde{Y}_K \to \mathbb R$, where the  perturbation creates pairs of canceling critical points, with corresponding  new summands in the Morse complex referred to as $\widehat C_*(F)$ and $\widecheck C_*(F)$ in Figure \ref{graph_f_fig}. 
Denote the parameter of this perturbation by $\epsilon>0$, so that $\widehat \phi \to \widetilde f$ when $\epsilon\to 0$. 

Fix an integer $N\ge 1$, then for sufficiently small $\epsilon>0$ and every integer $1\le d \leq N$, we get the following correspondence between flow loops $\gamma$ of $f$ of degree $d(\gamma) = d$ and flow lines of $\widehat \phi$. A flow loop $\gamma$ corresponds either to a diagonal entry in $\psi_F^d$, thought of as $d$ flow lines contributing to $\psi_F$ together with $d$ short canceling flow lines from $\widehat C_*(F)$ to $\widecheck C_*(F)$, or to an iterate of the $d=1$ flow loop of $f$ that corresponds to the unique flow line of $\widehat \phi$ from $\widehat C_0(F)$ to $\mu\widecheck C_0(F)$. The total contribution of the latter (and its iterates) to $\zeta_{\rm loop}$ is
\[
\exp\left(\sum_{n\geq 1} \frac{\mu^n}{n}\right) = \frac{1}{1-\mu}.
\]
The first equation follows.

%The reason why we only {\em almost} had a bijection in the previous paragraph is that certain flow loops of $f$ also correspond to the unique flow line of $\hat \phi$ from $\widehat C_0(F)$ to $\mu\widecheck C_0(F)$,
%% (recall that $F$ has a unique minimum), 
%which contributes  
%\[
%\exp\left(\sum_{n\geq 1} \frac{\mu^n}{n}\right) = \frac{1}{1-\mu}
%\]
%to $\zeta_{\rm loop}$. This explains why we need to multiply \eqref{det tr} with $(1-\mu)$ to obtain $\zeta_{\rm loop}$. 

Consider the second determinant:
	\begin{equation} \label{det poof}
	\det\left(d_0 + \mu\eta\left(1+\mu \psi_{F} +\mu^{2}\psi_{F}^{2}+\dots\right)\psi_{c}\right).
	\end{equation}
We relate \eqref{det poof} with $\tau_{\rm Morse}(\mu)$, the determinant of the Morse--Novikov complex of $f$ in \eqref{tau flow lines}. As above, we need to compare rigid flow lines of $\widetilde f$ and $\widehat \phi$. 

Consider $M(\mu)$ in \eqref{tau flow lines}. The $d_0$ term in \eqref{det poof} accounts for the `short' flow lines in the Novikov differential, that do not pick up any power of $\mu$. The term $\eta \psi_{F}^{n}\psi_{c}$ can be interpreted as the flow lines obtained by starting with the unstable manifold of an index 2 critical point ($\psi_{c}$), canceling $n$ intermediate Bott maxima and minima ($\psi_{F}^{n}$), and then intersecting with the stable manifold of an index 1 critical point ($\eta$). This gives the `long' flow lines in the Novikov differential. The lemma follows.
%
%	Finally, we need to check that there are no flow loops and flow lines other than the ones counted in the formula. To see this, consider flow loops and flow lines that stretch between at most $n$ Bott critical manifolds. As the Morse perturbation is turned off, it is standard that every flow line converges to a configuration as above and the corresponding finite dimensional gluing of flow lines gives a unique flow lines near each broken configuration for all sufficiently small perturbations. Our formulas are inductive limits in flow length, and taking smaller and smaller perturbations as $n$ increases establishes the result.  
\end{proof}

\section{Flow loops, flow lines, and holomorphic curves} \label{sec:flow lines and holo curves}
In this section we describe in more detail how a knot $K$ has an associated conormal Lagrangian $L_K\subset T^{\ast}\R^{3}$ diffeomorphic to $S^{1}\times\R^{2}$ and a knot complement Lagrangian $M_K\subset T^{\ast}\R^{3}$ diffeomorphic to $\R^{3}\setminus K$, both with natural non-exact deformations. We study holomorphic disks in $T^{\ast}\R^{3}$ with boundary on $L_{K}$ or $M_{K}$, and annuli and strips with one boundary component on $\R^3$ and the other on $L_{K}$ or $M_{K}$. For $M_K$, we establish a 1-1 correspondence between holomorphic annuli and disks, and Morse flow loops and flow lines in $\R^3 \setminus K$, respectively. The proof uses \cite{EkholmFlowTrees}, but avoids many of the complications there since the Lagrangians here admit fronts without singularities. 

\subsection{Models of $L_{K}$ and $M_{K}$} \label{models}
Let $K\subset \R^{3}$ be an oriented knot. We will consider exact and non-exact Lagrangians associated to $K$ in $T^*\R^3$, as in \cite[Section 6]{AENV}. Let $L_{K}\subset T^{\ast}\R^{3}$ denote its Lagrangian conormal:
\[ 
L_{K}=\{(q,p)\in T^{\ast}\R^{3}\colon q\in K,\; p|_{TK}=0 \}.
\]
Consider a small tubular neighborhood $\nu(K) \subset \R^3$ of $K$, with projection $\pi\colon \nu(K)\to K$. Fix a diffeomorphism $\alpha\colon K\to S^1$ (where we think of $S^1\cong \mathbb R/\mathbb Z$ as having length 1) and let $\eta_L\coloneq \pi^*d\alpha$. 
%Let $d\theta$ be a constant 1-form on $K$ of integral 1, and $\eta_L\coloneq \pi^*d\theta$. 
Given $\delta > 0$, write $\Ld$ for the Lagrangian that is obtained by applying to $L_K$ the non-exact symplectomorphism of $T^*(\nu(K))$ that is given by
\[ 
(q,p)\mapsto (q,p +\delta \, \eta_L(q)).
\]  
Note that $L_{K}\cap\R^{3}=K$ and $L_{K}^{\delta}\cap\R^{3}=\varnothing$. We will sometimes write $L_K^0$ instead of $L_K$.

Let $M_{K}$ denote the knot complement Lagrangian that is obtained by applying Lagrange surgery along the intersection $L_{K}\cap \R^{3}=K$, which cuts out $\epsilon$-neighborhoods of the knot in the two Lagrangians and glues them together.\footnote{There are two ways of performing Lagrange surgery, which do not give locally Hamiltonian isotopic Lagrangians. Nevertheless, the two versions of $M_K$ induce the same augmentations (which could be seen from the relation between $\epsilon_{L_K}$ and $\epsilon_{M_K}$ mentioned in the proof of Lemma \ref{epsilon LK v epsilon MK} below), so we can use either one for the purposes of this paper.} 
Then $M_{K}$ is diffeomorphic to $\R^{3}\setminus K$.  See \cite{MakWu} for a description of Lagrange surgery on a clean intersection, and for the fact that if the intersection locus is connected, then the result of surgery on two exact Lagrangians can also be made exact. 
%We will sometimes need to keep track of the size $\epsilon$ of the neighborhood cut out in the surgery and then call the corresponding operation $\epsilon$-surgery.  

%{\color{blue} \ 
%I changed the definition of $\Md$ below. the original definition is commented below in the tex file
%\
%}

Let $(f,g)$ be an admissible pair as in Definition \ref{def: admissible}, where $g$ is a Riemannian metric on $S^3$ and $f\colon S^3\setminus K \to S^1$ is an $S^1$-valued Morse function given by 
\begin{equation} \label{eq:f}
f(r,\phi,\theta) = \phi - C r^2
\end{equation}
in $\nu(K)\setminus K$, for some neighborhood $\nu(K)$ of $K$. Let $M_{K}$ be the surgery exact Lagrangian, and assume that the surgery region is contained in $T^*(\nu(K))$. This implies that $f$ is given by \eqref{eq:f} at the points in $M_K\cap \R^3$ that are contained in the surgery region. Observe that $M_K\setminus \R^3$ can be identified with $L_K\setminus K$ and parametrized by $S^1\times S^1 \times \R_{>0}$, with one of the $S^1$-factors corresponding to the variable $\phi$ in \eqref{eq:f}. One can then extend the restriction $f|_{M_K\cap \R^3}$ to a function $h\colon M_K\to S^1$, such that $\nabla h$ is bounded (with respect to the metric induced by $g$ on $T^*\R^3$) and $h$ has no critical points in $M_K\setminus \R^3$. Let $\nu(M_K)\subset T^*\R^3$ be a tubular neighborhood of $M_K$ with projection map $\pi\colon \nu(M_K)\to M_K$, and define $H\colon \nu(M_K)\to S^1$ as $H = h\circ \pi$. Let $X_H$ be the symplectic vector field associated to $H$ and define $\Md$ as the time-1 flow of $\delta X_H$, for $\delta\geq 0$ small enough (so that $\Md \subset \nu(M_K)$). 

%{\color{blue} if its seems helpful to the reader, I can draw a picture with $\R^3,L_K$ and $M_K$}

Given the properties of $f$, we can assume that for $\delta>0$ the intersection of $\Md$ with the zero-section is transverse, and corresponds to the critical points of $f$ (all of which have Morse indices 1 or 2). We sometimes write $M_K^0$ instead of $M_K$.

\begin{remark}
The non-exact Lagrangian shifts $\Ld$ and $\Md$ do not have cylindrical ends, but they are asymptotic to a cylinder on $\Lambda_K$. For a Lagrangian filling with cylindrical ends, $\R$-translation invariant moduli spaces of holomorphic disks at infinity (in the symplectization of the contact boundary) form boundary components of corresponding moduli spaces in the filling. In particular such families can be continued as solution spaces of disks in the filling. To see why the analogous fact for asymptotically cylindrical fillings is also true, note that the non-exact perturbation is of fixed size whereas the symplectic form in the symplectization grows exponentially. Therefore, in sufficiently large disk bundles there is an arbitrarily small change of the almost complex structure (for instance, given by conjugation by a map taking a Lagrangian to its $\delta$-shift) making the original $\R$-invariant disks holomorphic with the non-exact boundary condition.
\end{remark}

\subsection{Holomorphic disks}
We consider holomorphic disks in $T^{\ast}\R^{3}$ with boundary on the Lagrangians $L_K^\delta$ and $M_K^\delta$ with  punctures asymptotic to Reeb chords. We note that punctured holomorphic curves with only one positive puncture are injective near that puncture and therefore regular for generic almost complex structures. Also, recall that the energy of a holomorphic curve with compact domain $\Sigma$ and Lagrangian boundary $u\colon (\Sigma,\partial \Sigma)\to (T^*\R^3,L)$ is $E(u) = \int_{\Sigma} u^*\omega$. If $\partial\Sigma$ has punctures, then the definition of energy of $u$ must be adjusted and one uses the Hofer energy in the non-compact ends of the target, as in \cite{BEHWZ}.

Our next result shows that for sufficiently small $\delta>0$ the corresponding non-exact shifts $\Ld$ and $\Md$ of $L_{K}$ and $M_{K}$ do not affect moduli spaces of once punctured holomorphic disks. Hence small non-exact shifts can also be used to compute the augmentations associated to $L_K$ and $M_K$. 

 \begin{lemma} \label{LK MK augment}	
Let $J$ be an almost complex structure on $T^{\ast}\R^{3}$ that is standard near the zero section and let $L^{\delta}=\Ld$ or $L^{\delta}=\Md$. For any $E>0$, there is $\delta_0 > 0$ and a neighborhood $\mathcal{J}$ of $J$ such that for any $0<\delta\leq \delta_0$ the following holds for complex structures in $\mathcal{J}$.
\begin{itemize}
\item[$(i)$] There are no non-constant closed holomorphic disks (without punctures) of energy less than or equal to $E$, with boundary on $L^{\delta}$.
\item[$(ii)$] Assume that all moduli spaces of disks with one positive puncture on $L^{0}$ are transversely cut out. If $a$ is a Reeb chord of $\Lambda_{K}$ of degree 0, then there is a natural 1-1 correspondence 
between the holomorphic disks of energy less than or equal to $E$ with one puncture asymptotic to $a$ and with boundary on $L^{\delta}$, and the analogous disks with boundary on $L^{0}$. More precisely, there exist disjoint connected $C^{0}$-neighborhoods of those disks with boundary on $L^{0}$, such that each connected neighborhood contains exactly one disk with boundary on $L^{\delta}$. 
\end{itemize} 
\end{lemma}

\begin{proof}
Statement $(i)$ is a consequence of SFT compactness. If there is a non-constant holomorphic disk $u^\delta\colon (D^2,\partial D^2) \to (T^*\R^3,L^\delta)$, then $[u^\delta|_{\partial D^2}] \in H_1(L^\delta;\Z)$ is non-trivial, since otherwise $u^\delta$ would have vanishing area, by exactness of $\omega$ in $T^*\R^3$. Also, by monotonicity, for a ball $B_{R}\subset \R^{3}$ of sufficiently large radius $R$, any holomorphic curve with boundary on $L_{K}$ that maps some point outside $T^{\ast}_{B_{R}} \R^{3}$ has area $>E$.

Now, if $(i)$ does not hold then there is a sequence of such non-constant disks $u^{\delta}$, $\delta\to 0$. 
An SFT convergent subsequence must be bounded in $T^*\R^3$, since otherwise the SFT building in the limit would have a component at infinity with no positive puncture, in violation of the maximum principle. Thus, the curves in the SFT limit have no punctures and the limit is the usual Gromov limit. This Gromov limit would contain a non-constant holomorphic disk with boundary on $L^{0}$, which contradicts exactness of $L^{0}$. The existence of such a disk follows from the fact that the homotopy class of the boundary in a sequence of holomorphic disks with Lagrangian boundary is preserved under Gromov limit, see \cite[Proposition 3.2(ii)]{FrauenfelderGromovCpctness}. 

Statement $(ii)$ is a consequence of compactness and transversality. Denote by $\M^{a,E}(L^{\delta})$ the moduli space of punctured disks for $L^{\delta}$ in the statement. %Let $E$ and $\delta_0$ be as before and 
Consider the space 
$$
\M^{a,E}({[0,\delta_0]}) \coloneq \bigcup_{\delta\in[0,\delta_0]} \M^{a,E}(L^{\delta}).
$$
The assumed transversality for disks with boundary on $L^{0}$ implies that, for $\delta_{0}>0$ sufficiently small, each disk in $\M^{a,E}(L^{0})$ gives a boundary point of a single connected component of the space $\M^{a,E}({[0,\delta_0]})$. Combining transversality with Gromov compactness, we can assume that this component projects as a diffeomorphism to $[0,\delta_{0}]$ and that the solutions in $(0,\delta_{0}]$ are arbitrarily close to that at $0$. Gromov compactness also implies that every component of $\M^{a,E}({[0,\delta_0]})$ appears in this way, for some disk in $\M^{a,E}(L^{0})$. The result follows. 
\end{proof}

\begin{remark}
We will see in Remark \ref{remark: braided model DGA} that, in the flow tree model for knot contact homology \cite{KCH} we could arrange for no Reeb chord of degree 0 to be the asymptotic limit of a punctured disk with boundary on ${L_K}$. In that case, Gromov compactness would suffice to imply that below any energy threshold, and for $\delta$ sufficiently small, $\Ld$ would bound no holomorphic disks with one boundary puncture asymptotic to a Reeb chord of degree 0. Put differently, the 1-1 correspondence in part $(ii)$ of Lemma \ref{LK MK augment} would be between empty sets.
\end{remark}

\subsection{Holomorphic annuli and strips} \label{ssec:annuli}
We consider next holomorphic annuli in $T^*\R^3$ between the zero section and the Lagrangians $\Ld$ and $\Md$, respectively. 

Given a real number $R>0$, call $A_R:=[0,R] \times S^1$ the {\em annulus of modulus $R$} (or of {\em conformal ratio $R$}), where we identify $S^1$ with $\R/\Z$. Given a Lagrangian $L$ in $T^*\R^3$, define
\begin{align*}
\M_{\rm an}(L) &= \M_{\rm an}(\R^3,L)\\ 
&= \bigcup_{R\in(0,\infty)} \left\{ u\colon A_R \to T^*\R^3\colon 
    \overline\partial_J u = 0, u(\{0\}\times S^{1})\subset \R^3, \, u(\{R\}\times S^{1}) \subset L
  \right\}
\end{align*}
to be the space of $J$-holomorphic annuli of arbitrary modulus, stretching from $\R^3$ to $L$. Given $E>0$, denote by $\M_{\rm an}^E(L)$ the subset of annuli of energy (that is, $\omega$-area) at most $E$. 

The automorphism group of the Riemann surface $A_R$ consists of domain-rotations in the $S^1$-factor, and can be identified with $S^1$. Hence, $S^1$ acts on $\M_{\rm an}(L)$ by pre-composition with domain automorphisms. Let $u\in \M_{\rm an}(L)$ be an annulus with multiplicity $m(u)$ at least $2$. This means that $u$ factors as $v \circ \varphi$, where $\varphi\coloneq A_{R/{m(u)}} \to A_R$ given by $\varphi(s,t) = (m(u)s,m(u)t)$ is an $m(u)$-to-$1$ cover and $v$ is an annulus that does not admit such a factorization. Then, the isotropy group of $u$ under the $S^1$-action on $\M_{\rm an}(L)$ is $\Z/(m(u) \Z)$.

Denote the quotient by 
$$
\widetilde \M_{\rm an}(L)\coloneq \M_{\rm an}(L)/S^1.
$$
Since isotropy groups of multiply covered annuli are non-trivial, this quotient space is an orbifold rather than a manifold.

Let us consider now the case where $L$ is of the form $\Ld$. Since the Maslov class of $\Ld$ is zero, the expected dimension of $\M_{\rm an}(\Ld)$, i.e.~the space of parametrized annuli, is 1. 
Since $L_K$ is homotopy equivalent to $K$, we can write 
the homology class 
$\left[u|_{\{R\} \times S^1}\right]=d(u) x$ on $H_1(L_K^\delta;\Z)$, for some $d(u) \in \Z$, where $x$ is the fundamental class of the oriented knot $K$. 
The energy of a holomorphic annulus $u\in \M_{\rm an}(\Ld)$ of modulus $R$ is then
$$
E(u) = \int_{A_R} u^*\omega = \delta \, d(u).
$$ 
Since the energy of a holomorphic curves is non-negative and $\R^{3}\cap L_{K}^{\delta}=\varnothing$ it follows that $d(u)> 0$. Given an integer $d>0$, let
$$\M_{\rm an}^{\delta d}(\Ld) \subset \M_{\rm an}(\Ld)$$
denote the space of annuli of energy $\le \delta d$. Then $\M_{\rm an}^{\delta d}(\Ld)$ consists of the annuli $u$ with $d(u)\leq d$.

It will be useful to assume that $K$ is a real analytic submanifold of $\R^3$. By \cite[Lemma 8.6]{CELN}, there is a compatible almost complex structure $J$ on $T^*\R^3$ for which $K$ has a neighborhood $\nu(K)$ that admits a holomorphic parametrization $\varphi\colon S^1\times (-1,1) \times B_1^4 \to \nu(K)$, where $B_r^4\subset \C^2$ denotes the ball of radius $r>0$ and 
\begin{itemize}
\item $K = \varphi(S^1\times \{0\}\times \{0\})$,
\item $\R^3\cap \nu(K) = \varphi(S^1\times \{ 0 \} \times B_1^2)$ and 
\item $L_K\cap \nu(K) = \varphi(S^1\times \{ 0 \} \times i B_1^2)$,
\end{itemize}
where $B_r^2\subset \R^2$ is the ball of radius $r$. Identifying the factor $S^1\times (-1,1)$ with a neighborhood of the zero section in $T^*S^1$, we can further assume that 
\begin{itemize}
 \item $\Ld\cap \nu(K) = \varphi(S^1\times \{ \delta \} \times i B_1^2)$ for $\delta$ sufficiently small.
\end{itemize}

\begin{lemma} \label{annuli L}
 For any $d >0$, there is $\delta_0 > 0$ such that for any $0<\delta\leq \delta_0$ there is a natural 1-1 correspondence between covers of $K$ of order up to $d$ and elements in $\M_{\rm an}^{\delta d}(\Ld)$. Moreover, any annulus in $\M_{\rm an}^{\delta d}(\Ld)$ is regular for $J$ as above. 
\end{lemma}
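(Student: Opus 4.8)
The plan is to exploit the explicit holomorphic model $\varphi\colon S^1\times(-1,1)\times B_1^4\to\nu(K)$ provided just before the statement, in which $\R^3$, $L_K$ and $\Ld$ all become flat subspaces. In this model, a holomorphic annulus from $\R^3$ to $\Ld$ with boundary homology class $d(u)x$ and sufficiently small energy $\le\delta d$ is forced to lie inside $\nu(K)$: its energy is $\delta\,d(u)\le\delta d$, and by a monotonicity/isoperimetric argument (the area of any piece escaping a fixed distance from $K$ is bounded below independently of $\delta$) it cannot leave the neighborhood once $\delta_0$ is small enough. So first I would prove this confinement statement, using Gromov-type area estimates exactly as in the proof of Lemma \ref{LK MK augment}$(i)$, to reduce to annuli with image in $\nu(K)\cong S^1\times(-1,1)\times B_1^4$.

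Next, inside the model I would analyze such confined annuli directly. Write the target coordinates as $(\theta,s)\in S^1\times(-1,1)$ (a neighborhood of the zero section in $T^*S^1$) and $w\in B_1^4\subset\C^2$, with $\R^3$ cut out by $s=0$ and $w\in\R^2$, and $\Ld$ cut out by $s=\delta$ and $w\in i\R^2$. An annulus $u=(\zeta,w)\colon A_R\to\nu(K)$ is then a holomorphic map whose first component $\zeta$ into the (complexified) cylinder satisfies mixed Lagrangian boundary conditions on the two cylinders $\{s=0\}$ and $\{s=\delta\}$, and whose $\C^2$-component satisfies totally real boundary conditions on $\R^2$ and $i\R^2$. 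Since there are no nonconstant holomorphic disks or spheres available at this energy, the $\C^2$-component must be constant equal to $0$ (the only point in $\R^2\cap i\R^2$, after noting the boundary circles of the annulus are each sent into a single totally real plane and using the maximum principle / Schwarz reflection to conclude $w\equiv 0$). This pins $u$ into the cylinder factor $T^*S^1$, where holomorphic annuli with boundary on the zero section and a shift by $\delta$ are completely explicit: they are the standard strips wrapping $d$ times, i.e.\ $\zeta(x+iy)=d\cdot(\text{affine})$, giving for each $d$ with $1\le d\le$ (the given bound) exactly one such annulus up to the $S^1$ domain-rotation, namely the $d$-fold cover of the primitive one. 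This produces the claimed bijection with covers of $K$ of order up to $d$.

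Finally I would address regularity: since each such annulus lies in the cylinder factor and is a cover of an embedded (primitive) annulus there, its linearized operator splits as the operator in the $T^*S^1$-direction plus the operator in the $\C^2$-direction with boundary conditions $\R^2,iR^2$. The first is the standard $\bar\partial$-operator for a strip between a zero section and a nearby shift, which is surjective with one-dimensional cokernel-free kernel (matching the expected dimension $1$, the modulus); the second is a $\bar\partial$-operator on the annulus with constant totally real boundary conditions whose index and vanishing cokernel can be computed from the Maslov-type data (the loop of boundary conditions $\R^2\leadsto i\R^2\leadsto\R^2$ has total Maslov index $0$), so it is also surjective. Hence the full operator is surjective and the annuli are regular; because the almost complex structure is the fixed integrable $J$ from \cite[Lemma 8.6]{CELN} near $K$, there is no need to perturb. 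The main obstacle I expect is the confinement step: making the area lower bound for the part of the annulus outside a fixed tubular neighborhood uniform in $\delta$, and handling possible bubbling or degeneration of the modulus $R\to 0,\infty$ in the Gromov limit so that the limit configuration still contradicts exactness of $\R^3$ and $L_K$; once the curves are trapped in the holomorphic model everything else is an explicit computation.
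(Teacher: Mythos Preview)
Your proposal is correct and follows essentially the same three-step structure as the paper's proof: (1) confinement to $\nu(K)$ via monotonicity, (2) triviality of the $\C^2$-projection, (3) explicit classification and regularity in the cylinder factor.

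A few minor points of comparison. For step (2), your justification via ``maximum principle / Schwarz reflection'' and the remark about absent disks and spheres is somewhat indirect; the paper's argument is cleaner: the projection to $B_1^4$ is a holomorphic annulus with boundaries on the exact Lagrangians $\R^2$ and $i\R^2$, so by Stokes its area vanishes and it is constant (necessarily at the unique intersection point $0$). For step (1), the paper handles confinement directly by the monotonicity lemma with Lagrangian boundary (a point outside $\nu(K)$ has a ball of fixed radius meeting at most one of $\R^3$, $\Ld$, forcing a uniform area lower bound), so there is no need to invoke Gromov limits or worry about modulus degeneration as you anticipate. Your treatment of regularity by splitting the linearized operator along the product is more explicit than the paper's one-line ``explicit computation,'' and is a fine way to fill in that detail.
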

\begin{proof}
We begin by showing that, for fixed $d>0$ and for $\delta>0$ sufficiently small, the images of all $u\in \M_{\rm an}^{\delta d}(\Ld)$ are contained in the fixed neighborhood $\nu(K)$. %It will be useful to denote $U_r \coloneq \varphi(S^1\times (-r,r) \times B_r^4)$. 
If we assume that $u(p)\notin \nu(K)$, then there is a ball of radius $\frac12$ around $u(p)$ that intersects at most one of the Lagrangians $\R^{3}$ and $\Ld$. It follows by the monotonicity lemma with Lagrangian boundary conditions \cite[Lemma 3.4]{ECL} that the area of $u$ is bounded below by some positive constant. Since the area of such $u$ goes to 0 as $\delta\to 0$, this then implies that $u$ lies inside $\nu(K)$ for $\delta$ sufficiently small. 

For such $u$, we consider the projection to $B_{1}^4$ as in the definition of $\varphi$ above, where $L\cup\R^{3}$ maps to $\R^{2}\cup i\R^{2}$. By the exactness of the projection to $B_1^4$ of the restriction of $\R^3$ and $\Ld$ to $\nu(K)$, the projection of $u$ to $B_{1}^4$ is constant. The lemma is now reduced to studying holomorphic annuli on the target $S^1\times (-1,1)$, with boundary components on $S^1\times \{0\}$ and $S^1\times \{\delta\}$, which are just unbranched covers of the annulus between these two circles. 

The linearized Cauchy--Riemann operator along any such annulus is the standard $\bar\partial$-operator on $\C\times \C^{2}$ with boundary condition  $\R\times\R^{2}$ along one boundary and $\R\times i\R^{2}$ along the other. It then follows by elementary complex analysis that the holomorphic annuli are regular. 
\end{proof}

Lemma \ref{annuli L} implies that, for every fixed integer $d>0$ and $\delta$ sufficiently small, the connected components of $\M_{\rm an}^{\delta d}(\Ld)$ are finitely many circles, each consisting of domain rotations of some annulus $u$ of a certain multiplicity $m(u)$. The moduli space $\widetilde \M_{\rm an}^{\delta d}(\Ld)$ is therefore a finite collection of orbifold points, with isotropies given by $\Z/(m(u) \Z)$. 

%{\color{blue} the next paragraph was changed since the definition of the shifted $\Md$ changed. the previous version is commented below in the tex file. I also changed $\eta_M^{\ast}$ to $\nabla_g f$ in Lemma \ref{annuli loops}}

Consider next holomorphic annuli between $\R^3$ and $\Md$. Just like $\Ld$, the Lagrangian 
$\Md$ has Maslov class 0 and hence 
the expected dimension of the space $\M_{\rm an}(\Md)$ of parameterized annuli is 1.
Recall that we used an admissible pair $(f,g)$ in the definition of $\Md$. Given a flow loop or a flow line $\gamma \subset \R^3$ of $\nabla_g f$, we define its action as
\[ 
\mathfrak{a}(\gamma)=\int_{\gamma} df.
\] 

%Consider next holomorphic annuli between $\R^3$ and $\Md$. Just like $\Ld$, the Lagrangian 
%$\Md$ has Maslov class 0 and hence 
%the expected dimension of the space $\M_{\rm an}(\Md)$ of parameterized annuli is 1.
%Recall the closed 1-form $\eta_M = f^{\ast}\beta$ on $\R^3 \setminus K$ chosen above. Fix a Riemannian metric $g$ on $\R^{3}$ and let $\eta_M^{\ast}$ denote the vector field that is the metric dual of $\eta_M$.
%Given a flow loop or a flow line $\gamma \subset \R^3$ of $\eta_M$, we define its action as
%\[ 
%\mathfrak{a}(\gamma)=\int_{\gamma}\eta_M.
%\] 
%Recall that, for a generic admissible pair $(f,g)$, the critical points and flow loops of $\eta_M^{\ast}$ are non-degenerate, and the intersections of stable and unstable manifolds are transverse. We will assume that $(f,g)$ is a generic pair in this sense. 

%{\color{blue} in the next paragraph, can we assume that after modifying the metric $g$, the new pair $(f,g)$ is still admissible in the sense of Definition \ref{def: admissible}? I think yes. If so, we should point it out}

We will next adapt the metric and the Lagrangian $M_{K}$ to the rigid flow loops and lines as in \cite[Section 4.3]{EkholmFlowTrees}. The modifications here correspond to the most basic cases in that paper, since our flow objects here are simply flow lines (there is no branching as in flow trees). We deform the Lagrangian and the metric so that they are of the form described in \cite[Section 4.3.6--7]{EkholmFlowTrees} near a flow line. In addition, the metric is flat and the gradient is constant along a flow loop. The construction here depends on the action level $\mathfrak{a}_{0}$. As we increase the action, new flow objects need to be taken into account and we shrink the neighborhoods where the Lagrangian and metric in previous steps were normalized. We do that in such a way that when passing from action level $\mathfrak{a}_{0}$ to $\mathfrak{a}_{1}$, $\mathfrak{a}_{1}>\mathfrak{a}_{0}$, we keep the metric and Lagrangian unchanged in some subset of the set where it was previously normalized. Note also that we can keep the Lagrangian fixed near the knot and the function of the deformed Lagrangian together with the metric is still an admissible in the sense of Definition \ref{def: admissible}. 

Fix the natural almost complex structure $J$ on $T^{\ast} \R^{3}$ determined by the metric $g$, see \cite[Section 4.4]{EkholmFlowTrees}. 
%Given a $J$-holomorphic annulus or strip $u$ with one boundary component in $\R^3$ and the other in $\Md$, its energy is  
%$$
%E(u) = \int u^*\omega.
%$$
%Given $E\geq 0$, denote by $\M_{\rm an}^E(\Md)$ the space of annuli $u\in \M_{\rm an}(\Md)$ with $E(u)= E$. 

%{\color{blue} might want to define instead space of annuli with bounded boundary action, according to next result. also, want the multiply covered annuli as for $\Ld$, denoted by $\widetilde \M$}

\begin{lemma} \label{annuli loops}
For any $\mathfrak{a}>0$ there exists $\delta_{0}>0$ such that, for any $\delta<\delta_{0}$, there is a natural 1-1 correspondence between rigid flow loops and flow lines of $\nabla_g f$ of action $\leq \mathfrak{a}$, and $J$-holomorphic annuli and $J$-holomorphic strips, respectively, between $\R^{3}$ and $\Md$ of action $\leq \delta\mathfrak{a}$. More precisely, there exists a neighborhood of any simple rigid flow loop that contains exactly one simple rigid holomorphic annulus, and there exists a neighborhood of any rigid flow line that contains exactly one holomorphic strip. Moreover, there is a natural 1-1 correspondence between multiple flow loops over simple loops and multiply covered annuli over simple annuli. These $J$-holomorphic annuli and strips are regular.    
\end{lemma}

\begin{proof}
The proof is a simpler version of the corresponding result in \cite{EkholmFlowTrees}. The case of strips follows immediately from \cite[Theorem 1.1]{EkholmFlowTrees}. 

Consider the case of annuli. We first show that rigid annuli converge to rigid flow loops as $\delta\to 0$. The first step is to bound the length of the boundary of the annuli in terms of $\delta$ times the action as in \cite[Lemma 5.2 and 5.4]{EkholmFlowTrees}. Then one uses subharmonicity of the square of the momentum coordinate to confine the holomorphic curves to an $\mathcal{O}(\delta)$-neighborhood of the zero section as in \cite[Lemma 5.5]{EkholmFlowTrees}. With this established, basic elliptic estimates give control of the $C^{1}$-norm of the holomorphic maps as in \cite[Lemma 5.6]{EkholmFlowTrees}. (Compared to the flow tree case, only the simplest case is required here.) At this stage \cite[Lemmas 5.13 and 5.18]{EkholmFlowTrees} gives the desired convergence to flow loops.

We next show that there is a unique holomorphic annulus near each flow loop. Our choice of flat metric near the flow loop and the deformation of the Lagrangian gives an obvious holomorphic annulus over the flow loop. An explicit calculation shows that the linearized operator is uniformly surjective, and it follows from this transversality that the annulus is unique in a small neighborhood. 
\end{proof}

%
% NOTE: $L_K$ and $M_K$ are no longer graphical in this paper, so I erased the following remark
%
%\begin{remark}
%The proof above works for the shift by a 1-form of any graphical exact Lagrangian, since no flow trees need to be considered. In particular, this argument could replace the proof of Lemma \ref{annuli L} above. 
%\end{remark}

\subsection{Orientations of moduli spaces}
As usual for holomorphic curves with boundary, we orient moduli spaces via the Fukaya orientation \cite{FOOO}. Fukaya's original construction concerns moduli spaces of holomorphic disks, and needs a coherent orientation of the index bundle over the space of maps from a disk into a symplectic manifold with Lagrangian boundary condition. Such orientations exist provided the Lagrangian is relatively spin. In our case, the symplectic manifold is spin and therefore the relative spin condition reduces to the Lagrangian being spin. A spin structure on the Lagrangian allows us to trivialize the tangent bundle over the 2-skeleton, which in turn gives a trivialization (up to homotopy) over the boundary of any map of the disk. The index bundle over the space of maps is then oriented by deforming any bundle to a connect sum of a trivialized bundle over a disk and a bundle over $\C P^{1}$ attached at the midpoint of the disk. 
As the bundle over the projective line is canonically oriented, this together with the trivialization over the disk induces the Fukaya orientation.

\subsubsection{Disks with corners}\label{ssec:orientdisks}
Below we will discuss also orientations of holomorphic disks with punctures asymptotic to Reeb chords or Lagranagian intersection points. Here we import the Fukaya orientation by choosing capping disks at each corner or Reeb chord, i.e., a disk with one puncture and a linear Lagrangian boundary condition that connects the transverse tangent spaces to the Legendrian or Lagrangian at the Reeb chord or double point and an orientation of the determinant of the $\bar\partial$-operator with the corresponding boundary condition, see \cite[Section 3.3]{EkholmEtnyreSullivanOrientations} for details. The key property of these capping disks is that if we glue the operators of two capping disks at a Reeb chord or double point then we get a linearized boundary condition for the $\bar\partial$-operator on the closed disk that has the Fukaya orientation. 

\subsubsection{Annuli}\label{ssec:orientannuli}
In a symplectic manifold of dimension $2n$ with $n$ odd, we can orient the index bundles over more complicated surfaces using a similar construction. Start from a closed surface (in the case of disks discussed above, this closed surface was $\C P^{1}$) and attach disks punctured at their centers at interior punctures. The orientation of the resulting index bundle now a priori depends on an ordering of the boundary components. However, since the index of the disks with constant boundary condition is $n$ and the automorphism group of the disk punctured at the origin is a circle, the total index contribution of each disk is $n-1$. Since this is even, the orientation of the index bundle is independent of ordering. We will use this Fukaya orientation to orient the space of holomorphic annuli. For the Lagrangians $\Ld$, we define the annulus counting functions
$$
\An_{L_K}^{\delta,d}(\lambda) := \sum_{[u] \in \widetilde \M_{\rm an}^{\delta d}(\Ld)} \frac{\sigma(u)}{m(u)} \lambda^{d(u)} \in \C[\lambda],
$$
where $\sigma(u)\in \{\pm 1\}$ is determined by the orientation scheme that was just described. 
We also define 
$$
\An_{L_K}(\lambda) := \lim_{d\to \infty} \lim_{\delta \to 0} \An_{L_K}^{\delta,d}(\lambda) \in \C[[\lambda]]. 
$$
Analogously, for $\Md$ we define generating functions $\An_{M_K}^{\delta,d}(\mu)$ and $\An_{M_K}(\mu)$. The definition also includes signs $\sigma(u)\in \{\pm 1\}$. 
%We have $\sigma(u)=\sigma(\gamma)$, where $\gamma$ is the flow loop corresponding to $u$ by the previous lemma, and $\sigma(\gamma)$ is as in equation\eqref{zeta_loops}. 
By analogy with \eqref{zeta_loops}, we define $\zeta_{\rm{an}}(\mu) = \exp\left(\An_{M_K}(\mu)\right)$. We also define $\tau_{\rm{str}}$ as the analogue of $\tau_{\rm{Morse}}$ in \eqref{tau flow lines}, replacing counts of flow lines by counts of holomorphic strips with boundary components mapping to $\R^3$ and $\Md$.

We next specify spin structures on $L_{K}$ and $M_{K}$. There is a natural identification $\iota$ of a neighborhood of the Legendrian torus at infinity in $L_{K}$ and $M_{K}$ with a punctured neighborhood of the knot $K$. Let $\iota^\ast$ denote $\iota$ composed with the orientation reversing diffeomorphism that changes the orientation of the fibers in the punctured neighborhood of $K$. This neighborhood contains the longitude and meridian of the knot $K$ that generates $H_{1}(L_{K})$ and $H_{1}(M_{K})$, respectively. Equip $L_{K}$ and $M_{K}$ with the unique spin structures that agree with the unique spin structure on $\R^{3}$ in this neighborhood (under the natural identification of neighborhoods). Equivalently, there are also natural identifications of $L_K$ with a neighborhood $\nu(K)\subset \R^3$ of $K$, and of $M_K$ with $\R^3\setminus K$; the spin structures on $L_K$ and $M_K$ are the ones inherited from $\R^3$ by restriction, under these identifications.

\begin{lemma}\label{signs of flow loops and annuli}
	Consider $\Ld$ or $\Md$ for sufficiently small $\delta>0$ so that Lemma \ref{annuli L} respectively \ref{annuli loops} holds. Then with one of the two identifications $\iota$ or $\iota^\ast$, the orientation sign $\sigma(u)$ of a holomorphic annulus and the sign of the corresponding flow loop agree.
\end{lemma}

\begin{proof}
	Consider a model Maslov zero annulus corresponding to a flow loop with normal structure corresponding to two eigenvalues of the return map smaller than 1, that we take to be a Bott family of local minima in fixed local coordinates. Consider also similar model annuli with normal structure corresponding to maxima (eigenvalues or the return map larger than 1) or saddles (eigenvalues of return map on opposite sides of 1). The boundary condition in the case of maxima can be rotated to that of minima by changing the pull-back trivialization to its negative, without affecting the kernel or the cokernel. Connecting the trivializations then shows that the signs agree. For the case of the saddle, the same type of deformation reverses the orientation of one of the Lagrangians. This alters the Fukaya orientation. 
 
    We next show that any transverse flow loop of a function $f\colon M_K\to S^1$  admits coordinates in which it can be deformed to one of the model problems above, without changing the signs of the differences of the eigenvalues of its return map and $1$. Pick two nearby points $p_1$ and $p_2$ on the orbit, so that there is a short flow line from $p_2$ to $p_1$. The kernel of $df$ then gives planes $\Pi$ normal to the flow loop along the flow loop, that we think of as the fibers in a tubular neighborhood of the flow loop. Integrating the restricted vector field $\nabla f|_{\Pi}$, along the sub-interval along the loop from $p_1$ to $p_2$ with initial conditions in $\Pi_{p_1}$ gives coordinates in a neighborhood of the origin in the normal planes. Similarly, integrating along the short path between $p_1$ and $p_2$ gives (almost constant coordinates) along the short arc between $p_2$ and $p_1$. In these coordinates, the linearized return map between $p_1$ and $p_2$ equals $0$. Changing coordinates near the midpoint of the short arc by the flow in the normal plane of the gradient of a suitable quadratic form so that the coordinate system closes up, then gives coordinates along the flow loop in which the return map is that of a constant quadratic form supported in a small arc. If we then spread out the support of the quadratic loop over the whole loop we get a flow loop corresponding to the model annuls above. It then follows that any flow loop admits a coordinate deformation that does not change the sign of the eigenvalues of the difference between the linearized return map and the identity to a model annulus. 

    It follows that the flow loop sign and the Fukaya sign agree up to an over all sign. Since the Fukaya sign changes when the orientation of one of the Lagrangians change, the lemma follows.          
\end{proof}

%{\color{blue} there could still be an overall sign difference between the Fukaya signs of annuli and the corresponding Morse signs of loops, right? also, are the spin structures chosen before the lemma used in the proof?}

Lemma \ref{annuli L} has the following consequence, which will be useful below. 
\begin{corollary} \label{AnL}
	For any knot $K\subset\R^{3}$ we have	
	$$
	\An_{L_K}(\lambda) = \sum_{k>0} \frac{\lambda^{k}}{k} = - \log(1-\lambda).
	$$
\end{corollary}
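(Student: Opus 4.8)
The plan is to combine Lemma~\ref{annuli L} with the explicit sign and multiplicity computation already recorded in the discussion preceding the corollary. By Lemma~\ref{annuli L}, for each fixed $d>0$ and all sufficiently small $\delta>0$, the orbifold $\widetilde\M_{\rm an}^{\delta d}(\Ld)$ is in natural bijection with the set of connected covers of $K$ of order $1,2,\dots,d$. For the cover of order $k$, the underlying simple annulus has multiplicity $m(u)=k$ (it factors through a $k$-fold cover $S^1\to S^1$ and not through any larger one) and its boundary homology class on $H_1(L_K^\delta;\Z)$ is $k$ times the fundamental class $x$ of the oriented knot, so $d(u)=k$. As noted just after the definition of $\An_{L_K}^{\delta,d}$, the Fukaya orientation analysis degenerates the annulus to two disks with trivial boundary conditions whose constant-solution kernels intersect along the orientation of $K$, which gives $\sigma(u)=+1$ for every such $u$.

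With these three facts in hand, the definition of the annulus counting function gives, for each $d$ and each sufficiently small $\delta$,
\begin{equation*}
\An_{L_K}^{\delta,d}(\lambda)=\sum_{[u]\in\widetilde\M_{\rm an}^{\delta d}(\Ld)}\frac{\sigma(u)}{m(u)}\lambda^{d(u)}=\sum_{k=1}^{d}\frac{1}{k}\lambda^{k}.
\end{equation*}
Since the right-hand side is already independent of $\delta$, taking the limit $\delta\to0$ changes nothing, and then letting $d\to\infty$ yields
\begin{equation*}
\An_{L_K}(\lambda)=\lim_{d\to\infty}\lim_{\delta\to0}\An_{L_K}^{\delta,d}(\lambda)=\sum_{k>0}\frac{\lambda^{k}}{k}=-\log(1-\lambda),
\end{equation*}
which is the claimed identity; the last equality is just the Taylor expansion of $-\log(1-\lambda)$, valid as an identity of formal power series in $\C[[\lambda]]$.

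There is essentially no hard analytic content left at this stage: all the geometry — confinement of the annuli to the tubular neighborhood $\nu(K)$ via the monotonicity lemma, the reduction to unbranched covers of a flat annulus, and the regularity/transversality — is packaged inside Lemma~\ref{annuli L}. The only points that require a moment of care in writing up are bookkeeping ones: first, that a connected $k$-fold cover genuinely has $m(u)=k$ and $d(u)=k$ (so that the exponent and the isotropy order coincide), and second, that the sign $\sigma(u)$ is uniformly $+1$, which is exactly the orientation computation already sketched in the paragraph defining $\An_{L_K}^{\delta,d}$ and which I would simply invoke. The mild subtlety worth a sentence is that the double limit is harmless precisely because $\An_{L_K}^{\delta,d}$ stabilizes in $\delta$ for each fixed $d$, so the order of limits is immaterial and the result is a well-defined element of $\C[[\lambda]]$.
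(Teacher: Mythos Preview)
Your proof is correct and follows exactly the approach the paper intends: the corollary is stated there without proof, as an immediate consequence of Lemma~\ref{annuli L} together with the sign computation $\sigma(u)=+1$ recorded just before it. Your write-up simply fills in the straightforward bookkeeping the paper leaves implicit.
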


\begin{remark} \label{simple annuli}
	The count of annuli $\An_{L_K}$ can be compared to the count of holomorphic disks in \cite{EkholmKucharskiLonghi1} and \cite{EkholmKucharskiLonghi2}, where so called basic disks are counted by the contribution of all their multiple covers. In the present case one could reformulate the count above by counting only simply covered annuli, but counting them by the contributions of all their multiple covers. In other words, if $\mathcal{M}_{\rm an}'(\Ld)$ is the moduli space of simple annuli then
	\[ 
	\An_{L_K}(\lambda) := \lim_{d\to \infty} \lim_{\delta\to 0} \sum_{[u] \in \widetilde \M_{\rm an}^{'d \delta}(\Ld)} \sum_{m=1}^{\infty}\frac{\sigma(u^m)}{m} \lambda^{m \, d(u)} \in \C[[\lambda]],
	\]	
where $u^m$ is the $m$-fold cover of $u$.
%{\color{blue} instead of writing $\sigma_m(u)$ in the previous formula, should we write $\sigma(u^m)$, with $u^m$ being the $m$-fold cover of $u$?}
 
%	See Remark \ref{rmk:fibered K} above for another context where one also counts all the multiple covers associated to a simple object.
\end{remark}

\begin{remark}
	The count of annuli can also be compared to the count of holomorphic curves in the framed skein module of the brane as in \cite{EkholmShende}, where only so called bare curves are counted. Here we would have to perturb out the multiply covered annuli and count them in the $U(1)$-skein module. One could then simplify the count and map the $U(1)$ framed skein to `homology and framing', which corresponds to counting generalized holomorphic curves. Our count here should then correspond to counting generalized holomorphic curves of Euler characteristic $\chi=0$, which is a certain limit of the generalized curve count. In particular, the exact linking and self-linking of the boundary does not matter here, since such terms would contribute only to the counts of generalized curves of lower Euler characteristic. Since there are no holomorphic disks, the count $\Psi_{L_K,\R^{3}}$ of disconnected generalized curves should then be of the form
	\[ 
	\Psi_{L_K,\R^{3}}=\exp\left(\An_{L_K}(\lambda)+\mathcal{O}(g_{s})\right).
	\] 
\end{remark}

\begin{proof}[Proof of Theorem \ref{T:Alex zeta tau}]
The result follows from combining Equation \eqref{alex} with Lemmas \ref{annuli loops} and \ref{signs of flow loops and annuli}. 
\end{proof}

\begin{remark}
The product $\zeta_{\text{an}}(\mu) \cdot \tau_{\text{str}}(\mu)$ in Theorem \ref{T:Alex zeta tau} and the corresponding $\lambda$-dependent analogue for $L_K$  
can be thought of as torsions of the Lagrangian Floer complexes $CF^*(\R^3,\Md)$ and $CF^*(\R^3,\Ld)$, respectively (where $\Md$ and $\Ld$ are twisted by local systems). The invariance of Floer torsion under change of almost complex structure and Hamiltonian isotopy is studied in \cite{LeeTorsion}. We also discuss the invariance of Floer torsion in Section \ref{sec:SFT-stretch}. 
\end{remark}

\section{Knot contact homology and augmentations} 
\label{choices}
In this section we first review some aspects of knot contact homology, i.e., the Chekanov-Eliashberg dg-algebra of the Legendrian conormal of a knot which is generated by Reeb chords and with differential that counts punctured holomorphic disks, and discuss some geometric data that is used to define and compute the differential. We then turn to augmentations of knot contact homology and discuss augmentation varieties and augmentation polynomials. 

%\
%
%{\color{blue} Below, I think we want to fix $\Lambda_K$ from braiding $K$ around the unknot in such a way that $L_K$ has $\epsilon_{L_K}(a) = 0$ for every chord $a$ of degree 0 (we mention below that such braiding exists, but we are not explicit about where we make that assumption and where we don't)}

\subsection{Geometry of coefficients in the dg-algebra} 
\label{choices and coeffs}

The dg-algebra of an oriented knot $K\subset \R^3$ is the Chekanov--Eliashberg differential graded algebra of its Legendrian conormal $\Lambda_{K}\subset ST^{\ast}\R^{3}$. We denote it $\mathcal A_K$. It is a tensor algebra freely generated by the Reeb chords of $\Lambda_K$. The differential counts punctured holomorphic disks in $\R\times ST^*\R^3$, with boundary components mapping to $\R\times \Lambda_K$ and with boundary punctures (one positive and arbitrarily many negative) asymptotic to Reeb chords. The coefficient ring of $\mathcal A_K$ is the group ring $\C[H_2(ST^*\R^3,\Lambda_K;\Z)]$. We make the following choices. 
\begin{itemize}
 \item For each Reeb chord $c$, choose a capping half-disk $v_{c}\colon D^{2}_{+}\to ST^{\ast}\R^{3}$, where $D^{2}_{+}\subset \C$ is the intersection of the unit disk in the complex plane and the upper half plane. We require that $v_{c}|_{D^{2}_{+}\cap\R}$ is a parametrization of $c$, where $\R\subset \C$ is the real line. Also, $v_{c}(D^{2}_{+}\cap \mathbb{S}^{1})\subset \Lambda_{K}$, where $\mathbb{S}^{1}\subset \C$ is the unit circle. Up to homotopy, two capping half-disks differ by an element in $\pi_2(ST^*\R^3,\Lambda_K)$. 
 \item We fix embedded curves $x$ and $p$ in $\Lambda_{K}$ whose homology classes generate $H_1(\Lambda_K;\Z)$ and with intersection number $x\cdot p=1$ as follows. We require $x$ to be null-homologous in $M_{K}\approx \R^3\setminus K$ and $p$ to be null-homologous in $L_{K}\approx S^1\times \R^2$. We also require the projection of $x$ to $K$ to be orientation-preserving. This fixes the classes of $x$ and $p$ in $H_1(\Lambda_K;\Z)$. We call $x$ the \emph{longitude} and $p$ the \emph{meridian}, respectively, and we sometimes identify them with the homology classes that they represent. 
 As we shall see, the formulas in Theorem \ref{T:Alex Aug} are still valid if we change the meridian curve by a  transformation of the form 
 \begin{equation} \label{change framing}
 (x,p) \mapsto (x , p + k x) 
 \end{equation}
 for some $k\in \Z$. We call this a \emph{change of framing}. 
\item Choose a splitting of the short exact sequence
\begin{equation} \label{splitting ses}
 \begin{tikzcd}
    0 \arrow{r} & H_2(ST^*\R^3;\Z) \arrow{r} & H_2(ST^*\R^3,\Lambda_K;\Z) \arrow{r} & H_1(\Lambda_K;\Z) \arrow{r} \arrow[bend right=18,dashrightarrow]{l}{} & 0
 \end{tikzcd},
\end{equation}
as indicated by the dashed line. The sequence starts with $0$ because the fundamental class of $\Lambda_K$ vanishes in $H_2(ST^*\R^3;\Z)$.%
\footnote{The image of the fundamental class of $\Lambda_K$ in $H_2(ST^*\R^3;\Z)\cong H_2(S^2;\Z)\cong \Z$ is given by the degree of the Gauss map $T_K\to S^2$, where $T_K$ is the boundary of a tubular neighborhood of $K$. This degree vanishes, since it is one half of the Euler characteristic of the torus $T_K$, by the Poincar\'e--Hopf theorem.}
%as explained in \cite[Section 4.9]{GuilleminPollack}.}
%\footnote{It is easier to start by showing vanishing for the unknot. This can be used to conclude vanishing for a general $K$, by isotoping the knot until it is braided around the unknot.}
Let $t\in H_2(ST^*\R^3;\Z)$ be the class of a generator, which we think of as a fiber of the $S^2$-bundle $ST^*\R^3\to \R^3$. This class is unique if we require the intersection with a section of the bundle to be $+1$. The splitting of the sequence can be thought of as a choice of two surfaces in $ST^*\R^3$, one having $x$ as its boundary and the other having $p$ as its boundary. Denote these surfaces by $\Sigma_x$ and $\Sigma_p$, respectively. Note that the splitting is well-defined up to adding integer multiples of $t$ to $\Sigma_x$ and $\Sigma_p$.
We also pick a section $s$ of the trivial $S^2$-bundle $ST^*\R^3 \to \R^3$ that is generic in the following sense. The graph of $s$ is disjoint from all Reeb chords $c$ and is transverse to all capping half-disks $v_c$, to the capping surfaces $\Sigma_x$ and $\Sigma_p$, and to $\Lambda_K$. Any class in $H_2(ST^*\R^3)$ is determined by its intersection with the graph of $s$. 
\end{itemize}

Once we make these choices, we can identify the group ring $\C[H_2(ST^*\R^3,\Lambda_K;\Z)]$ with the Laurent polynomial ring $R \coloneq \C[\lambda^{\pm1},\mu^{\pm1},Q^{\pm1}]$, with $\lambda = e^x$, $\mu = e^p$ and $Q = e^t$. 
% for the group ring variables corresponding to the homology generators indicated above. 
We describe how to compute the $R$-coefficient of the contribution from a holomorphic curve $u$, transverse to the section $s$, to the differential in $\mathcal A_K$, in accordance with the choices that we made. Some of the relevant information is sketched in Figure \ref{closed surface_fig}. 

Consider first the $\lambda$ and $\mu$ powers: use the capping disks associated to the asymptotic chords of $u$ to produce a closed disk $\overline u$. This closed disk represents an element in $H_2(ST^*\R^3,\Lambda_K;\Z)$. The boundary $\gamma$ of $\overline u$ gives a class $a [x] + b [p] \in H_1(\Lambda_K;\Z)$, where $a$ is the intersection number $\gamma \cdot p$ and $b$ is $x \cdot \gamma$. The $\lambda,\mu$ coefficients of the contribution of $u$ will then be $\lambda^a \mu^b$. 

\begin{remark} \label{rmk:derivatives}
We observing the following simple fact for future reference: 
$$\frac{d(\lambda^k)}{d x} =\lambda \frac{d(\lambda^k)}{d \lambda} = k \lambda^k.$$
%At certain points in the paper, we will need to something similar to replacing $\lambda^k$ with $k\lambda^k$ in Laurent polynomials associated to counts of holomorphic curves. On such occasions, we will achieve this by taking suitable (partial) derivatives. 
\end{remark}

Consider next the $Q$ power: 
the closed disk $\overline u$ gives an element of $H_2(ST^*\R^3,\Lambda_K;\Z)$, and the boundary $\gamma$ of $\overline u$ is homologous to the 1-dimensional cycle $\gamma'\coloneq a x + b p$. We pick $S(\gamma)$ and a map $S(\gamma) \to \Lambda_K$, such that $S(\gamma)$ is a 2-dimensional oriented surface with boundary, and the boundary of $S(\gamma)$ maps to $\gamma'-\gamma$. We then have a closed surface in $ST^*\R^3$ by concatenating $\overline u$ with the image of $S(\gamma)$ and with $a$ copies of $-\Sigma_x$ and $b$ copies of $-\Sigma_p$ (where the sign indicates reversal of orientation). The $Q$-power of $u$ in $\mathcal A_K$ is the intersection number of this closed surface with the image of the section $s$. Note that $[\Lambda_K] = 0\in H_2(ST^*\R^3;\Z)$ implies that this intersection number is independent of the choice of cobordism $S(\gamma)$, as different choices differ by a multiple of the fundamental class of $\Lambda_K$.

\begin{figure}%{r}{0.35\textwidth}
  \begin{center}
    \def\svgwidth{.5\textwidth}
    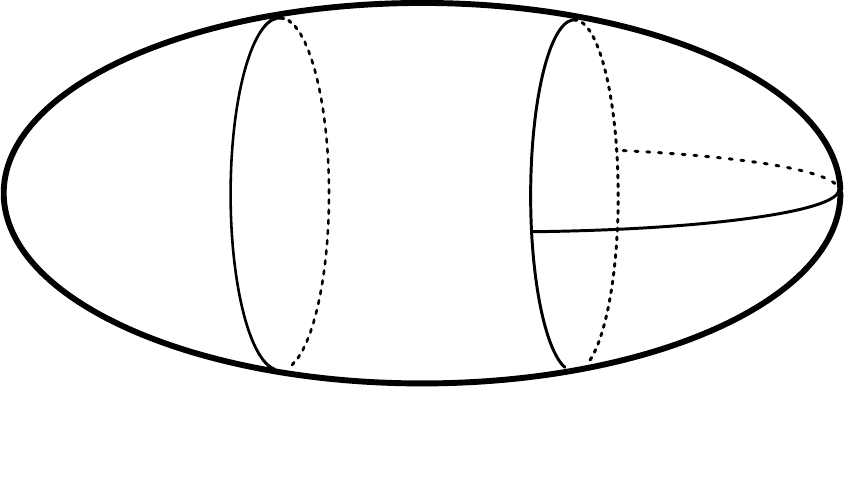
  \end{center}
  \caption{Completing $u$ into a closed surface to determine the $Q$-power}
  \label{closed surface_fig}
\end{figure} 

\begin{remark}
 The change of framing \eqref{change framing} above induces a change of variables in $R$ of the form 
 $(\lambda,\mu,Q)\mapsto (\lambda, \lambda^{k}\mu, Q)$, for some $k\in \Z$, and a change of splitting \eqref{splitting ses} induces $(\lambda,\mu,Q)\mapsto (\lambda Q^{l}, \mu Q^{m}, Q)$, for some $l,m\in \Z$. 
\end{remark}

For some steps in the derivation of the formula in Theorem \ref{T:Alex Aug}, it will be important to understand $Q$-powers in more detail. We have the following elementary result.
%We discuss this next, and summarize the discussion in Lemma \ref{compute Q} below. 

\begin{lemma} \label{compute Q}
 Given choices of capping half-disks and capping surfaces $\Sigma_x, \Sigma_p$, the exponent of $Q$ associated to a holomorphic curve $u$ is given by 
 $$
 \big(\overline u \cdot s)_{ST^*\R^3} + \big(\gamma(\overline u) \cdot \tau)_{\Lambda_K}
 $$
 for a suitable choice of 1-dimensional submanifold $\tau\subset \Lambda_K$ as above. Here, $\gamma(\overline u)$ is the boundary of the disk $\overline u$ as illustrated in Figure \ref{closed surface_fig}. Different choices of $\Sigma_x, \Sigma_p$ correspond bijectively to different choices of $\tau$. 
\end{lemma}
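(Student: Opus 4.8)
The plan is to unwind the definition of the $Q$-exponent and reduce the lemma to the identity $\Phi_{\Sigma_x,\Sigma_p}=\Psi_\tau$ isolated in the discussion above. By construction (see Figure \ref{closed surface_fig}), the exponent of $Q$ attached to a holomorphic curve $u$ is the intersection number with $s$ of the closed surface obtained by concatenating $\overline{u}$ with $\Sigma(\gamma)$, where $\gamma=\gamma(\overline{u})$; this splits as $(\overline{u}\cdot s)_{ST^*\R^3}+(\Sigma(\gamma)\cdot s)_{ST^*\R^3}$. The first summand is already the first term in the statement, so it remains to produce a $1$-submanifold $\tau\subset\Lambda_K$ with $(\Sigma(\gamma)\cdot s)_{ST^*\R^3}=(\gamma\cdot\tau)_{\Lambda_K}$ for all such $\gamma$. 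First I would note that a candidate $\tau$ exists: since $[\Lambda_K]=0$ in $H_2(ST^*\R^3;\Z)$ and $s$ meets $\Lambda_K$ transversely in a finite set, that set has vanishing signed count and hence bounds in the torus $\Lambda_K$.

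Next I would set up the two families of homomorphisms $\Phi_{\Sigma_x,\Sigma_p},\Psi_\tau\colon H_1(\Lambda_K\setminus s;\Z)\to\Z$ as in the text, checking that each is well-defined (additivity follows by taking $S(\gamma_1+\gamma_2)=S(\gamma_1)+S(\gamma_2)$) and that each forms a torsor over $H_1(\Lambda_K;\Z)\cong\Z^2$: replacing $\Sigma_x$ (resp.\ $\Sigma_p$) by its connected sum with an $S^2$-fibre changes $\Phi_{\Sigma_x,\Sigma_p}$ by the homomorphism $\gamma\mapsto-\gamma\cdot p$ (resp.\ $\gamma\mapsto\gamma\cdot x$), while replacing $\tau$ by $\tau\#x$ (resp.\ $\tau\#p$) changes $\Psi_\tau$ by $\gamma\mapsto\gamma\cdot x$ (resp.\ $\gamma\mapsto\gamma\cdot p$); these two $\Z^2$-actions have the same image, namely the subgroup generated by $\gamma\mapsto\gamma\cdot x$ and $\gamma\mapsto\gamma\cdot p$. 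Consequently it suffices to match one $\Phi_{\Sigma_x,\Sigma_p}$ with one $\Psi_\tau$, after which equivariance upgrades the match to the claimed bijection between choices of $(\Sigma_x,\Sigma_p)$ and choices of $\tau$.

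For the matching I would take, as in the text, capping surfaces with $\Sigma_x\cdot s=\Sigma_p\cdot s=0$ and a $\tau$ with $\tau\cdot x=\tau\cdot p=0$. Then the $-a\Sigma_x-b\Sigma_p$ part of $\Sigma(\gamma)$ contributes nothing, so $(\Sigma(\gamma)\cdot s)_{ST^*\R^3}=(S(\gamma)\cdot s)_{ST^*\R^3}$; since $S(\gamma)$ maps into $\Lambda_K$, all its intersections with $s$ lie on $\Lambda_K\cap s=\partial\tau$, so this equals $(S(\gamma)\cdot\partial\tau)_{\Lambda_K}$, which by the chain-level boundary-swap (Stokes) identity is $\pm(\partial S(\gamma)\cdot\tau)_{\Lambda_K}=\pm((\gamma'-\gamma)\cdot\tau)_{\Lambda_K}$ with $\gamma'=ax+bp$; as $\gamma'\cdot\tau=0$ by our choice of $\tau$, this collapses to $\mp\Psi_\tau(\gamma)$. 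Fixing orientation conventions (or, if needed, replacing $\tau$ by $-\tau$, which merely relabels the torsor) yields $\Phi_{\Sigma_x,\Sigma_p}=\Psi_\tau$, and hence the lemma. The main obstacle I anticipate is purely the orientation bookkeeping: making the linking-number/Stokes identity precise at the chain level, where $S(\gamma)$ and $\tau$ are chains rather than embedded submanifolds, and nailing down the sign so that the surviving term is genuinely $+\Psi_\tau$ and not $-\Psi_\tau$; everything else follows formally from the torsor structure above.
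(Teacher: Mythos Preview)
Your proposal is correct and follows essentially the same approach as the paper: the paper's proof is the discussion immediately preceding the lemma (the statement ends with \qed), and it proceeds exactly by establishing the torsor structures on $\{\Phi_{\Sigma_x,\Sigma_p}\}$ and $\{\Psi_\tau\}$, normalizing to $\Sigma_x\cdot s=\Sigma_p\cdot s=0$ and $\tau\cdot x=\tau\cdot p=0$, and then running the identity chain $(\Sigma(\gamma)\cdot s)_{ST^*\R^3}=(S(\gamma)\cdot s)_{ST^*\R^3}=(S(\gamma)\cdot(s\cap\Lambda_K))_{\Lambda_K}=(\gamma\cdot\tau)_{\Lambda_K}$. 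The only difference is cosmetic: where the paper invokes ``the usual argument showing that the linking number \dots\ can be computed by intersecting either submanifold with a submanifold bounding the other'' for the last equality, you spell out the Stokes/boundary-swap step and flag the sign; the paper absorbs this into the linking-number remark without further comment.
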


\begin{proof}
Consider first how the $Q$-power depends on the choices of capping surfaces $\Sigma_x, \Sigma_p$. These choices affect the intersection with the section $s$ of the surface $\Sigma(\gamma)$, which is the concatenation of the cobordism $S(\gamma)$ with $-a\Sigma_x - b\Sigma_p$ in Figure \ref{closed surface_fig}. Recall that $a = \gamma \cdot p$ and $b = x \cdot \gamma$. 
Note that $\Sigma(\gamma) \cdot s$, thought of as a function of $\gamma$, is well-defined on $H_1(\Lambda_K\setminus s;\Z)$. 
The capping surface $\Sigma_x$ can be changed by taking a connected sum with an $S^2$-fiber, and this changes $\Sigma(\gamma) \cdot s$ by subtracting $a$. Similarly, changing $\Sigma_p$ to $\Sigma_p \# S^2$ subtracts $b$ from $\Sigma(\gamma) \cdot s$. Therefore, we can think of $\Sigma(\gamma) \cdot s$ as a family of homomorphisms
$$
\Phi_{\Sigma_x,\Sigma_p}\colon H_1(\Lambda_K\setminus s;\Z) \to \Z
$$
such that, if we fix one element $\Phi_0$, we can obtain all other elements by adding homomorphisms of the form $\gamma \mapsto m \gamma\cdot x + n \gamma\cdot p$ for some $(m,n)\in \Z^2$. Since linear combinations $m [x] + n [p]$ denote arbitrary elements of $H_1(\Lambda_K;\Z)$, the collection of homomorphisms $\{\Phi_{\Sigma_x,\Sigma_p}\}$ can be thought of as a torsor over $H_1(\Lambda_K;\Z)$.

Let us now consider an alternative way of computing the powers of $Q$ associated to $u$, by expressing the intersection $\Sigma(\gamma) \cdot s$ differently. This will be used later in the paper, and goes as follows. 
Recall that $\Lambda_K$ is null-homologous in $ST^*\R^3$, and intersects $s$ transversely. Hence, this zero-dimensional intersection has signed count equal to zero, and is the boundary of a 1-dimensional submanifold $\tau\subset \Lambda_K$. 
Given two such choices $\tau, \tau'$, their difference defines an element in $H_1(\Lambda_K;\Z)$.
%On $H_1(\Lambda_K;\Z)$, $\tau$ is determined up to connected sum with $x$ or $p$. 
We can define another family of homomorphisms
$$
\Psi_{\tau}\colon H_1(\Lambda_K\setminus s;\Z) \to \Z
$$
by $\Psi_{\tau}(\gamma) = \gamma\cdot \tau$, where the intersection is now taken in $\Lambda_K$. Observe that $\Psi_{\tau \# x}(\gamma) = \Psi_{\tau}(\gamma) + \gamma\cdot x$ and $\Psi_{\tau \# p}(\gamma) = \Psi_{\tau}(\gamma) + \gamma\cdot p$, hence the collection $\{\Psi_{\tau}\}$ is also a torsor over $H_1(\Lambda_K;\Z)$. If we show that $\Phi_{\Sigma_x,\Sigma_p} = \Psi_{\tau}$ for one choice of $\Sigma_x,\Sigma_p$ and of $\tau$, then we can conclude that the collections $\{\Phi_{\Sigma_x,\Sigma_p}\}$ and $\{\Psi_{\tau}\}$ agree, and can thus replace an intersection $\Sigma(\gamma) \cdot s$ with $\gamma \cdot \tau$ for an appropriate $\tau$. 

Fix now a choice of $\Sigma_x,\Sigma_p$ such that $\Sigma_x \cdot s = \Sigma_ p \cdot s = 0$ (our freedom to take connected sums with $S^2$ guarantees the existence of such choices). Denote the corresponding homomorphism $\Phi_{\Sigma_x,\Sigma_p}$ by $\Phi_0$. Fix also a choice of $\tau$ such that $\tau \cdot x = \tau \cdot p = 0$ (which exists by our freedom to take connected sums with $x$ and $p$). Denote the corresponding $\Psi_{\tau}$ by $\Psi_0$. Then, indicating by a subscript where intersections take place, we have 
$$
\Phi_0(\gamma) = \big(\Sigma(\gamma) \cdot s\big)_{ST^*\R^3} = \big(S(\gamma) \cdot s\big)_{ST^*\R^3} = \big(S(\gamma)\cdot (s\cap \Lambda_K)\big)_{\Lambda_K} = \big(\gamma \cdot \tau\big)_{\Lambda_K} = \Psi_0(\gamma), 
$$
where the fourth identity follows from the usual argument showing that the linking number of two submanifolds of Euclidean space can be computed by intersecting either submanifold with a submanifold bounding the other. The lemma follows.
\end{proof}
\begin{remark}
 We can think of the intersection $\Sigma(\gamma)\cdot s$ as a version of a linking number between $\gamma$ and $s$ in $ST^*\R^3$. The discussion above shows that this can be replaced with $\gamma \cdot \tau$, which can be interpreted as a linking number between $\gamma$ and $s\cap \Lambda_K$ in $\Lambda_K$. 
\end{remark}

%{\color{blue} introduce fat point}

\subsection{The augmentation variety and polynomial} \label{sec:aug var poly}
In this section we discuss various aspects of augmentations.  
Consider, as in Section \ref{choices and coeffs}, $\mathcal A_K$ with coefficients in 
$R = \C[\lambda^{\pm 1}, \mu^{\pm 1}, Q^{\pm 1}]$. Recall our standing assumption that $\Lambda_K$ bounds no chords of negative degree. An \emph{augmentation} of $\A_K$ is a unital chain map of dg-algebras, 
$$
\epsilon  \colon \A_K \to \C,
$$
where $\C$ is supported in degree 0 and the chain map condition means that $\epsilon \circ \partial = 0$.%
\footnote{Since we write $\lambda = e^x$, we can heuristically think that the augmentation also assigns a value to $x$, namely $\log(\epsilon(\lambda))$ (which is only defined up to integer multiples of $2 \pi i$), and analogously for $\mu = e^p$ and $Q = e^t$. In Theorem \ref{T:Alex Aug}, it would be slightly more precise to replace the condition $(x,t)=(0,0)$ by $(\lambda,Q)=1$, or by $(e^x,e^t)=(1,1)$.} 
The linearized contact homology with respect to $\epsilon$, denoted $KCH^{\epsilon}_*(K;\C)$, is the homology of the $\C$-vector space $KCC^{\epsilon}_*(K;\C)$ generated by Reeb chords of $\Lambda_K$, with the following differential. Given a chord $c$, interpret its knot contact homology differential $\partial c$ as a polynomial in variables corresponding to the Reeb chords of $\Lambda_K$. To find the coefficient of a chord $b$ in the linearized differential of $c$, 
take the partial derivative of that polynomial with respect to the variable $b$, and apply $\epsilon$ to the polynomial resulting from the partial derivative. More formally, we can write
\begin{equation} \label{def:lin diff}
\partial^\epsilon(c) = \sum_{b \, \rm{ chords}} \epsilon\left( \frac{\partial (\partial c)}{\partial b}\right) b.
\end{equation}
Equivalently,  
for each non-constant monomial on chords in $\partial c$, 
sum the result of applying $\epsilon$ to the coefficient variables $\lambda$, $\mu$ and $Q$, and to all but one of the chords in the monomial, in all possible ways. If $c$ is a Reeb chord of degree $k$, we will sometimes write $\partial_k^\epsilon(c)$ instead of $\partial^\epsilon(c)$.

An exact Lagrangian filling $L$ of $\Lambda_K$, inside an exact symplectic filling $(X,d\alpha)$ of $ST^*\R^3$, induces a family of augmentations $\epsilon_L$ of $\A_K$ as follows. Associate to every degree 0 Reeb chord $a$ of $\Lambda_K$ the count of holomorphic disks in $X$, with boundary on $L$ and one positive puncture asymptotic to $a$. The choices of capping data that were made above (with the goal of defining $\mathcal A_K$ over the group ring $R=\mathbb C[H_2(ST^*\mathbb R^3,\Lambda_K;\Z)]$) enable us to define these counts of disks in $X$ with coefficients in $\C[H_2(X,L;\Z)]$. The values of the augmentations $\epsilon_L$ on elements of the coefficient ring $R$ must be compatible with the inclusion maps on homology:
\begin{equation} \label{diagram ses}
\begin{tikzcd}
    0 \arrow{r} & H_2(ST^*\R^3;\Z) \arrow{r}\arrow{d} & H_2(ST^*\R^3,\Lambda_K;\Z) \arrow{r}\arrow[d,"i_*"] & H_1(\Lambda_K;\Z) \arrow{r}\arrow{d} & 0 \\
    H_2(L;\Z) \arrow{r} & H_2(X;\Z) \arrow{r} & H_2(X,L;\Z) \arrow{r} & H_1(L;\Z) \arrow{r} & 0
\end{tikzcd}. 
\end{equation}
Specifically, the restrictions of the augmentations to $H_2(ST^*\R^3,\Lambda_K;\Z)$ must factor through the map $i_*$ in the diagram above. This is illustrated in the following example. 

\begin{example} \label{LK and MK}
For every knot $K$, the conormal Lagrangian $L_K$ in $T^*\R^3$ is an exact Lagrangian filling of $\Lambda_K$. The meridian $p$ is null-homologous in $L_K$ and the generator $t\in H_2(ST^* \R^3;\Z)$ vanishes in $H_2(T^* \R^3;\Z)= 0$. There is an associated 1-parameter family of augmentations, that we denote by $\epsilon_{L_K}$, assigning the value 1 to both $\mu$ and $Q$, and assigning to $\lambda$ an arbitrary value in $\C^*$. 
Similarly, the exact Lagrangian filling $M_K$, obtained from clean intersection surgery on $L_K$ and $\R^3$, on which the longitude $x$ is null-homologous, gives a family of augmentations $\epsilon_{M_K}$, assigning 1 to $\lambda$ and $Q$ and an arbitrary element in $\C^*$ to $\mu$. 
\end{example}

%{\color{blue} it seems that the next result also holds for a large surgery parameter, since there isn't much room for breaking for a 1-parameter family of disks with one puncture mapping to a chord of degree 0 and boundary on a 1-parameter family of exact Lagrangians. There can be breakings of negative index curves in the symplectization -1 + 1 = 0. I think we should avoid this discussion in the text.}

\begin{lemma} \label{epsilon LK v epsilon MK}
The augmentation $\epsilon_{L_K}$ with $\lambda=1$ is the same as the augmentation $\epsilon_{M_K}$ with $\mu=1$, for $M_K$ obtained from surgery on $L_K\cup \R^3$ with a small surgery parameter. 
\end{lemma}
\begin{proof}
Since $M_K$ is obtained by surgery on $L_K$ and $\R^3$, we can assume that $M_K$ is very close to the union $L_K\cup\R^3$. For such $M_K$, augmentation disks correspond to either disks with boundary on $L_{K}$ or disks with switching boundary conditions on $L_K\cup \R^3$, as in \cite{CELN}. In \cite{AENV}, the moduli space of disks for small smoothings near a rigid disk with switching boundary condition was described, see also \cite[Lemma 4.1]{ED-RT} for a more detailed description. The result is that a corner switching from $L_{K}$ to $\R^{3}$ has a unique smoothing, and a corner from $\R^{3}$ to $L_{K}$ has two smoothings with boundaries that differ by the meridian. This means that 
\begin{equation} \label{e MK from e LK}
\epsilon_{M_{K}}=\epsilon_{L_{K}} + \sum_{k=1}^{m}\pm (1-\mu)^{k}\phi^{k}_{L_{K}\cup\R^{3}},
\end{equation}
where $\phi_{L_{K}\cup\R^{3}}^{k}$ is a count of rigid disks with switching boundary conditions in $L_{K}\cup\R^{3}$ and $2k$ corners. 
Therefore, at $\mu=1=\lambda$ we have $\epsilon_{M_K}(a_{i})=\epsilon_{L_K}(a_{i})$ as claimed. 
\end{proof}

\begin{remark} \label{remark: braided model DGA}
Although not necessary for this article, it is sometimes useful to know that for any knot $K$ there is a model for $\Lambda_K$ and its DGA $\mathcal A_K$ for which the augmentations induced by $L_{K}$ vanish on all Reeb chords of degree 0. To this end, we braid $K$ with $n$ strands around the round unknot $U$. Then, as explained in \cite{KCH}, the Legendrian $\Lambda_K$ has a natural $n$-to-one cover of $\Lambda_U$. There are no Reeb chords in negative degrees for $\Lambda_K$, and there are $n(n-1)$ Reeb chords of degree zero, that we denote $a_{i}$. As usual, we write $\boldsymbol{a}=(a_{i})$. 

Let us now see why the augmentations induced by $L_K$ vanish on degree zero chords. This is trivial for the unknot $U$, since $\Lambda_U$ does not have any Reeb chords of degree zero. Note also that, since $L_{U}$ is exact, there are no unpunctured holomorphic disks with boundary on $L_{U}$. For a general knot $K$, braid it closer and closer to $U$. As $K\to U$, the limit of an augmentation disk on $L_{K}$ is a holomorphic disk on $L_{U}$ with flow trees attached. This is analogous to the description of the differential in the DGA of $\Lambda_K$ in terms of multiscale flow trees, see \cite{KCH}. As $K$ approaches $U$, $L_{K}$ limits to a cover of $L_{U}$ without singularities, i.e., as a local diffeomorphism. Therefore there are no flow trees corresponding to disks with one positive puncture at a degree $0$ chord, and also no disks with flow trees attached since there are no disks on $L_{U}$. It follows that the augmentation $\epsilon_{L_{K}}$ is trivial on the $a_{i}$.

Note that Lemma \ref{epsilon LK v epsilon MK} implies that in this model the augmentation induced by $M_{K}$ at $(\lambda,\mu,Q)=(1,1,1)$ also vanishes on all degree 0 Reeb chords. The values of the augmentations $\epsilon_{M_K}$ on chords of degree 0 are written in \cite[Definition 4.6]{NgFramed}, using slightly different conventions than the ones we use. For relations between conventions used in several papers on knot contact homology, see \cite[page 527]{NgTopological}).  
\end{remark}

It is useful to think of the collection of all augmentations of $\mathcal A_K$ geometrically. To this end, we use the following notation. Label the Reeb chords of degree 0  by $a_i$ and denote the ordered set of such chords by $\boldsymbol{a} = (a_{i})$. Write $\C^{\boldsymbol{a}}$ for a vector space with basis $\boldsymbol{a}$ and write $\tilde R \coloneq R[\boldsymbol{a}]$ for the polynomial ring in variables $\boldsymbol{a}$. Let $\tilde I_K \subset \tilde R$ be the ideal generated by the differentials of chords of degree 1 in $\mathcal A_K$. 
We will consider the following geometric objects in connection with augmentations:

\begin{definition}
$\quad$
\begin{itemize}
\item The \emph{full augmentation scheme} of $K$ is the affine scheme
$$
\mathbb{\tilde V}_K \coloneq \Spec \left( \tilde R /\tilde I_K \right).
$$
\item The \emph{full augmentation variety} is the algebraic set 
$$
\tilde V_K \coloneq  V(\tilde I_K) = \left\{(\epsilon(\lambda),\epsilon(\mu),\epsilon(Q),\epsilon(\boldsymbol{a})) \in (\C^*)^3 \times \C^{\boldsymbol{a}} \, |\, \text{$\epsilon$ is an augmentation} \right\}.
$$
This is the set of closed points in $\mathbb{\tilde V}_K$.
\item The \emph{augmentation variety} $V_K$ of $K$ is the union of maximal-dimensional components of the Zariski-closure of the set
$$
\left\{(\epsilon(\lambda),\epsilon(\mu),\epsilon(Q))\in (\C^*)^3 \,|\, \epsilon \text{ is an augmentation for }\A_K\right\} = \pi(\tilde V_K),
$$
where $\pi\colon (\C^*)^3 \times \C^{\boldsymbol{a}} \to (\C^*)^3$ is the projection that forgets the Reeb chord variables $a_{i}$.
\item Corollary \ref{Aug exists} below shows that $I(V_K)$ is a principal ideal. A generating element of $I(V_K)$ is called an \emph{augmentation polynomial} of $K$, and denoted $\Aug_K(\lambda,\mu,Q)$.
\end{itemize}
\end{definition}

\begin{remark}
The augmentation polynomial $\Aug_K$ can often be computed using elimination theory. See Section \ref{elimination} for details. 
\end{remark}

\begin{example}
 The augmentation polynomial of the unknot is $1-\lambda-\mu+\lambda\mu Q$, see \cite{AENV}.
\end{example}

We next consider first order properties of the geometric spaces of augmentations. We start with a general fact, see \cite[Remark 4.8]{EkholmNgHigherGenus}. 
\begin{lemma} \label{KCH0 tangent to fiber}
Let $\epsilon$ be an augmentation of $\mathcal A_K$, and denote the corresponding closed point in $\mathbb{\tilde V}_K$ also by $\epsilon$. 
The Zariski cotangent space to $\pi^{-1}\left(\pi(\epsilon)\right) \subset \mathbb{\tilde V}_K$ at the point $\epsilon$ is isomorphic to the degree zero $\epsilon$-linearized homology $KCH_0^\epsilon(K;\C)$. 
\end{lemma}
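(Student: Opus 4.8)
The plan is to identify both sides of the claimed isomorphism with the same concrete linear-algebraic object attached to the linearization of the defining ideal $\tilde I_K$ at the point $\epsilon$. First I would unwind the definition of the Zariski tangent space. Writing $\mathbb{\tilde V}_K = \Spec(\tilde R/\tilde I_K)$ with $\tilde R = R[\boldsymbol{a}]$, and recalling that $\pi$ forgets the $\boldsymbol{a}$-coordinates, the fiber $\pi^{-1}(\pi(\epsilon))$ is cut out inside the affine space $\C^{\boldsymbol{a}}$ (with the coefficient variables $\lambda,\mu,Q$ frozen at their $\epsilon$-values) by the polynomials $\epsilon_R(\partial c)$, where $c$ ranges over Reeb chords of degree $1$ and $\epsilon_R$ denotes applying $\epsilon$ only to the coefficient variables. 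The Zariski tangent space at $\epsilon$ is then the kernel of the Jacobian of these polynomials, evaluated at $\epsilon$: a vector $v = (v_i) \in \C^{\boldsymbol{a}}$ lies in it iff $\sum_i v_i\, \epsilon\!\left(\partial_{a_i}(\partial c)\right) = 0$ for every degree $1$ chord $c$.

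Next I would match this with the linearized differential. By the description of $KCC^\epsilon_*(K;\C)$ given just before the definition of the augmentation scheme, the coefficient of a degree $0$ chord $a_i$ in the $\epsilon$-linearized differential $d^\epsilon c$ of a degree $1$ chord $c$ is precisely $\epsilon\!\left(\partial_{a_i}(\partial c)\right)$. Hence the condition that $v\in\C^{\boldsymbol{a}}$ be a Zariski tangent vector to the fiber is exactly the condition that $\langle v, d^\epsilon c\rangle = 0$ for all degree $1$ chords $c$, i.e. that $v$, viewed as an element of the dual of the degree $1$ part $KCC^\epsilon_1$, annihilate the image of $d^\epsilon\colon KCC_1^\epsilon \to KCC_0^\epsilon$ — equivalently, that $v \in KCC_0^\epsilon / \im(d^\epsilon|_{\deg 1})$ after dualizing. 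Here I would use that $KCH_0^\epsilon$ has no contribution from a differential out of degree $0$, since there are no Reeb chords of negative degree (knot contact homology is supported in non-negative degrees), so $KCH_0^\epsilon(K;\C) = \coker\!\left(d^\epsilon\colon KCC_1^\epsilon \to KCC_0^\epsilon\right)$.

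The remaining point is a duality/finite-dimensionality bookkeeping step: the fiber lives in the affine space with coordinates the chord variables $a_i$, so its tangent space is naturally a subspace of $\C^{\boldsymbol{a}} \cong KCC_0^\epsilon$ (not of its dual), and the linearized differential relations $\epsilon_R(\partial c) = 0$ are linear functionals on $\C^{\boldsymbol{a}}$ whose common kernel is $\left(\im d^\epsilon|_{\deg 1}\right)^{\perp}$; since $\C^{\boldsymbol a}$ is finite-dimensional, $\left(\im d^\epsilon|_{\deg 1}\right)^{\perp}$ is canonically isomorphic (via the pairing, or after a non-canonical choice of splitting) to $\coker\!\left(d^\epsilon|_{\deg 1}\right)^{\ast} \cong \coker\!\left(d^\epsilon|_{\deg 1}\right) = KCH_0^\epsilon(K;\C)$. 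I would remark that, more invariantly, the relations are the entries of the linearization of $\tilde I_K$, which compute $T_\epsilon$ of the scheme, and the bilinear pairing between chords of degree $1$ and degree $0$ induced by the differential is exactly the one defining the linearized complex; this is the content of \cite[Remark 4.8]{EkholmNgHigherGenus}.

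The main obstacle I anticipate is purely expository rather than mathematical: being careful about which vector space is the "primal" one and which is the "dual" — the tangent space to the fiber naturally sits inside the chord-coordinate space $\C^{\boldsymbol{a}}$, while the linearized differential $d^\epsilon$ is usually written as a map \emph{from} the span of degree $1$ chords \emph{to} the span of degree $0$ chords, so one must transpose and identify the cokernel of $d^\epsilon$ in degree $0$ with the annihilator of its image. One must also check that no degree $(-1)$ chords exist so that $KCH_0^\epsilon$ really is the full cokernel and not a subquotient; this is immediate from the grading conventions for knot contact homology. Beyond these signposts the argument is a direct unwinding of definitions.
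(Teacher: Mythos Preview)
Your proposal is correct and follows essentially the same approach as the paper's proof: both compute the Zariski tangent space to the fiber as the common kernel of the differentials $d_\epsilon\Phi_j = \sum_i \partial_{a_i}(\partial b_j)|_\epsilon\, da_i$, observe that these covectors are dual to the images $\partial_1^\epsilon(b_j)\in\C^{\boldsymbol a}$ of the linearized differential, and then identify $\bigcap_j\ker(d_\epsilon\Phi_j)$ with $\C^{\boldsymbol a}/\im\partial_1^\epsilon = KCH_0^\epsilon(K;\C)$ using that the complex is supported in non-negative degrees. Your treatment of the duality bookkeeping is more explicit than the paper's, but the argument is the same.
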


\begin{proof}
Recall that we assume that $KCC_*$ is supported in non-negative degrees.
% 0, 1 and 2, as in \cite{KCH}. 
Denote the set of Reeb chords of degree 0 by $\boldsymbol{a} = (a_i)$ and the set of chords of degree 1 by $\boldsymbol{b} = (b_j)$. 
%Denote by $\C^{\boldsymbol{a}}$ and $\C^{\boldsymbol{b}}$ the vector spaces with bases $\boldsymbol{a}$ and $\boldsymbol{b}$, respectively. 
Write $\Phi_{j}$ for the contact homology differential $\partial b_{j}$, viewed as a polynomial in  $\lambda^{\pm 1}$, $\mu^{\pm 1}$, $Q^{\pm 1}$, and $a_{i}$. Consider the covector 
$$
  d_\epsilon\Phi_{j} = \sum_{i}\left.\frac{\partial\Phi_{j}}{\partial a_{i}}\right|_{\epsilon}da_{i} \in \Span_\C\left(\{da_i\}\right)
$$
and let $d\Phi^\epsilon$ denote $\Span_\C\left(\{d_\epsilon\Phi_{j}\}\right)$. 
%Using \cite[Theorem II.1.3.1]{Shafarevich1}, 
We can identify the Zariski cotangent space to the fiber $\pi^{-1}\left(\pi(\epsilon)\right)$ at $\epsilon\in\C^{\boldsymbol{a}}$ with the quotient 
\begin{equation} \label{eq:cotangent space}
\Span_\C\left(\{da_i\}\right)/ d\Phi^\epsilon. 
\end{equation}

On the other hand, the $\epsilon$-linearized contact homology differential of $b_{j}$, defined in \eqref{def:lin diff} above, can be written as 
\begin{equation} \label{eq:lin diff}
\Psi_{j}^\epsilon = \partial_1^\epsilon(b_{j}) = \sum_{i}\left.\frac{\partial\Phi_{j}}{\partial a_{i}}\right|_{\epsilon} a_{i} \in \Span_\C\left(\{a_i\}\right) = \C^{\boldsymbol{a}}.
\end{equation}
Writing $\Psi^\epsilon = \im \partial_1^\epsilon = \Span_\C\left(\{\Psi_{j}^\epsilon\}\right) \subset \C^{\boldsymbol{a}}$, we see that 
$$
KCH_0^\epsilon(K;\C) = \C^{\boldsymbol{a}} / \Psi^\epsilon
$$
can be naturally identified with \eqref{eq:cotangent space} above, replacing the basis vectors $a_i$ with the $da_i$. 
\end{proof}

It will be useful below to have a similar description of the whole cotangent space to a point in $\mathbb{\tilde V}_K$, instead of just the cotangent spaces to the fibers of the projection $\pi$. To this end, we define the \emph{full linearized contact homology} $\widehat{KCH}_*^\epsilon(K;\C)$ as the  
homology of the chain complex obtained by linearizing the knot contact homology differential of $K$ also with respect to the coefficients $\lambda$, $\mu$, and $Q$. More precisely, and analogously to \eqref{eq:lin diff}, the $\epsilon$-linearized differential of a degree 1 chord $b_{j}$ is the vector 
%dual vector in $\C^{3}\oplus\C^{\boldsymbol{a}}$ of the following co-vector (analogous to \eqref{diff a}):
\begin{equation} \label{diff tot}
\widehat \partial_1^\epsilon(b_{j}) = \left.\frac{\partial\Phi_{j}}{\partial \lambda}\right|_{\epsilon} \lambda+\left.\frac{\partial\Phi_{j}}{\partial \mu}\right|_{\epsilon} \mu+\left.\frac{\partial\Phi_{j}}{\partial Q}\right|_{\epsilon} Q + \sum_{j}\left.\frac{\partial\Phi_{j}}{\partial a_{i}}\right|_{\epsilon} a_{i} \in \C^{3}\oplus\C^{\boldsymbol{a}}.
\end{equation}

%\begin{equation} \label{diff tot}
%\widehat d_\epsilon\Phi_{j} = \left.\frac{\partial\Phi_{j}}{\partial \lambda}\right|_{\epsilon}d\lambda+\left.\frac{\partial\Phi_{j}}{\partial \mu}\right|_{\epsilon}d\mu+\left.\frac{\partial\Phi_{j}}{\partial Q}\right|_{\epsilon}dQ + \sum_{j}\left.\frac{\partial\Phi_{j}}{\partial a_{i}}\right|_{\epsilon}da_{i} \in \C^{3}\oplus\C^{\boldsymbol{a}}
%\end{equation}

We then have the following:
\begin{lemma} \label{KCH0 tangent}
Let $\epsilon$ be an augmentation of $\mathcal A_K$. The Zariski cotangent space to $\mathbb{\tilde V}_K$ at $\epsilon$
is isomorphic to $\widehat{KCH}_0^\epsilon(K;\C)$. 
\end{lemma}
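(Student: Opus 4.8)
The plan is to mirror the proof of Lemma~\ref{KCH0 tangent to fiber}, but now keep track of the coefficient variables $\lambda,\mu,Q$ as well. As before, we may assume the knot contact homology DGA is supported in degrees $0$, $1$, $2$, with degree-$0$ chords $\boldsymbol a = (a_i)$ and degree-$1$ chords $\boldsymbol b = (b_j)$, and we write $\Phi_j = \partial b_j \in \tilde R$. By definition, $\mathbb{\tilde V}_K = \Spec(\tilde R/\tilde I_K)$ with $\tilde I_K = (\Phi_j)_j$, the ideal generated by the differentials of the degree-$1$ chords (here we use that $\partial$ vanishes on degree-$0$ chords for degree reasons, and the differentials of degree-$2$ chords land in the ideal generated by the $\Phi_j$ after a change of generators, or contribute nothing new at the linearized level since their linearizations factor through those of the $\Phi_j$; this is the one point to state carefully).

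First I would recall that for an affine scheme $\Spec(\tilde R/\tilde I_K)$ and a closed point $\epsilon$, the Zariski tangent space is the annihilator in the cotangent space $\mathfrak m_\epsilon/\mathfrak m_\epsilon^2$ of the image of $\tilde I_K$; concretely, choosing affine coordinates $(\lambda,\mu,Q,a_i)$ on the ambient $(\C^\ast)^3\times\C^{\boldsymbol a}$, the tangent space at $\epsilon$ is
\[
T_\epsilon \mathbb{\tilde V}_K \;=\; \bigcap_j \ker\bigl( \widehat d\Phi_j \bigr),
\]
where $\widehat d\Phi_j$ is the differential displayed in \eqref{diff tot}, viewed as a functional on the tangent space $\C^3 \oplus \C^{\boldsymbol a}$ of the ambient space. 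This is the direct generalization of \eqref{diff a}; the only change is that we no longer restrict to the $a_i$-directions.

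Next I would identify this intersection of kernels with $\widehat{KCH}_0^\epsilon(K;\C)$. By the definition of the full linearized contact homology, the functional $\widehat d\Phi_j$ is precisely the co-vector dual to the full linearized differential $\widehat\Psi_j^\epsilon = \widehat\partial_1^\epsilon(b_j) \in \C^3\oplus\C^{\boldsymbol a}$. Writing $\widehat\Psi^\epsilon = \im \widehat\partial_1^\epsilon \subset \C^3\oplus\C^{\boldsymbol a}$ for the span of the $\widehat\Psi_j^\epsilon$, we get
\[
\bigcap_j \ker\bigl(\widehat d\Phi_j\bigr) \;\cong\; \bigl(\C^3\oplus\C^{\boldsymbol a}\bigr)\big/\widehat\Psi^\epsilon \;=\; \widehat{KCH}_0^\epsilon(K;\C),
\]
where the final equality uses that the full linearized complex is supported in non-negative degrees, so that in degree $0$ there is no incoming differential and the homology is just the cokernel of $\widehat\partial_1^\epsilon$. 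This is verbatim the argument in Lemma~\ref{KCH0 tangent to fiber} with $\C^{\boldsymbol a}$ replaced by $\C^3\oplus\C^{\boldsymbol a}$ and $\Psi^\epsilon$ by $\widehat\Psi^\epsilon$.

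The main obstacle, such as it is, is bookkeeping rather than depth: one must check that $\tilde I_K$ really is generated (as an ideal, up to radical / up to the change of generators ensuring support in degrees $0,1,2$) by exactly the $\Phi_j = \partial b_j$, so that the Zariski tangent space is cut out by exactly these functionals and no others, and that the identification of $\widehat d\Phi_j$ with the dual of $\widehat\partial_1^\epsilon(b_j)$ is compatible with the chosen affine coordinates (in particular that derivatives with respect to $\lambda,\mu,Q$ — which are invertible coordinates on $(\C^\ast)^3$ — are the correct ones, there being no subtlety since $d\lambda,d\mu,dQ$ trivialize the cotangent space of the torus). Once these are in place the proof is a one-line transcription of the previous lemma's proof, so I would keep it brief and simply point to Lemma~\ref{KCH0 tangent to fiber}.
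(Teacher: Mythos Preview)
Your proposal is correct and follows exactly the approach the paper takes: the paper's entire proof is the single line ``Identical to the proof of Lemma~\ref{KCH0 tangent to fiber},'' and your write-up is precisely that transcription with $\C^{\boldsymbol a}$ replaced by $\C^3\oplus\C^{\boldsymbol a}$. One small point: your worry about whether $\tilde I_K$ is generated by the $\Phi_j=\partial b_j$ is a non-issue, since this is the paper's \emph{definition} of $\tilde I_K$ (see the line just before the definition of the full augmentation scheme); differentials of degree-$2$ chords land in degree $1$ and are annihilated by any augmentation automatically, so they contribute no equations.
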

\begin{proof}
Identical to the proof of Lemma \ref{KCH0 tangent to fiber}.	
\end{proof}

\section{Analysis of boundaries of moduli spaces} \label{sec:main proof}
%{From knot contact homology to the Alexander polynomial}

In this section, we begin by discussing gluing theorems associated to degenerations of annuli that do not have counterparts for disks. We then identify boundaries of moduli spaces that will be used in the proof of Theorems \ref{t:linearizedhomology} and \ref{T:Alex Aug} in Section \ref{sec:LCH computations}. There, the key point is to equate quantities that count ends of one-dimensional moduli spaces. To control compactifications of moduli spaces there are two ingredients: compactness and gluing. Compactness controls the possible limits of a sequence holomorphic curves. Here we use Gromov compactness and SFT compactness. Gluing produces a neighborhood in the moduli space of a broken configuration in such a limit. Here we use Floer gluing, which can be described as an infinite dimensional version of Newton iteration. We consider the results on disks as standard and focus on two gluing theorems that arise for annuli.

\subsection{Gluing for once punctured annuli}
 The first case of degeneration and gluing for annuli was called {\em elliptic boundary splitting} in \cite{EkholmShende}, and is depicted in Figure \ref{elliptic_bdry_fig}. It corresponds to the limit where the modulus of the annulus converges to infinity, by having one of its boundary loops shrink to a point, and can be described as follows. 
Use coordinates $(z_{1},z_{2},z_{3})=(x_{1}+iy_{1},x_{2}+iy_{2},x_{3}+iy_{3})$ on $\C^{3}$. The Lagrangian boundary condition corresponds locally to $\R^{3}=\{y_{1}=y_{2}=y_{3}=0\}$. Take the vector field $s=\partial_{y_{3}}$, and the 4-chain  
\[ 
V= \{(z_{1},z_{2},z_{3})\colon y_{1}=y_{2}=0,y_{3}\ge 0\},
\] 
oriented so that it induces the positive orientation on $\R^{3}$.

\begin{figure}%{r}{0.35\textwidth}
	\begin{center} \hspace{.3cm}
		\def\svgwidth{.55\textwidth}
		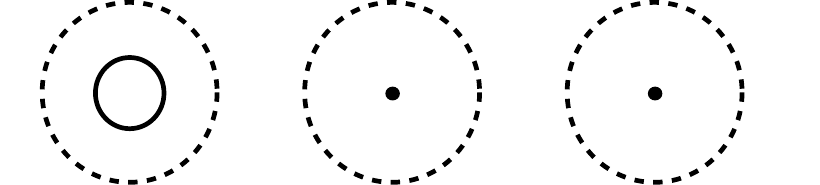
	\end{center}
	\caption{Elliptic boundary splitting}
	\label{elliptic_bdry_fig}
\end{figure}

Consider the family of maps
$u_{\rho}\colon [-\rho,\infty)\times S^{1}\to\C^{3}$, depending on a parameter $\rho>0$, given by
\begin{equation}\label{eq:imaginarynode} 
u_\rho(\zeta)=\left(e^{\zeta}+e^{-\zeta-2\rho},-i(e^{\zeta}-e^{-\zeta-2\rho}),0\right),
\end{equation}
where the domain is thought of as a subset of the cylinder $\C/2\pi i\Z$. Note that $u_\rho(\{-\rho\}\times S^{1})$ is a circle of radius ${2}e^{-\rho}$ in the $x_{1}x_{2}$-plane and that the interior of $u_\rho$ is disjoint from the $4$-chain. 
Consider also the family 
$v_{t}\colon[-\infty,\infty)\times S^{1}/(\{-\infty\}\times S^1)\to\C^{3}$, depending on a parameter $t\geq 0$, given by
\[ 
v_{t}(\zeta)= (e^{\zeta},-ie^{\zeta},it), \quad t\in(-\epsilon,\epsilon),
\]
where we think of the domain as a smooth disk. The intersection $v_t\cap V$ is the point $(0,0,it)$. As $\rho\to\infty$ the map $u_\rho$ converges to $v_{0}$, which corresponds to when the family $v_{t}$ crosses the Lagrangian $\R^3$. This allows us to identify the limit nodal curve $u_{\infty}$ with the smooth curve $v_{0}$. We will sometimes write $u_t$ instead of $u_\rho$, under the identification $t = 1/\rho$.

Boundaries of a generic 1-parameter family of annuli in which the conformal ratio is going to infinity are locally modelled on the above family, in the following sense. Let $a_t$ for $t \in [0, \epsilon)$ and $b_t$ for $t \in (-\epsilon, \epsilon)$ be two families of maps to $(T^{\ast}\R^{3}, L)$, where $L$ is a Lagrangian (in practice $L=\R^{3}$, below), such that 
$a_0$ is a map from a domain with an elliptic node, with the node mapping
to $p \in L$, and $b_0$ is the corresponding map from the normalization of $a_0$. 

Then the pair $(a_t, b_t)$ is a {\em standard elliptic degeneration} if there exists a neighborhood $U$ around $p$ that can be identified with a neighborhood of the origin in $\C^{3}$ with the Lagrangian corresponding to $\R^{3}$ and an explicit 4-chain such that there is a diffeomorphism $U\to \C^{3}$ that respects the Lagrangian and the 4-chain and carries the intersections of the curves in the family with $U$ to the curves in the model family $(u_t, v_t)$ above. If instead $a_t$ is a nodal family for $t \in (-\epsilon, 0]$, we again say the pair is a standard elliptic degeneration if there is an identification as above with time reversed.     

Let $L\subset T^{\ast}\R^{3}$ be a Lagrangian submanifold asymptotic to a Legendrian $\Lambda\subset ST^{\ast}\R^{3}$ and let $c$ be a degree one Reeb chord of $\Lambda$. Write $A^{\circ}$ and $D^{\circ}$ for an annulus and a disk, respectively, with a boundary puncture.

\begin{lemma}\label{l:standardnode}
Let $u_{t}\colon (A_{t}^{\circ},\partial A_{t}^{\circ})\to (T^{\ast}\R^{3},\R^{3}\cup L)$, $t\in(0,\epsilon]$ be a transversely cut out 1-parameter family of annuli with a positive boundary puncture at $c$, with limit $u_0=\lim_{t\to 0}u_t$ a map with an elliptic node. Let $v_{t}\colon (D^{\circ},\partial D^{\circ})\to(T^{\ast}\R^{3},L)$ be a 1-parameter family of disks with a positive boundary puncture at $c$ and intersecting $V$ transversely at an interior marked point. If the normalization of $u_{0}$ equals $v_{0}$, then the pair $(u_{t},v_{t})$ is a standard elliptic degeneration. 

It follows in particular that in the transverse case there is a natural bijection between those boundaries of 1-parameter families of annuli corresponding to maps with an elliptic node, on the one hand, and instances in moduli spaces of disks where the maps take an interior point to $\R^3$, on the other hand (see Figure \ref{elliptic_bdry_fig}).   
\end{lemma}

\begin{proof}
This is \cite[Lemma 4.16]{EkholmShende}. We recall the proof, which is a standard application of Floer gluing.
The normalization of $u_0$ equals $v_0$, which intersects $\R^3$ at an interior point. Consider half-cylinder coordinates $[0,\infty)\times S^1$ around the preimage of the intersection point. We construct a pre-glued annulus by cutting the neighborhoods at large $\rho_0\in[0.\infty)$ and interpolating to a cut-off version of the standard model of the elliptic node \eqref{eq:imaginarynode}. At this pre-glued curve, we invert the differential of the Cauchy--Riemann operator on the complement of the linearized variation of the 1-parameter family and then establish the quadratic estimate needed for Floer’s Newton iteration argument. Then, for sufficiently large $\rho>0$, there are uniquely determined solutions in a neighborhood of the pre-glued annuli and arbitrarily near to it in the functional norm that we use (which as in \cite{EkholmShende} we take to be the Sobolev norm of maps with two derivatives in $L^2$) and thus the solution will be $C^1$-close to the pre-glued curve.

This gives the desired gluing result for the moduli spaces involved, i.e., the 1-dimensional space of smooth annuli near a nodal annulus is obtained by gluing that nodal annulus. Since the pre-glued family is clearly conjugate to the standard family and the glued family is arbitrarily $C^1$-close, it is also conjugate to the standard family.
\end{proof}

%
% PREVIOUS VERSION OF LAST PARAGRAPH IN PROOF ABOVE  
%
%This gives the desired gluing result for the moduli spaces involved, i.e., the 1-dimensional moduli space of the nodal annulus is obtained by gluing from at the intersection instance in the 1-parameter family of the smooth moduli space. Since the pre-glued family is clearly conjugate to the standard family and the glued family is arbitrarily $C^1$-close, it is also conjugate to the standard family.
%

The second degeneration concerns instants when annuli without positive punctures split off of 1-parameter families of annuli with one positive puncture. This was called {\em hyperbolic boundary splitting} in \cite{EkholmShende}, see the bottom arrow in Figure \ref{annulus_split_fig}. The limit configuration consists of a rigid annulus and a disk intersecting it at a boundary point. Call such a configuration a {\em split annulus}. The annulus component of a split annulus may be multiply covered, however it inherits a boundary marked point from the punctured disk that breaks the rotational symmetry of the multiple cover. With these marked points the gluing becomes standard: one disk family is glued to one annulus with boundary marked point. 

More specifically, let $\mathcal M^*(c)$ denote the moduli space of disks with one boundary puncture at 1 mapping to a Reeb hord $c$ of index 1 and a boundary marked point at $-1$. The virtual dimension of this space is 2. Let $\mathcal M_{\rm an}^*(L)$ denote the moduli space of annuli with a boundary marked point at $1$ (in the boundary component mapping to $L$). Its virtual dimension is 1. We observed in Section \ref{sec:flow lines and holo curves} that, in our setting, we can assume both moduli spaces to be transversely cut out. The moduli space of split annuli is $(\ev_{1} \times \ev_{-1})^{-1}(\Delta_L)$, where 
$$
\ev_{1} \times \ev_{-1} \colon \mathcal M_{\rm an}^*(L) \times \mathcal M^*(c) \to L\times L
$$
is the product of evaluation maps and $\Delta_L \subset L\times L$ is the diagonal. Adapting arguments from e.g.~\cite[Section 4.4.1]{EkholmShende} or \cite{McDuffSalamon2}, one shows that $\ev_{1} \times \ev_{-1}$ can be assumed transverse to $\Delta_L$. As illustrated by the bottom arrow in Figure \ref{annulus_split_fig}, $(\ev_{1} \times \ev_{-1})^{-1}(\Delta_L)$ is a boundary stratum of the Gromov compactification of the moduli space $\mathcal M_{\rm an}^*(c,L)$ of annuli with one boundary puncture asymptotic to $c$.

\begin{figure}%{r}{0.35\textwidth}
	\begin{center}
		\def\svgwidth{.5\textwidth}
		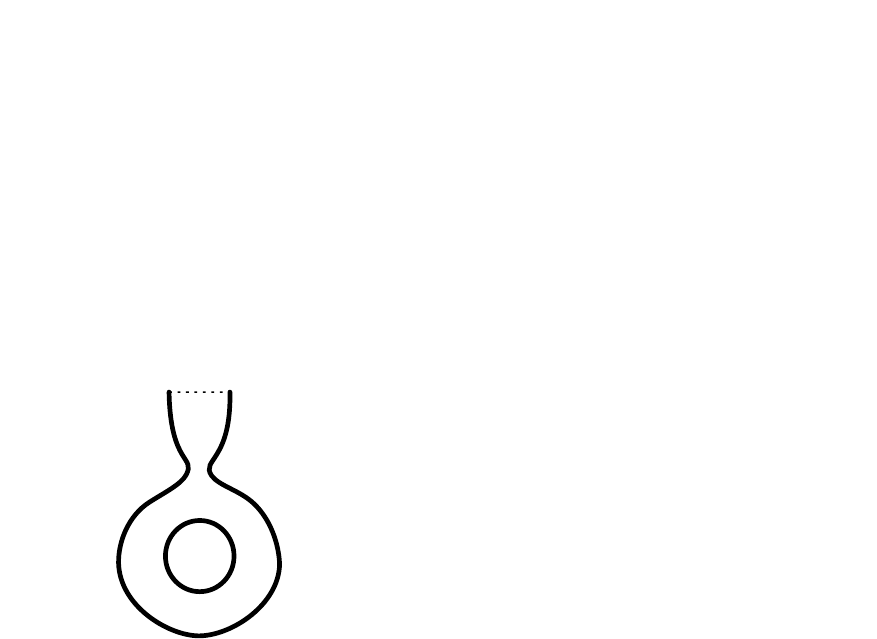
	\end{center}
	\caption{Horizontally, a family of disks crossing a rigid annulus. Diagonally, hyperbolic boundary splitting of a family of annuli with a boundary puncture}
	\label{annulus_split_fig}
\end{figure}

\begin{lemma}\label{l:gluingmultipleannuli}
Let $v_{t}\colon (D^{\circ},\partial D^{\circ})\to (T^{\ast}\R^{3},L)$ be a transversely cut out 1-parameter family of disks with one boundary puncture at $c$ and let $u\colon (A_{r},\partial A_r)\to (T^{\ast}\R^{3},\R^{3}\cup L)$ be an embedded rigid annulus, such that $v_{t}$ intersects the boundary of $u(A_{r})$ transversely at $t=0$. Then, for each such intersection point and each $d>0$, there is a 1-parameter family of annuli with one positive puncture at $c$ and whose Gromov-limit consists of $v_0$ and the $d$-fold cover of $u$.

In other words, there is a neighborhood of $(\ev_{1} \times \ev_{-1})^{-1}(\Delta_L)$ in the Gromov compactification of $\mathcal M_{\rm an}^*(c,L)$ that is diffeomorphic to the product of $(\ev_{1} \times \ev_{-1})^{-1}(\Delta_L)$ with the interval $[0,1)$.
\end{lemma}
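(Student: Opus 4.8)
The plan is to perform a Floer-type gluing at the boundary node, following closely the analysis of hyperbolic boundary splitting in \cite[Section~4.4.1]{EkholmShende} and explaining the modifications forced by the multiple cover. Fix an intersection point $q\in v_{0}(\partial D^{\circ})\cap u(\partial A_{r})$ and an integer $d>0$, let $u^{(d)}$ be the $d$-fold cover of $u$ equipped with a boundary marked point at one of the $d$ preimages of $q$, and equip $v_{0}$ with a boundary marked point at $q$; then the pair $(u^{(d)},v_{0})$ represents a point $\mathfrak{p}$ of the fibre product $(\ev_{1}\times\ev_{-1})^{-1}(\Delta_{L})\subset\mathcal M_{\rm an}^{*}(L)\times\mathcal M^{*}(c)$. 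The key observation is that the $\Z/d\Z$ isotropy of the unmarked cover acts freely on the $d$ lifts of $q$, so $u^{(d)}$ together with its marked point is an honest point of the \emph{manifold} $\mathcal M_{\rm an}^{*}(L)$ and its domain is a stable annulus-with-boundary-marked-point carrying no automorphisms. Thus the gluing problem is formally identical to that for a simply covered annulus, no orbifold subtleties enter, and the only residual effect of the multiplicity is that there are $d$ choices of lift of $q$, hence $d$ glued families per intersection point.

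First I would carry out the pre-gluing. The positive puncture at $c$ sits on $v_{0}$, far from $q$, and is left untouched; near the marked points one forms the standard boundary connected sum, excising small half-disk neighbourhoods of $q$ in $v_{0}$ and of its chosen lift in $u^{(d)}$ and inserting a neck $[-\rho,\rho]\times[0,1]$. Since a disk boundary-connect-summed to an annulus along boundary points is again an annulus, the resulting domain $\Sigma_{\rho}$ is a punctured annulus $A^{\circ}$; splicing the two maps with cut-off functions supported in the neck produces an approximately $J$-holomorphic $w_{\rho}\colon(\Sigma_{\rho},\partial\Sigma_{\rho})\to(T^{\ast}\R^{3},\R^{3}\cup L)$ with positive puncture at $c$ and with $\|\bar\partial_{J}w_{\rho}\|\to 0$ exponentially as $\rho\to\infty$.

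The heart of the argument is the quantitative implicit function theorem on suitable weighted Sobolev completions, which as in \cite{EkholmShende} is an infinite-dimensional Newton iteration. Uniform surjectivity of the linearised operator $D_{w_{\rho}}$, with a right inverse bounded independently of $\rho$, is assembled from three inputs: surjectivity of the linearised operator of the $1$-parameter disk family $v_{t}$, which holds because $v_{t}$ is transversely cut out; regularity of the rigid annulus $u$ and of its multiple covers, supplied by Lemma~\ref{annuli L} and Lemma~\ref{annuli loops}; and transversality of $\ev_{1}\times\ev_{-1}$ to $\Delta_{L}$, which encodes the node-matching condition and is available by the arguments recorded before the statement of the lemma. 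Newton iteration then produces, for each $\rho\geq\rho_{0}$, a genuine $J$-holomorphic annulus $\tilde w_{\rho}$ with positive puncture at $c$, depending smoothly on $\rho$, whose Gromov limit as $\rho\to\infty$ is the broken configuration $(u^{(d)},v_{0})$; reparametrising $\rho\mapsto(1+\rho)^{-1}$ exhibits a half-open arc $[0,1)$ in the Gromov compactification of $\mathcal M_{\rm an}^{*}(c,L)$ attached at $\mathfrak{p}$. Surjectivity of the gluing map near the broken locus follows from the usual soft argument: a sequence in $\mathcal M_{\rm an}^{*}(c,L)$ Gromov-converging to a point of $(\ev_{1}\times\ev_{-1})^{-1}(\Delta_{L})$ eventually has a long thin neck confined near $q$ and so lies in the image of the pre-gluing map. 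Since $(\ev_{1}\times\ev_{-1})^{-1}(\Delta_{L})$ is a finite set of points, this identifies a neighbourhood of it in the compactification with $(\ev_{1}\times\ev_{-1})^{-1}(\Delta_{L})\times[0,1)$, as claimed.

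I expect the main obstacle to be the uniform invertibility estimate in the presence of the multiple cover: one must check that the regularity of $u^{(d)}$ furnished by Lemma~\ref{annuli loops} (equivalently, by the explicit computation underlying Lemma~\ref{annuli L}) really yields a right inverse whose norm does not blow up after the connected sum, and that the dimension of the kernel does not jump across the gluing. This is precisely where the freeness of the $\Z/d\Z$-action on the lifts of $q$ enters: it guarantees that imposing the marked-point constraint cuts the cokernel of the annulus operator down to the size one has in the simply covered case, so that the combined operator is surjective with a one-dimensional kernel spanned by the gluing/neck-length direction.
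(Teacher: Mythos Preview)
Your approach is exactly the paper's: the boundary marker kills the $\Z/d\Z$ automorphisms of the $d$-fold cover, after which the problem reduces to standard nodal boundary gluing with transverse evaluation maps (the paper simply cites \cite{FOOO,BiranCorneaQuantum} for this). One small slip: you write that the $d$ lifts of $q$ yield ``$d$ glued families per intersection point,'' but since the $\Z/d\Z$ isotropy you just invoked acts transitively on those lifts, all $d$ marked covers represent the \emph{same} point of $\mathcal M_{\rm an}^{*}(L)$, and there is a single glued family per intersection point and per $d$, in agreement with the statement.
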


\begin{proof}
	Since the markers kill the automorphisms of the annuli, the proof reduces to the well-known case of gluing of holomorphic curves with two boundary components meeting with transverse evaluation maps at a nodal point, see for example \cite{FOOO} or \cite[Theorem 4.1.2]{BiranCorneaQuantum}.
\end{proof}

\begin{remark}
Lemma \ref{l:gluingmultipleannuli} is a simple example of a gluing theorem where multiple covers are left unperturbed. In the present situation, only one of the components is multiply covered and since it is an annulus only regular covers contribute. In general, when two simple curves that both admit formally rigid multiple covers intersect on the boundary, one must take into account contributions from all of the branched covers with constant curves attached when gluing. For a further discussion of this point, see \cite{EkholmKucharskiLonghi2}.
\end{remark}

%
%For simple disks, such formulas were worked out from a physics perspective in \cite{EkholmKucharskiLonghi2}, where it is also shown that the case of simple curves of higher genus \cite[Appendix A]{EkholmKucharskiLonghi2} is more involved, with corrections to the `two-to-three-formula', \cite[Equation (6.24)]{EkholmKucharskiLonghi2}. It would be valuable to have a more mathematical perspective on these gluing formulas, e.g.~from the perspective of obstruction bundles. 	
%
%{\color{blue}My suggestion for the second half of the remark above, in response to a comment from a referee:
%
%"In \cite{EkholmKucharskiLonghi2}, such formulas were worked out for simple disks, and were shown to be compatible with insights coming from physics. It is also shown that the case of simple curves of higher genus \cite[Appendix A]{EkholmKucharskiLonghi2} is more involved, with corrections to the `two-to-three-formula', \cite[Equation (6.24)]{EkholmKucharskiLonghi2}. It would be valuable to have a more mathematical perspective on these gluing formulas, e.g.~using obstruction bundles. 	
%"
%}

\subsection{The boundary of the space of once-punctured annuli}
Let $K\subset\R^{3}$ be a knot and let $\Lambda_{K}$ be its Legendrian knot conormal. We write $L$ for a Lagrangian filling of $\Lambda_K$, where either $L=L_{K}^{\delta}$ or $L=M_{K}^{\delta}$. We assume that $L\cap \R^{3}$ is transverse and consists of a collection of points $\xi_{j}$ of index $1$ and $\eta_{j}$ of index $2$. (For $L_{K}^{\delta}$, there are no points in the intersection and for $M_{K}^{\delta}$, intersection points correspond to critical points of a circle valued Morse function.) Fix a Reeb chord $c$ of degree 1.

Denote by $\mathcal{M}_{\mathrm{an}}(c)$ the moduli space of holomorphic annuli with one positive boundary puncture at $c$, one boundary component on $\R^{3}$ and the other on $L$. Its boundary components are the following, see Figure \ref{bdryL1_fig}. 
%for a depiction of this moduli space and its boundary components, which we now discuss:
%
%Define the moduli space $\mathcal{M}_{\mathrm{an}}(y)$ to be the following weighted 1-manifold. Write $y=\sum m_{j}b_{j}$, where the $m_{j}$ are analytic functions of $\lambda$ or of $(\lambda,\mu)$ (depending on which case we are in under the Assumptions above). 
%%when $L=L_{K}^{\delta}$ and $L=M_{K}^{\delta}$, respectively. 
%Let
%\[ 
%\mathcal{M}_{\mathrm{an}}(y)=\sum_{j} m_{j}\mathcal{M}_{\mathrm{an}}(b_{j}),
%\]
%where $\mathcal{M}_{\mathrm{an}}(b_{j})$ is the moduli space of holomorphic annuli with one positive boundary puncture at $b_{j}$, one boundary component on $\R^{3}$ and the other on $L$. See Figure \ref{bdryL1_fig} for a depiction of this moduli space and its boundary components, which we now discuss. 
%
%We use notation as above: when we say that curves in a moduli space have positive puncture at $y$ this means that we take the $m_{j}$-weighted sum of the corresponding moduli spaces with positive puncture at $c_{j}$. The moduli spaces are as follows:
%
\begin{itemize}
\item $\mathcal{M}(c)\times_{\ev_{\partial}}\mathcal{M}_{\mathrm{an}}(\R^{3},L)$ is a fibered product of moduli spaces. $\mathcal{M}(c)$ is the moduli space of holomorphic disks with boundary on $L$ and one positive puncture asymptotic to $c$, and $\mathcal{M}_{\mathrm{an}}(\R^{3},L)$ is the moduli space of annuli with one boundary component on $\R^{3}$ and the other on $L$. The fibered product is over boundary evaluation maps. Note that the orientation of the moduli spaces, of the boundary of the disk, and of the Lagrangians induces an orientation of $\mathcal{M}(c)\times_{\ev_{\partial}}\mathcal{M}_{\mathrm{an}}(\R^{3},L)$.  
\item $\mathcal{M}(c;L)$ is the moduli space of holomorphic disks in $T^{\ast}\R^{3}$, with boundary on $L$ and an interior marked point mapping to $\R^{3}$. Note that the orientation of the moduli space, of the domain disk, and of the Lagrangian induces an orientation of $\mathcal{M}(c;L)$.
\item $\mathcal{M}_{\mathrm{an}}(c,\boldsymbol{a}):=\bigcup_{j}\mathcal{M}(c,a_{j})\times \mathcal{M}_{\mathrm{an}}(a_{j})$ is a union of products of moduli spaces. $\mathcal{M}(c,a_{j})$ is the moduli space of strips that give the contribution of $a_{j}$ to the $\epsilon_L$-linearized differential of $c$. These strips in $\R\times ST^*\R^3$ have a positive puncture asymptotic to $c$ and a negative puncture asymptotic to $a_{j}$. They may have additional negative punctures asymptotic to Reeb chords of degree 0, in which case each such chord comes with a punctured disk in $T^*\R^3$ with boundary on $L$ and boundary puncture asymptotic to the Reeb chord. The space $\mathcal{M}_{\mathrm{an}}(a_{j})$ is the moduli space of annuli with a positive boundary puncture asymptotic to $a_{j}$, one boundary component on $L$ and the other on $\R^{3}$.
\item $\mathcal{M}_{\mathrm{an}}(c)\times_{\ev_{\partial}}\mathcal{M}(L)$ is a fibered product over boundary evaluation maps, where $\mathcal{M}(L)$ is the moduli space of holomorphic disks with boundary on $L$. As above, there is a natural orientation induced on $\mathcal{M}_{\mathrm{an}}(c)\times_{\ev_{\partial}}\mathcal{M}(L)$.
\item $\mathcal{M}(c,\xi)=\bigcup_{j}\mathcal{M}(c,\xi_{j},\xi_{j})$ is a union of moduli spaces. $\mathcal{M}(c,\xi_{j},\xi_{j})$ is the space of holomorphic disks with boundary on $L\cup\R^{3}$ with three boundary punctures: one positive puncture asymptotic to $c$ and two punctures asymptotic to the intersection point $\xi_{j}$.
\item $\mathcal{M}(c,\eta)=\bigcup_{j}\mathcal{M}(c,\eta_{j},\eta_{j})$ is a similiar union of moduli spaces. $\mathcal{M}(c,\eta_{j},\eta_{j})$ is the space of holomorphic disks with boundary on $L\cup\R^{3}$ with three boundary punctures: one positive puncture asymptotic to $c$ and two punctures asymptotic to the intersection point $\eta_{j}$.
\end{itemize}

\begin{figure}%{r}{0.35\textwidth}
	\begin{center}
		\def\svgwidth{.9\textwidth}
		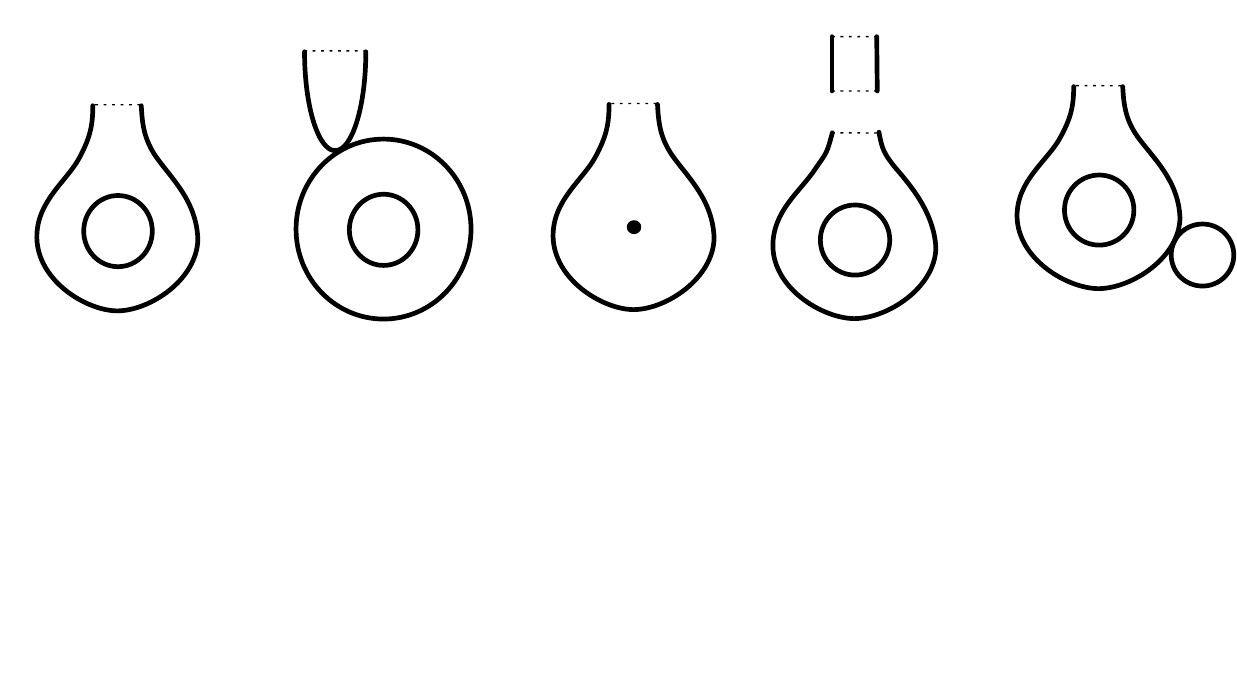
	\end{center}
	\caption{The boundary of $ \mathcal{M}_{\mathrm{an}}(y)$}
	\label{bdryL1_fig}
\end{figure}

\begin{lemma}\label{l:boundaryannuliy} 
For a generic almost complex structure, the moduli space $\mathcal{M}_{\mathrm{an}}(c)$ is transversely cut out and forms a 1-manifold with a natural compactification. Its (oriented) boundary is 
\begin{align}\notag
\partial \mathcal{M}_{\mathrm{an}}(c) \ &= \ 
\mathcal{M}(c)\times_{\ev_{\partial}}\mathcal{M}_{\mathrm{an}}(\R^3,L) \ \cup \ 
\mathcal{M}(c;L) \ \cup \ 
\mathcal{M}_{\mathrm{an}}(c,\boldsymbol{a}) \ \cup  \\ \label{eq:intpointsy}
  \ &\qquad \cup \
\mathcal{M}_{\mathrm{an}}(c)\times_{\ev_{\partial}}\mathcal{M}(L) \cup \ \mathcal{M}(c,\xi) \ \cup \
\mathcal{M}(c,\eta).
\end{align}
See Figure \ref{bdryL1_fig}. For almost complex structures as in Lemma \ref{LK MK augment}, the set $\mathcal{M}(L)$ is empty and hence $\mathcal{M}_{\mathrm{an}}(c)\times_{\ev_{\partial}}\mathcal{M}(L)$ is also empty.
%Here, $\mathcal{M}_{\mathrm{an}}(c,\boldsymbol{a})$ has algebraically zero points since $y$ is a cycle and $\mathcal{M}_{\mathrm{an}}(c)\times_{\ev_{\partial}}\mathcal{M}(L)$ has algebraically zero points since $\mathcal{M}(L)$ is empty by Lemma \ref{LK MK augment}.
\end{lemma}

\begin{proof}
Note that every curve in $\mathcal{M}(c)$ has injective points near the positive puncture, and hence standard arguments show that perturbing the almost complex structure suffices to achieve transversality. To understand the boundary, we consider degenerations of the domain where the limit is either a nodal curve with one disk and one annulus component or the conformal ratio of the annulus goes to infinity. The nodal limits are curves in 
\[ 
\mathcal{M}(c)\times_{\ev_{\partial}}\mathcal{M}_{\mathrm{an}}(\R^3,L) \ \cup \
\mathcal{M}_{\mathrm{an}}(c,\boldsymbol{a}) \ \cup \ 
\mathcal{M}_{\mathrm{an}}(c)\times_{\ev_{\partial}}\mathcal{M}(L) .
\]   
To parameterize a neighborhood of the boundary of $\mathcal{M}_{\mathrm{an}}(c)$, we use Lemma \ref{l:gluingmultipleannuli} for $\mathcal{M}(c)\times_{\ev_{\partial}}\mathcal{M}_{\mathrm{an}}(\R^3,L)$, for $\mathcal{M}_{\mathrm{an}}(c,\boldsymbol{a})$ we use SFT-gluing, see e.g. \cite[Section 10]{CELN} 
and for $\mathcal{M}_{\mathrm{an}}(c)\times_{\ev_{\partial}}\mathcal{M}(L)$ we use standard boundary gluing, see \cite[Section 6]{EkholmSmithjams}.

Degeneration of the conformal ratio of the annulus corresponds to
\[ 
\mathcal{M}(c;L) \ \cup \ 
\mathcal{M}(c,\xi) \ \cup \
\mathcal{M}(c,\eta),
\]
where a neighborhood of the boundary is parameterized via  Lemma \ref{l:standardnode} for $\mathcal{M}(c,L)$ (which corresponds to when the modulus of the annulus goes to infinity), 
and gluing at a Lagrangian intersection point for $\mathcal{M}(c,\xi)\cup \mathcal{M}(c,\eta)$ (corresponding to when the modulus goes to zero), see \cite[Appendix A.4--6]{EkholmSmithmathann}. Note here that the orientation of the moduli space of disks with corners is induced by the oriented capping disks that are required to glue to the a closed disk with the Fukaya orientation, see Section \ref{ssec:orientdisks}. Then, gluing the orientation of the disks and those of the caps gives the Fukaya orientation of the annulus together with the orientation of the gluing of the capping disks. It follows that the disk has the boundnary orientation of the space of annuli.
\end{proof}

\subsection{Curve counts at infinity}
In this section, we construct 1-dimensional moduli spaces such that some of their boundary components coincide with certain boundary components of $\mathcal{M}_{\mathrm{an}}(c)$ in \eqref{eq:intpointsy}, namely $\mathcal{M}(c)\times_{\ev_{\partial}}\mathcal{M}_{\mathrm{an}}(\R^3,L)$ and $\mathcal{M}(c;L)$. In order to treat $L=L_{K}^{\delta}$ and $L=M_{K}^{\delta}$ simultaneously, we consider curves $\alpha$ and $\beta$ in $\Lambda_{K}$ whose homology classes generate $H_{1}(\Lambda_K,\Z)$, so that $[\alpha]$ generates $H_{1}(L,\Z)$ and $\beta$ is null-homologous in $L$ (so, $\alpha=x$ and $\beta=p$ if $L=L_{K}^{\delta}$, and vice versa if $L=M_{K}^{\delta}$.)   
%
%In this section, we find 1-dimensional moduli spaces which serve as cobordisms between those boundary components of $\mathcal{M}_{\mathrm{an}}(c)$ in \eqref{eq:intpointsy} that do not involve Lagrangian double points or holomorphic curves at infinity, namely $\mathcal{M}(c)\times_{\ev_{\partial}}\mathcal{M}_{\mathrm{an}}(\R^3,L)$, $\mathcal{M}(c;L)$ and $\mathcal{M}_{\mathrm{an}}(c)\times_{\ev_{\partial}}\mathcal{M}(L)$. In order to treat $L=L_{K}^{\delta}$ and $L=M_{K}^{\delta}$ simultaneously, we consider curves $\alpha$ and $\beta$ in $\Lambda_{K}$ that generate $H_{1}(\Lambda_K,\Z)$, so that $\alpha$ generates $H_{1}(L,\Z)$ and $\beta$ is null-homologous in $L$ (so, $\alpha=x$ and $\beta=p$ if $L=L_{K}^{\delta}$, and vice versa if $L=M_{K}^{\delta}$.)   

We start with $\mathcal{M}(c)\times_{\ev_{\partial}}\mathcal{M}_{\mathrm{an}}(\R^3,L)$, and use a construction similar to one used in the definition of the disk potential in \cite{AENV}. Each holomorphic annulus $u\colon A_R \to T^*\R^3$ in $\M_{\mathrm an}(\R^3,L)$ specifies, by restricting to the boundary component $\partial_{0} A_{R}$ that maps to $L$, an element in $H_1(L;\Z)$. An area argument shows that this element is of the form $k_u[\alpha]$, where $k_u\in \Z_+$ and $\alpha$ is the curve specified in the previous paragraph.
Let $\sigma_u$ be a smooth Borel--Moore 2-chain with integer coefficients in $L$, such that $\sigma_u$ gives a homology 
between the loop $u(\partial_{0} A_{R})$ and $k_u \alpha$ in $\Lambda_K$ at infinity. We call $\sigma_u$ a \emph{bounding chain} for $u$, as in \cite[Section 6.3]{AENV}. 

\begin{figure}%{r}{0.35\textwidth}
	\begin{center}
		\def\svgwidth{.9\textwidth}
		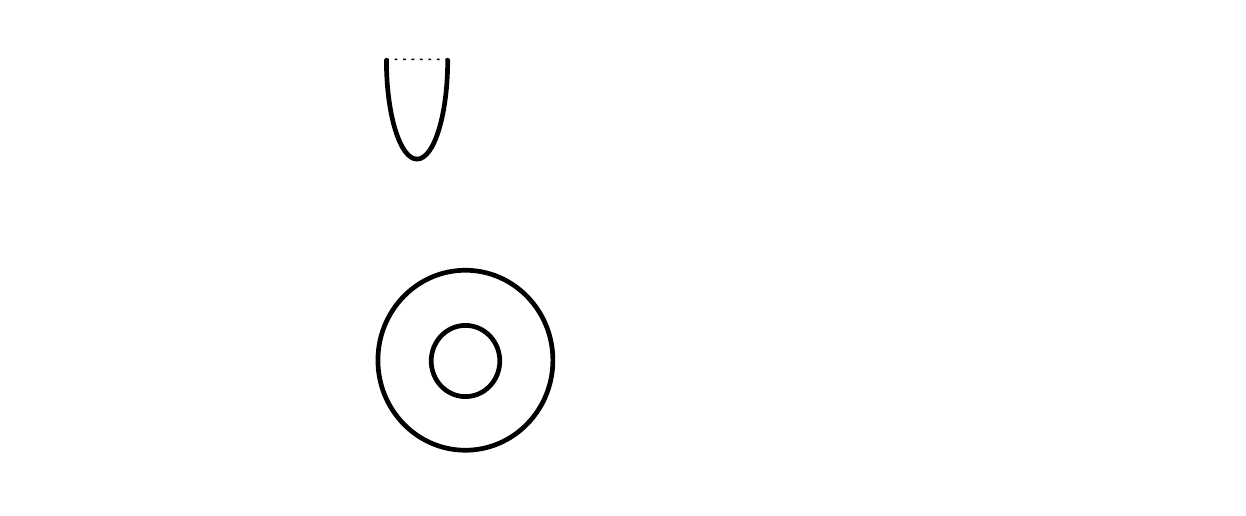
	\end{center}
	\caption{The boundary of $\MMh$}
	\label{bdryL2_fig}
\end{figure}

%{\color{blue} below had union, but I changed to set of bounding chains to be able to keep track of the annuli. we should double check that this makes sense}

Let
\[ 
\sigma=\left\{(u,\sigma_{u}) | u\in\mathcal{M}_{\mathrm{an}}(L) \right\}
\] 
%\[ 
%\sigma=\bigcup_{u\in\mathcal{M}_{\mathrm{an}}(L)}\sigma_{u}
%\] 
denote the set of all the pairs consisting of an annulus in $\mathcal{M}_{\mathrm{an}}(L)$ with its bounding chain. Recall that for every fixed $E>0$, there are finitely many annuli of energy at most $E$. 

Define $\mathcal{M}(c,\sigma)$ as the 1-dimensional moduli space of pairs $(u_1,u_2)$, where $u_1\in \mathcal{M}_{\mathrm{an}}(L)$ is a holomorphic annulus and $u_2$ is a holomorphic disk in $T^*\R^3$ with one positive boundary puncture asymptotic to $c$ and a boundary point intersecting $\sigma_{u_1}$ transversely. See Figure \ref{bdryL2_fig}. %Since the point in $\sigma$ is in the bounding chain of some annulus $u_1$, we also represent the annulus in the Figure and denote the punctured holomorphic disk by $u_2$.

We next describe the boundary of $\mathcal{M}(c,\sigma)$ in terms of other moduli spaces as follows.
\begin{itemize}
\item Let $\mathcal{M}^{\infty}(c,\sigma)$ denote the moduli space of pairs $(u_1,u_2)$ as follows. Here $u_1\in \mathcal{M}_{\mathrm{an}}(L)$ is a holomorphic annulus and $u_2$ is a holomorphic disk in $\R\times ST^*\R^3$ with one positive boundary puncture asymptotic to $c$ and a boundary point intersecting $k_{u_2}\alpha$ transversely in $\Lambda_K$. Note that the orientation of the boundary of the annulus induces an orientation of the bounding chain, which together with the orientations of the moduli spaces and of the boundary of the disk gives an orientation of $\mathcal{M}^{\infty}(c,\sigma)$.

The schematic representation of $\MMj$ in Figure \ref{bdryL2_fig} should be thought of as a simplification, since $u_2$ is allowed to have negative punctures asymptotic to Reeb chords, and each of these Reeb chords comes with a punctured holomorphic disk with boundary in $L$ and a puncture asymptotic to the Reeb chord. A more accurate illustration of the terms at infinity is given in Figure \ref{bdryL2infty_fig}. By Lemma 
\ref{LK MK augment} $(ii)$, we can use $\Ld$ to compute $\epsilon_{L_K}$. Similar observations should be made about curves at infinity in the various moduli spaces considered in this section.

%{\color{blue} say smth about $L$ not being cylindrical at infinity, in previous point. Or redefine $\Ld$ and $\Md$ to make them cylindrical at infinity (the former option is probably better)}

\item Let $\mathcal{M}(c,\boldsymbol{a},\sigma)$ denote the union of products
\[ 
\mathcal{M}(c,\boldsymbol{a},\sigma)=\bigcup_{j}\mathcal{M}(c,a_{j})\times\mathcal{M}(a_{j},\sigma).
\]
\end{itemize}

\begin{figure}%{r}{0.35\textwidth}
	\begin{center}
		\def\svgwidth{.7\textwidth}
		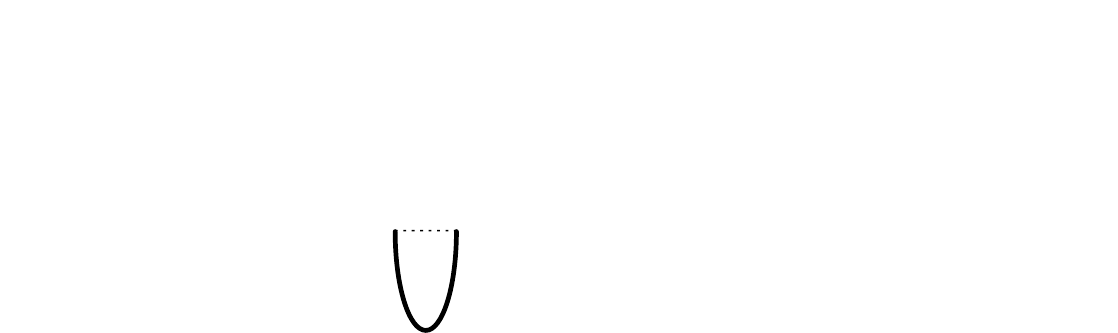
	\end{center}
	\caption{The components $u_2$ in $\MMj$}
	\label{bdryL2infty_fig}
\end{figure}

\begin{lemma}\label{l:boundingchainann}
The (oriented) boundary of the moduli space $\mathcal{M}(c,\sigma)$ is
\[ 
\mathcal{M}^{\infty}(c,\sigma) \ \cup \
\mathcal{M}(c)\times_{\ev_{\partial}}\mathcal{M}_{\mathrm{an}}(\R^3,L) \ \cup \
\mathcal{M}(c,\boldsymbol{a},\sigma).
\]
See Figure \ref{bdryL2_fig}. 
%where $\mathcal{M}(c,\boldsymbol{a},\sigma)$ contains algebraically zero points since $y$ is a cycle.
\end{lemma}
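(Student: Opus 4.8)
The plan is to carry out the standard boundary analysis of the compactified one-dimensional weighted moduli space $\overline{\MMh}$, in the same spirit as Lemma \ref{l:boundaryannuliy} and as the bounding-chain discussion of \cite[Section~6.3]{AENV}. First I would address transversality: every curve in $\MMh$ is a disk with boundary on $L$ carrying a positive puncture at a chord appearing in $y$, so it has injective points near that puncture, and standard arguments then give transversality of the underlying disk moduli for a generic $J$ in the class of Section \ref{models}. The finitely many annuli below the relevant action level have bounding chains $\sigma_u\subset L$, which I would take in general position so that $\sigma=\bigcup_u\sigma_u$ is an honest Borel--Moore $2$-chain whose boundary in the interior of $T^*\R^3$ is exactly $\bigcup_u u(\partial_0 A_R)$, the $L$-boundary loops of the annuli in $\mathcal{M}_{\mathrm{an}}(\R^3,L)$, the remaining boundary $k\alpha$ of each $\sigma_u$ lying at infinity in $\Lambda_K$. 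Making the boundary evaluation map of $\MMh$ transverse to $\sigma$ and to this finite part of $\partial\sigma$ then renders $\MMh$ a transversely cut out weighted $1$-manifold.

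Next I would identify the boundary strata via SFT compactness. Any sequence in $\MMh$ subconverges to a holomorphic building, and there are three codimension-one degenerations. (i) Part of the curve escapes to infinity along the asymptotically cylindrical end---either a level splits off at the positive puncture, making the top level $\R$-invariant in $\RL$, or the boundary marked point runs to infinity along $\sigma$ toward $k\alpha$; all such limits are by definition the curves at infinity $\MMj$, whose multi-level shape is indicated in Figure \ref{bdryL2infty_fig}, and---just as for genuinely cylindrical fillings, cf.\ the remark after the definition of $\Md$ in Section \ref{models}---they form a boundary component. (ii) A negative puncture at a degree-zero chord $a_j$ forms, a strip in $\M(y,a_j)$ breaking off and a disk with positive puncture at $a_j$ and a boundary point on $\sigma$ remaining, giving $\MMk=\bigcup_j\M(y,a_j)\times\M(a_j,\sigma)$. (iii) The boundary marked point reaches the finite part of $\partial\sigma$, so the limit is a disk in $\M(y)$ whose $L$-boundary loop meets the $L$-boundary of an annulus, i.e.\ a point of $\MMi$. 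There is no disk or sphere bubbling, by Lemma \ref{LK MK augment}$(i)$ and exactness of $T^*\R^3$, and collisions of the marked point with punctures or with constant components contribute no new strata. Gluing in the reverse direction---SFT gluing at the Reeb chord with an $\R$-invariant top level for $\MMj$ (cf.\ \cite[Section~10]{CELN}), SFT gluing at $a_j$ for $\MMk$, and for $\MMi$ the gluing of a disk boundary to an annulus boundary at a transverse intersection point, exactly as in \cite[Section~6.3]{AENV}---shows each stratum is genuinely a boundary, so $\partial\MMh=\MMj\cup\MMi\cup\MMk$.

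For the last assertion, the signed count of $\MMk$ equals $\sum_j\langle\partial_1^{\epsilon}y,a_j\rangle\cdot\#\M(a_j,\sigma)$, where $\partial_1^{\epsilon}$ is the linearized differential at the relevant augmentation; since $y$ is a cycle, $\partial_1^{\epsilon}y=0$, so every coefficient $\langle\partial_1^{\epsilon}y,a_j\rangle$ vanishes and $\MMk$ has algebraically zero points.

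The main obstacle is step (iii) together with the bounding-chain bookkeeping around it: one must choose the $\sigma_u$ coherently so that $\sigma$ is a genuine $2$-chain with the stated boundary, make sure the ``at infinity'' piece $k\alpha$ of $\partial\sigma$ enters only through the $\MMj$ stratum and not as a spurious extra boundary, and verify that the marked-point-hits-$u(\partial_0 A_R)$ stratum is identified with $\MMi$ with the correct signs and multiplicities. This is parallel to \cite[Section~6.3]{AENV} and to Lemmas \ref{l:standardnode} and \ref{l:gluingmultipleannuli}, but it is where the real analytic work sits.
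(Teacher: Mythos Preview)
Your proposal is correct and follows essentially the same approach as the paper: the paper's proof simply lists the same three sources of noncompactness---SFT breaking at a Reeb chord giving $\mathcal{M}(y,\boldsymbol{a},\sigma)$, the curve moving to infinity giving $\mathcal{M}^{\infty}(y,\sigma)$, and the marked point reaching $\partial\sigma$ giving $\mathcal{M}(y)\times_{\ev_{\partial}}\mathcal{M}_{\mathrm{an}}(\R^3,L)$---and leaves it at that. Your version is considerably more fleshed out (transversality, gluing references, the explicit cycle computation for the last clause), but the underlying decomposition and reasoning are the same.
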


\begin{proof}
There are three sources of non-compactness: either $u_2$ breaks which corresponds to $\mathcal{M}(c,\boldsymbol{a},\sigma)$, or it moves to infinity which corresponds to $\mathcal{M}^{\infty}(c,\sigma)$, or the intersection of $u_2$ with $\sigma_{u_1}$ moves to the boundary of $u_1$ which corresponds to $\mathcal{M}(c)\times_{\ev_{\partial}}\mathcal{M}_{\mathrm{an}}(\R^3,L)$.  	
\end{proof}

We now consider the moduli space $\mathcal{M}(c,L)$ in \eqref{eq:intpointsy} from a similar point of view. Fix a smooth Borel--Moore 4-chain with integer coefficients in $V$ in $T^*\R^3$ whose boundary is $\R^3$. Explicitly, we can pick a section $s$ of the trivial bundle $ST^*\R^3\to \R^3$ (as in Section \ref{choices and coeffs}) and define 
\begin{equation} \label{4 chain}
V:= \{t s(x) \,|\, t\geq 0, x\in \R^3\} \subset T^*\R^3.
\end{equation}
We then identify the boundary at infinity of $V$, which we denote by $\partial^\infty V$, with the section $s$. 

We also need to take into account the fact that $V\cap L\ne \varnothing$. For generic $s$, $V\cap L$ is an oriented 1-manifold $\gamma$ with boundary at infinity $\partial^{\infty}\gamma=s \cap \Lambda_{K}$, which is an oriented 0-manifold. We pick a $1$-manifold $\tau\subset \Lambda_{K}$ such that $\partial\tau=\partial^{\infty}\gamma$. Such $\tau$ was introduced in the discussion that precedes Lemma \ref{compute Q}. That discussion addresses the homological ambiguity in the choice of $\tau$, and implies that we can choose $\tau$ so that the 1-cycle $\gamma + \tau$ is null-homologous in $L$. We make this assumption on $\tau$. Finally, pick a bounding chain $\sigma_\tau$ for $\gamma + \tau$, in the same manner as was done above for the curves in $L$ that are boundaries of holomorphic annuli $u\in \M_{\mathrm an}(\R^3,L)$. See Figure \ref{chainQL_fig}. 
%By definition of bounding chain, the boundary at infinity of $\sigma_\tau$ is then the curve $0\cdot\alpha$.

\begin{figure}%{r}{0.35\textwidth}
	\begin{center}
		\def\svgwidth{.35\textwidth}
		\hspace{2cm}%% Creator: Inkscape 1.2.2 (732a01da63, 2022-12-09), www.inkscape.org
%% PDF/EPS/PS + LaTeX output extension by Johan Engelen, 2010
%% Accompanies image file '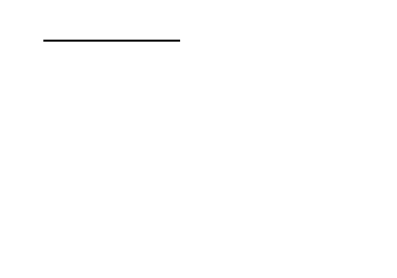' (pdf, eps, ps)
%%
%% To include the image in your LaTeX document, write
%%   \input{<filename>.pdf_tex}
%%  instead of
%%   \includegraphics{<filename>.pdf}
%% To scale the image, write
%%   \def\svgwidth{<desired width>}
%%   \input{<filename>.pdf_tex}
%%  instead of
%%   \includegraphics[width=<desired width>]{<filename>.pdf}
%%
%% Images with a different path to the parent latex file can
%% be accessed with the `import' package (which may need to be
%% installed) using
%%   \usepackage{import}
%% in the preamble, and then including the image with
%%   \import{<path to file>}{<filename>.pdf_tex}
%% Alternatively, one can specify
%%   \graphicspath{{<path to file>/}}
%% 
%% For more information, please see info/svg-inkscape on CTAN:
%%   http://tug.ctan.org/tex-archive/info/svg-inkscape
%%
\begingroup%
  \makeatletter%
  \providecommand\color[2][]{%
    \errmessage{(Inkscape) Color is used for the text in Inkscape, but the package 'color.sty' is not loaded}%
    \renewcommand\color[2][]{}%
  }%
  \providecommand\transparent[1]{%
    \errmessage{(Inkscape) Transparency is used (non-zero) for the text in Inkscape, but the package 'transparent.sty' is not loaded}%
    \renewcommand\transparent[1]{}%
  }%
  \providecommand\rotatebox[2]{#2}%
  \newcommand*\fsize{\dimexpr\f@size pt\relax}%
  \newcommand*\lineheight[1]{\fontsize{\fsize}{#1\fsize}\selectfont}%
  \ifx\svgwidth\undefined%
    \setlength{\unitlength}{190.43943558bp}%
    \ifx\svgscale\undefined%
      \relax%
    \else%
      \setlength{\unitlength}{\unitlength * \real{\svgscale}}%
    \fi%
  \else%
    \setlength{\unitlength}{\svgwidth}%
  \fi%
  \global\let\svgwidth\undefined%
  \global\let\svgscale\undefined%
  \makeatother%
  \begin{picture}(1,0.70055647)%
    \lineheight{1}%
    \setlength\tabcolsep{0pt}%
    \put(0,0){\includegraphics[width=\unitlength,page=1]{chainQL.pdf}}%
    \put(0.188215,0.62843832){\color[rgb]{0,0,0}\makebox(0,0)[lt]{\lineheight{0}\smash{\begin{tabular}[t]{l}$\tau\subset \LK$\end{tabular}}}}%
    \put(0.10039401,0.01810698){\color[rgb]{0,0,0}\makebox(0,0)[lt]{\lineheight{0}\smash{\begin{tabular}[t]{l}$V\cap L=\gamma$\end{tabular}}}}%
    \put(0,0){\includegraphics[width=\unitlength,page=2]{chainQL.pdf}}%
    \put(0.67509633,0.63309405){\color[rgb]{0,0,0}\makebox(0,0)[lt]{\lineheight{0}\smash{\begin{tabular}[t]{l}$\tau$\end{tabular}}}}%
    \put(0,0){\includegraphics[width=\unitlength,page=3]{chainQL.pdf}}%
    \put(0.16768724,0.46978838){\color[rgb]{0,0,0}\makebox(0,0)[lt]{\lineheight{0}\smash{\begin{tabular}[t]{l}$\sigma_\tau\subset L$\end{tabular}}}}%
  \end{picture}%
\endgroup%

	\end{center}
	\caption{A component of $V\cap L = \gamma$, with $\tau\subset \Lambda_K$ such that $\partial \sigma_\tau = \gamma + \tau$}
	\label{chainQL_fig}
\end{figure}

\begin{remark}\label{r:nonuniquenessoftau}
Even under the assumption in the previous paragraph, the choice of $\tau$ is not homologically unique. More precisely, we can modify $\tau$ by adding a multiple $k\cdot\beta$ of the curve $\beta$ in $\Lambda_{K}$, which is null-homologous in $L$. 
\end{remark} 

\begin{figure}%{r}{0.35\textwidth}
	\begin{center}
		\def\svgwidth{.9\textwidth}
		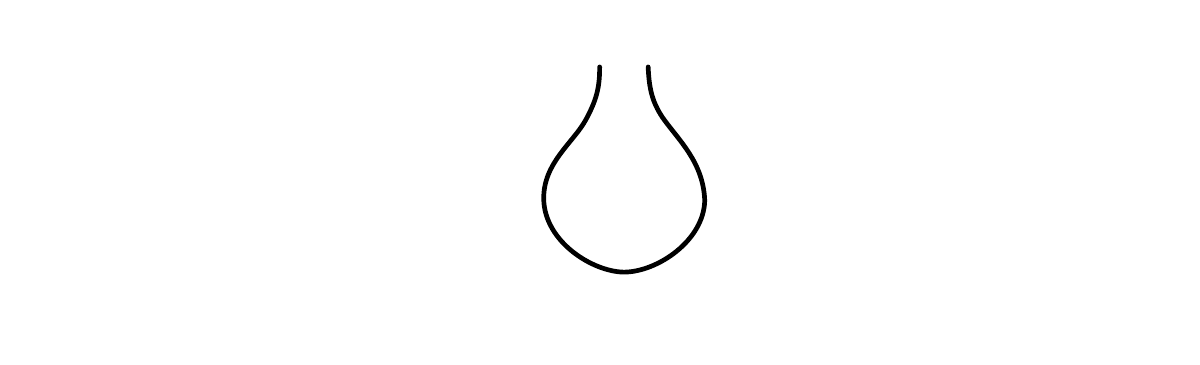
	\end{center}
	\caption{The boundary of $\MMl$}
	\label{bdryL3_fig}
\end{figure}

We now introduce new moduli spaces. The following are 1-dimensional:
\begin{itemize}
\item $\mathcal{M}(c,V)$ is the moduli space of holomorphic disks with boundary on $L$, with one positive boundary puncture asymptotic to $c$ and one interior marked point mapping to $V$, see Figure \ref{bdryL3_fig}. Note that the orientation of $\R^3$ gives an orientation on $V$, which together with orientations of the moduli space and the source disk gives and orientation on $\mathcal{M}(c,V)$. 
\item $\mathcal{M}(c,\sigma_\tau)$ is the moduli space of holomorphic disks with boundary on $L$, with one positive boundary puncture asymptotic to $c$ and one boundary marked point mapping to the bounding chain $\sigma_\tau$, see Figure \ref{bdryL4_fig}. 
\end{itemize}

\begin{figure}%{r}{0.35\textwidth}
	\begin{center}
		\def\svgwidth{.8\textwidth}
		\hspace{2cm}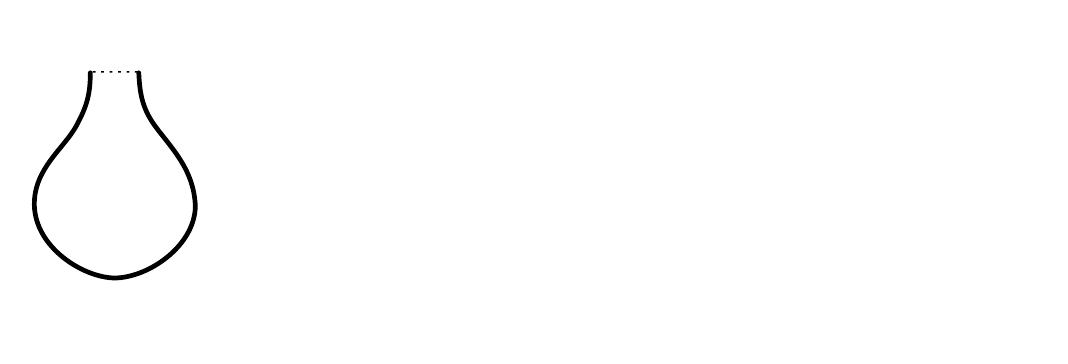
	\end{center}
	\caption{The boundary of $\MMq$}
	\label{bdryL4_fig}
\end{figure}

The boundaries of these 1-dimensional moduli spaces will contain the following 0-dimensional moduli spaces:
\begin{itemize}
\item The moduli space $\mathcal{M}^{\infty}(c,\partial^{\infty} V)$ of holomorphic disks in $\R\times ST^*\R^3$,
% at infinity (divided by the $\R$-translation in the symplectization), 
with a positive boundary puncture at $c$ and an interior marked point mapping to $\partial^{\infty} V$. This space is oriented in the same way as $\mathcal{M}(c,V)$.
\item The moduli space $\mathcal{M}(c,\gamma)$ of holomorphic disks with a positive boundary puncture at $c$ and a boundary marked point mapping to $\gamma$. This space is oriented in the same way as $\mathcal{M}(c,\sigma_\tau)$.
\item The moduli space $\mathcal{M}(c,\boldsymbol{a},V)$: the union of products of moduli spaces $\mathcal{M}(c,a_{j})\times\mathcal{M}(a_{j},V)$, where $\mathcal{M}(a_{j},V)$ is the moduli space of disks with boundary on $L$, one positive boundary puncture asymptotic to $a_{j}$ and one interior marked point mapping to $V$. The space is oriented in the same way as $\mathcal{M}(c,V)$.
\item The moduli space $\mathcal{M}^{\infty}(c,\tau)$ of holomorphic disks in $\R\times ST^*\R^3$, with one positive boundary puncture asymptotic to $c$ and one boundary marked point mapping to $\tau$. This space is oriented in the same way as $\mathcal{M}(c,\sigma_\tau)$.
% (divided by the $\R$-translation in the symplectization).
\item The moduli space $\mathcal{M}(c,\boldsymbol{a},\sigma_\tau)$: the union of products of moduli spaces $\mathcal{M}(c,a_{j})\times\mathcal{M}(a_{j},\sigma_\tau)$, where $\mathcal{M}(a_{j},\sigma_\tau)$ is the moduli space of disks with boundary on $L$, one positive boundary puncture asymptotic to $a_{j}$ and one boundary marked point mapping to $\sigma_\tau$. This space is oriented in the same way as $\mathcal{M}(c,\sigma_\tau)$.
\end{itemize}

\begin{lemma}\label{l:boundary4-chaininsertion}
The 1-dimensional moduli space $\mathcal{M}(c,V)$ has a natural compactification with (oriented) boundary
\[ 
\mathcal{M}(c,L) \ \cup \
\mathcal{M}^{\infty}(c, \partial^{\infty}V) \ \cup \
\mathcal{M}(c,\gamma) \ \cup \
\mathcal{M}(c,\boldsymbol{a},V).
\]
See Figure \ref{bdryL3_fig}. 
%where $\mathcal{M}(y,\boldsymbol{a},V)$ contains algebraically zero points since $y$ is a cycle.
\end{lemma}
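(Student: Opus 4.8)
The plan is to argue as in Lemmas \ref{l:boundaryannuliy} and \ref{l:boundingchainann}: establish transversality, use SFT and Gromov compactness to list the ends of a generic one-parameter family in $\mathcal{M}(y,V)$, and invoke the relevant gluing statements to identify each end with one of the four strata. First I would fix a generic $J$ and, independently, choose the section $s$ of $ST^{\ast}\R^{3}\to\R^{3}$ (hence the 4-chain $V$ of \eqref{4 chain}) generically, so that $s$ is disjoint from all Reeb chords and its graph at infinity $\partial^{\infty}V$ is transverse to $\Lambda_{K}$ (so $\gamma=V\cap L$ is a smooth oriented 1-manifold and $\partial^{\infty}\gamma=\partial^{\infty}V\cap\Lambda_{K}$), and so that the interior evaluation map on the universal disk over $\mathcal{M}(y)$ is transverse to $V$, to $\partial V=\R^{3}$ and to $\partial^{\infty}V$. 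Since every disk with positive puncture at $y$ has injective points near that puncture, a generic perturbation of $J$ makes $\mathcal{M}(y)$ regular; cutting down by the codimension-two incidence condition on $V$ then makes $\mathcal{M}(y,V)$ a transversely cut out 1-manifold. Exactness of $T^{\ast}\R^{3}$ rules out sphere bubbling, and Lemma \ref{LK MK augment}$(i)$ rules out disk bubbling below the relevant energy.

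Next I would enumerate the ends of a one-parameter family $u_{t}\in\mathcal{M}(y,V)$; there are exactly four mechanisms, one for each claimed stratum. (i) A holomorphic strip splits off at the positive puncture in the SFT limit: the limit lies in $\mathcal{M}(y,a_{j})\times\mathcal{M}(a_{j},V)\subset\mathcal{M}(y,\boldsymbol{a},V)$ for some degree-zero chord $a_{j}$; since $\sum_{j}(\#\mathcal{M}(y,a_{j}))\,a_{j}=\partial_{1}^{\epsilon}y=0$ because $y$ is a cycle, this stratum carries algebraic weight zero. (A second breaking, or a breaking together with escape to infinity, is codimension at least two and does not occur.) (ii) The curve escapes into the cylindrical end of $T^{\ast}\R^{3}$ without Reeb-chord breaking: since near infinity $V$ is the cone over $\partial^{\infty}V$, the limit is a disk at infinity, rigid modulo $\R$-translation in the symplectization, with positive puncture at $y$ and interior marked point on $\partial^{\infty}V$, i.e.\ a point of $\mathcal{M}^{\infty}(y,\partial^{\infty}V)$. (iii) The interior marked point converges to the boundary of the domain with image landing on $V\cap L=\gamma$, and thereby becomes a boundary marked point on $\gamma$: the limit lies in $\mathcal{M}(y,\gamma)$. (iv) The interior marked point stays interior but its image converges to $\partial V=\R^{3}$: the limit is a curve in $\mathcal{M}(y,L)$, by definition of $V$. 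Genericity of $(J,s)$ excludes all remaining possibilities, such as the marked point colliding with a node or $V$ passing through a Reeb chord.

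Finally I would carry out the gluing. For (iv) one uses the elementary chain-boundary gluing, exactly as for $\sigma$ in Lemma \ref{l:boundingchainann}: a disk in $\mathcal{M}(y,L)$ is one endpoint of a half-open arc in $\mathcal{M}(y,V)$ obtained by pushing the marked point off $\R^{3}=\partial V$ into the interior of $V$. For (iii) one uses the standard gluing for an interior marked point running into the Lagrangian boundary component (with no bubbling, again by Lemma \ref{LK MK augment}$(i)$), so that $\mathcal{M}(y,\gamma)$ is an endpoint of an arc in $\mathcal{M}(y,V)$. For (i) one uses SFT gluing at a Reeb chord as in \cite[Section 10]{CELN}, and for (ii) SFT gluing in the symplectization end as in the proof of Lemma \ref{l:boundaryannuliy}. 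Putting these together exhibits the natural compactification of $\mathcal{M}(y,V)$ as a compact 1-manifold with boundary $\mathcal{M}(y,L)\cup\mathcal{M}^{\infty}(y,\partial^{\infty}V)\cup\mathcal{M}(y,\gamma)\cup\mathcal{M}(y,\boldsymbol{a},V)$. The analysis itself is routine once the corresponding statements for annuli are in place; I expect the only point requiring genuine care to be the bookkeeping for the three distinct ``boundaries'' of the 4-chain $V$---its finite boundary $\R^{3}$, its boundary at infinity $s=\partial^{\infty}V$, and its trace $\gamma$ on $L$---namely showing that each one produces exactly one of the three nonzero strata, and that the list of codimension-one degenerations is complete.
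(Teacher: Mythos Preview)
Your proposal is correct and follows essentially the same approach as the paper: you enumerate the same four boundary strata via the same mechanisms (marked point hitting $\partial V=\R^3$, curve escaping to infinity, marked point sliding to the boundary of the domain and hence landing on $V\cap L=\gamma$, and SFT breaking at a degree-zero chord), just with more detail on transversality and gluing than the paper's terse version. The paper groups the escape-to-infinity and SFT-breaking cases together as ``the curve moving to the boundary of the moduli space of maps,'' but otherwise the decomposition and reasoning are identical.
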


\begin{proof}
The boundary of $\M(c,V)$ is given by three degenerations. First, the interior marked point can move to the boundary of $V$, which gives $\mathcal{M}(c,L)$. Second, the holomorphic curves can move to the boundary in the moduli space of maps. This corresponds here to the curve moving to infinity which gives $\mathcal{M}^{\infty}(c,\partial^{\infty}V)$, or to SFT-breaking which gives $\mathcal{M}(c,\boldsymbol{a},V)$. Third, the interior marked point can move to the boundary of the domain, which corresponds to $\mathcal{M}(c,\gamma)$.  
\end{proof}

Similarly, we have the following result.

\begin{lemma}\label{l:4chaincapL}
	The 1-dimensional moduli space $\mathcal{M}(c,\sigma_\tau)$ has a natural compactification with (oriented) boundary
	\[ 
	\mathcal{M}(c,\gamma) \ \cup \
	\mathcal{M}^{\infty}(c,\tau) \ \cup \
	\mathcal{M}(c,\boldsymbol{a},\sigma_\tau).
	\]
See Figure \ref{bdryL4_fig}. 
%	where $\mathcal{M}(c,\boldsymbol{a},\sigma_\tau)$ contains algebraically zero points since $y$ is a cycle.
\end{lemma}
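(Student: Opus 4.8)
The plan is to transcribe, with minor modifications, the argument just given for Lemma~\ref{l:boundary4-chaininsertion}. Recall that $\mathcal{M}(y,\sigma_\tau)$ parametrizes holomorphic disks on $L$ with a positive boundary puncture at $y$ and one boundary marked point mapping to the Borel--Moore $2$-chain $\sigma_\tau\subset L$, whose boundary is $\gamma+\tau$ with $\gamma=V\cap L$ lying in the interior of $L$ and $\tau\subset\Lambda_K$ lying at infinity. Exactly as before, transversality for a generic $J$ is automatic since every such disk has injective points near the positive puncture, so $\mathcal{M}(y,\sigma_\tau)$ is a smooth $1$-manifold; the content is identifying its ends and checking that together with them it forms a compact $1$-manifold with boundary.

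First I would list the codimension-one degenerations. (a) The boundary marked point approaches $\partial\sigma_\tau=\gamma+\tau$. Hitting $\gamma$ (in the interior of $L$) produces a disk with a boundary marked point on $\gamma$, i.e.\ an element of $\mathcal{M}(y,\gamma)$. Reaching $\tau$ forces the curve to escape into the symplectization end, since $\sigma_\tau$ meets infinity precisely along $\tau$; the limit is an $\R$-invariant configuration in $\R\times\Lambda_K$ (modulo the $\R$-action) with positive puncture at $y$ and boundary marked point on $\tau$, that is, an element of $\mathcal{M}^{\infty}(y,\tau)$. (b) The underlying map degenerates without the marked point reaching $\partial\sigma_\tau$; by Lemma~\ref{LK MK augment} there are no closed disk bubbles on $L$, so the only option is SFT-breaking at a degree-zero Reeb chord, producing $\mathcal{M}(y,\boldsymbol{a},\sigma_\tau)=\bigcup_j\mathcal{M}(y,a_j)\times\mathcal{M}(a_j,\sigma_\tau)$. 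There is no separate ``whole curve escapes to infinity'' stratum beyond case (a): if the entire map runs off to infinity the marked point runs off with it and, $\sigma_\tau$ being finite away from $\tau$, lands on $\tau$, which is the $\mathcal{M}^{\infty}(y,\tau)$ case already accounted for.

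Next I would check that each stratum really is a boundary of the compactification, by the standard gluing results: for $\mathcal{M}(y,\gamma)$ and $\mathcal{M}^{\infty}(y,\tau)$, gluing of a boundary marked point onto a chain, respectively into the symplectization end, as in \cite[Appendix A.4--6]{EkholmSmithmathann} and the discussion of $\mathcal{M}(y,V)$ above; for $\mathcal{M}(y,\boldsymbol{a},\sigma_\tau)$, SFT gluing as in \cite[Section~10]{CELN}. Finally, the parenthetical assertion that $\mathcal{M}(y,\boldsymbol{a},\sigma_\tau)$ has algebraically zero points follows just as for the corresponding terms in Lemmas~\ref{l:boundingchainann} and \ref{l:boundary4-chaininsertion}: its signed count is the pairing of $\partial_1^\epsilon y$ with the vector whose $a_j$-component counts disks from $a_j$ with boundary marked point on $\sigma_\tau$, and $\partial_1^\epsilon y=0$ because $y$ is a cycle.

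The step I expect to be the main obstacle is the bookkeeping in case (a): one must verify that ``the body of the curve escapes to infinity'' and ``the marked point hits $\tau$'' are the same codimension-one stratum, so that nothing is double-counted, and that the resulting object at infinity is a single non-trivial $\R$-invariant disk rather than a taller holomorphic building with several symplectization levels. Both points are settled by the familiar index and area count showing that exactly one non-trivial level can survive, combined with the fact that $\partial^\infty\sigma_\tau=\tau$; this is the only place where the geometry of $\sigma_\tau$ (as opposed to that of $V$ in the previous lemma) enters in an essential way.
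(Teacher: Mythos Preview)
Your proposal is correct and follows essentially the same approach as the paper. The only difference is organizational: the paper groups the stratum $\mathcal{M}^{\infty}(y,\tau)$ under ``the curve moves to the boundary in the moduli space of maps'' (alongside SFT-breaking), while you group it under ``the marked point reaches $\partial\sigma_\tau=\gamma+\tau$'' and then argue that hitting $\tau$ is the same as the curve escaping to infinity---a point you address explicitly in your final paragraph.
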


\begin{proof}
	The boundary of $\M(c,\tau)$ is given by two degenerations. First, the marked point can move to the boundary of $\sigma_\tau$, which gives $\mathcal{M}(c,\gamma)$. Second, the holomorphic curves can move to the boundary in the moduli space of maps. This corresponds here to the curve moving to infinity which gives $\mathcal{M}^{\infty}(c,\tau)$, or to SFT-breaking which gives $\mathcal{M}(c,\boldsymbol{a},\sigma_\tau)$. 
\end{proof}

\subsection{Counts of Floer strips}
In the previous section we related the first two boundary terms in \eqref{eq:intpointsy} with curve counts at infinity. Here we will carry out a similar analysis of the last two terms in \eqref{eq:intpointsy}.  

We now take $L=M_{K}^{\delta}$ and as above we denote by $\xi=\{\xi_{i}\}$ the intersection points in $L\cap \R^{3}$ of index 1 and by $\eta=\{\eta_i\}$ those of index 2. We consider the following moduli spaces, illustrated in Figure \ref{bdry6_fig}.
\begin{itemize}
\item The 1-dimensional moduli spaces $\mathcal{M}(c,\xi_{i},\eta_{j})$ of holomorphic disks in $T^*\R^3$ with one positive boundary puncture asymptotic to $c$, two boundary components mapping to $M_{K}^{\delta}$, one boundary component mapping to $\R^3$, and two boundary punctures asymptotic to Lagrangian intersection points $\xi_{i}$ and $\eta_{j}$. 
%The moduli space is oriented as described in Section \ref{}.
\item The 0-dimensional moduli spaces $\mathcal{M}(c,\xi_{i},\xi_{j})$ of holomorphic disks in $T^*\R^3$ with one positive boundary puncture asymptotic to $c$, two boundary components mapping to $M_{K}^{\delta}$, one boundary component mapping to $\R^3$, and two boundary punctures asymptotic to Lagrangian intersection points $\xi_{i}$ and $\xi_{j}$. 
%The moduli space is oriented as described in Section \ref{}.
\item The 0-dimensional moduli spaces $\mathcal{M}(c,\eta_{i},\eta_{j})$ of holomorphic disks in $T^*\R^3$ with one positive boundary puncture asymptotic to $c$, two boundary components mapping to $M_{K}^{\delta}$, one boundary component mapping to $\R^3$, and two boundary punctures asymptotic to Lagrangian intersection points $\eta_{i}$ and $\eta_{j}$. 
%The moduli space is oriented as described in Section \ref{}.
%\item The 0-dimensional moduli space $\mathcal{M}(c,\xi_{i},\xi_{j})$ of holomorphic disks with a positive boundary puncture at $c$ and two Lagrangian intersection punctures at intersection points $\xi_{i}$ and $\xi_{j}$.
%\item The 0-dimensional moduli space $\mathcal{M}(c,\eta_{i},\eta_{j})$ of holomorphic disks with a positive boundary puncture at $c$ and two Lagrangian intersection punctures at intersection points $\eta_{i}$ and $\eta_{j}$.
\item The union of products of moduli spaces 
\[ 
\mathcal{M}(c,\boldsymbol{a},\xi_{i},\eta_{j})=\bigcup_{a_{k}\in\boldsymbol{a}}
\mathcal{M}(c,a_{k})\times\mathcal{M}(a_{k},\xi_{i},\eta_{j}),
\]
where the union is over Reeb chords $a_k$ of index 0. 
%The moduli space is oriented as described in Section \ref{}.
\item The 0-dimensional moduli space $\mathcal{M}(\xi_{i},\eta_{j})$ of holomorphic strips with one boundary component on $M_{K}^{\delta}$ and the other on $\R^3$, one boundary puncture asymptotic to $\xi_{i}$ and the other asymptotic to $\eta_{j}$. 
\end{itemize}
Here, all the moduli spaces are oriented as described in Section \ref{ssec:orientdisks}.

\begin{figure}%{r}{0.35\textwidth}
  \begin{center}
    \def\svgwidth{1.05\textwidth}
    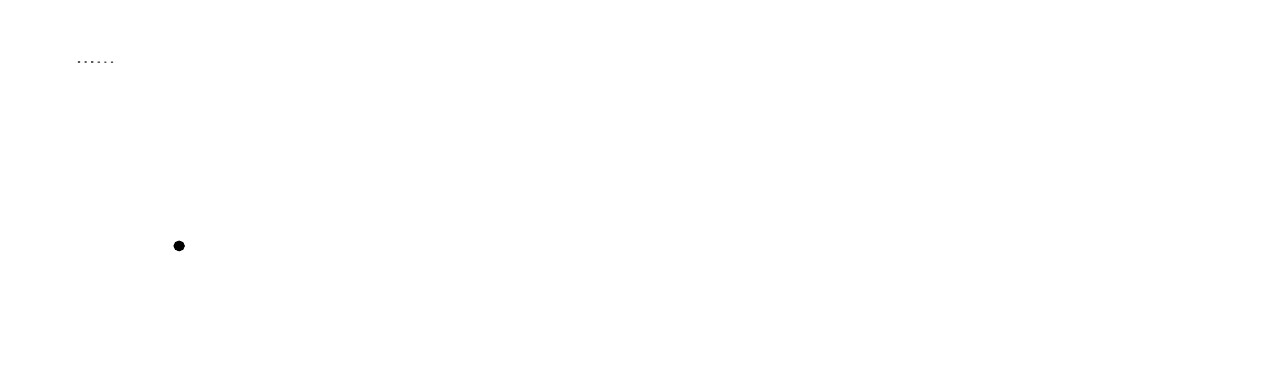
  \end{center}
  \caption{The boundary of $\MMu$}
  \label{bdry6_fig}
\end{figure}

Recall that $\mathcal M(c)$ is the moduli space of holomorphic disks with boundary on $\Md$ and one positive boundary puncture asymptotic to $c$. 

\begin{lemma}\label{l:boundaryoftriangles}
The (oriented) boundary of the 1-dimensional moduli space $\mathcal{M}(c,\xi_{i},\eta_{j})$ consists of the moduli spaces
\begin{align*} 
&\bigcup_{k} \mathcal{M}(\xi_{i},\eta_{k})\times \mathcal{M}(c,\eta_{k},\eta_{j}) \ \cup \\
&\bigcup_{k} \mathcal{M}(c,\xi_{i},\xi_{k})\times\mathcal{M}(\xi_{k},\eta_{j}) \ \cup \
\mathcal{M}(c)\times_{\ev_{\partial}} \mathcal{M}(\xi_{i},\eta_{j}) \ \cup \
\mathcal{M}(c,\boldsymbol{a},\xi_{i},\eta_{j}).
\end{align*}
%where $\mathcal{M}(c,\boldsymbol{a},\xi_{i},\eta_{j})$ has algebraically zero points since $y$ is a cycle. 
Here, $\ev_{\partial}$ are evaluation maps at marked points in boundary components mapping to $\Md$. See Figure \ref{bdry6_fig}. 
\end{lemma}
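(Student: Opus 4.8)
The plan is to run the standard SFT/Floer compactness-and-gluing argument for the $1$-dimensional moduli space $\mathcal{M}(y,\xi_{i},\eta_{j})$, whose domains are disks with one positive puncture (at the Reeb chord $y$) and two switching boundary punctures on $M_{K}^{\delta}\cup\R^{3}$. First I would secure transversality: every curve in $\mathcal{M}(y,\xi_{i},\eta_{j})$ is somewhere injective near its positive puncture at $y$, and near the switching punctures $\xi_{i},\eta_{j}$ one has the local Lagrange-surgery models of \cite{CELN,AENV} (see also \cite[Lemma 4.1]{ED-RT}); a generic almost complex structure, compatible with the normalizations already imposed, then cuts out $\mathcal{M}(y,\xi_{i},\eta_{j})$ and all the lower-dimensional moduli spaces appearing in the statement transversely. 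I would also record at the outset that no closed holomorphic disk with boundary on $M_{K}^{\delta}$ can bubble off, by Lemma \ref{LK MK augment}$(i)$ (for $\delta$ small and below the relevant energy threshold), and no disk with boundary on $\R^{3}$ can bubble off by exactness of $\R^{3}$, so the domain remains a nodal disk throughout — in particular no genus appears and there are no annulus degenerations to consider.

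Next I would apply SFT compactness together with Gromov compactness for curves with switching boundary conditions to enumerate the possible limits of a sequence in $\mathcal{M}(y,\xi_{i},\eta_{j})$ leaving every compact set. There are exactly four sources of non-compactness. First, an $\R$-invariant strip may split off at the positive puncture, i.e.\ a curve in $\mathcal{M}(y,a_{k})$ breaks at a degree-zero Reeb chord $a_{k}$, leaving a disk in $\mathcal{M}(a_{k},\xi_{i},\eta_{j})$; this is the term $\mathcal{M}(y,\boldsymbol{a},\xi_{i},\eta_{j})$. Second and third, a Floer strip may break at one of the two switching punctures; breaking at these punctures produces the two product terms $\bigcup_{k}\mathcal{M}(\xi_{i},\eta_{k})\times\mathcal{M}(y,\eta_{k},\eta_{j})$ and $\bigcup_{k}\mathcal{M}(y,\xi_{i},\xi_{k})\times\mathcal{M}(\xi_{k},\eta_{j})$, where in each case the repeated intersection point is the glued asymptotic. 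Fourth, the conformal modulus of the domain may degenerate so that the domain develops a boundary node separating the $\R^{3}$-arc joining $\xi_{i}$ to $\eta_{j}$ from the rest; since that node lies on $M_{K}^{\delta}$, the limit is a disk in $\mathcal{M}(y)$ (with boundary entirely on $M_{K}^{\delta}$) glued along a point of $M_{K}^{\delta}$ to a strip in $\mathcal{M}(\xi_{i},\eta_{j})$, i.e.\ a point of $\mathcal{M}(y)\times_{\ev_{\partial}}\mathcal{M}(\xi_{i},\eta_{j})$. A dimension count using our gradings excludes every other degeneration, in particular any producing extra switching punctures or a configuration of negative expected dimension.

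I would then supply the gluing statements converse to this enumeration, so that each of the four strata bounds $\mathcal{M}(y,\xi_{i},\eta_{j})$ along a half-open collar: SFT gluing at the degree-zero chord $a_{k}$ for the first term (as in \cite[Section 10]{CELN}); SFT-type gluing at the switching punctures $\eta_{k}$, $\xi_{k}$ for the second and third; and for the fourth term, gluing a disk to a strip meeting with transverse boundary evaluation at a point of $M_{K}^{\delta}$ — formally the standard boundary-node gluing (cf.\ \cite{FOOO, EkholmSmithjams}) combined with the colliding-corner degeneration analysis of \cite{AENV, CELN, ED-RT}. Finally I would note that $\mathcal{M}(y,\boldsymbol{a},\xi_{i},\eta_{j})$ is algebraically zero: summing over $k$, its signed count pairs the $\boldsymbol{a}$-component of the linearized differential $\partial_{1}^{\epsilon_{M_{K}}}(y)$ with the counts $\#\mathcal{M}(a_{k},\xi_{i},\eta_{j})$, and $\partial_{1}^{\epsilon_{M_{K}}}(y)=0$ because $y$ is a cycle.

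The main obstacle I expect is the fourth degeneration: establishing the gluing theorem at the instant when the two switching boundary punctures collide. This is not a pure asymptotic gluing at a Reeb chord but a mixed phenomenon — a collapsing $\R^{3}$-arc producing a strip bubble — so it requires combining the local surgery models of \cite{AENV, CELN} with a transverse-evaluation gluing, and verifying that the resulting collar is exactly a half-open interval (no spurious ends, correct signs) is where the real work lies. By contrast, the transversality claims and the other three degenerations are routine adaptations of well-documented arguments.
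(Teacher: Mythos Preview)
Your approach is correct and matches the paper's (much terser) proof, which simply enumerates the same three kinds of breaking---at a Lagrangian intersection point, at a boundary node, and at a Reeb chord---without further elaboration. Your concern about the boundary-breaking term $\mathcal{M}(y)\times_{\ev_{\partial}}\mathcal{M}(\xi_{i},\eta_{j})$ is overcautious: no Lagrange-surgery models from \cite{AENV,CELN,ED-RT} are needed here, since $M_K^{\delta}$ and $\R^3$ already intersect transversely and the strip in $\mathcal{M}(\xi_i,\eta_j)$ is an ordinary Floer strip, so this is precisely the standard boundary-node gluing you already cite (\cite{FOOO}, \cite[Section 6]{EkholmSmithjams}); likewise the breaking at $\xi_k,\eta_k$ is ordinary gluing at a transverse Lagrangian intersection (cf.\ \cite[Appendix A.4--6]{EkholmSmithmathann}) rather than anything SFT-specific.
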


\begin{proof}
The contributions to the boundary of $\mathcal{M}(c,\xi_{i},\eta_{j})$ correspond to three types of breaking. Breaking at an intersection point gives  
$\bigcup_{k} \mathcal{M}(\xi_{i},\eta_{k})\times \mathcal{M}(c,\eta_{k},\eta_{j})$ and
$\bigcup_{k} \mathcal{M}(c,\xi_{i},\xi_{k})\times\mathcal{M}(\xi_{k},\eta_{j})$. Boundary breaking gives $\mathcal{M}(c)\times_{\ev_{\partial}} \mathcal{M}(\xi_{i},\eta_{j})$, where the orientation is induced from the orientations of the moduli spaces, the boundaries of the domains and the orientation of $M_K^{\delta}$. Finally, SFT-breaking at a Reeb chord gives $\mathcal{M}(c,\boldsymbol{a},\xi_{i},\eta_{j})$. 
\end{proof}

%{\color{blue} one reason to pick the $\gamma_{ij}$ as below, connecting paths to some basepoint, instead of allowing them to be chosen freely, is remark 6.26}

Our next goal is to relate the moduli space $\mathcal{M}(c)\times_{\ev_{\partial}} \mathcal{M}(\xi_{i},\eta_{j})$ with curve counts at infinity.  Lemma \ref{l:brokentriangles} below will be an analogue of Lemma \ref{l:boundingchainann}, replacing the annuli with strips.
Pick smooth arcs $\gamma_{ij}$ in $\Md$ connecting the intersection points $\xi_{i}$ and $\eta_{j}$ to some fixed basepoint in $\Md$. Let $U$ be the closure of a small neighborhood of all such arcs which is topologically a disk. In order to remove geometrically irrelevant evaluation conditions on moduli spaces, we will count intersections in the quotient space $M_K^{\delta}/U$, which is naturally homotopy equivalent to $M_K^{\delta}$, compare \cite[Section 3.2]{EkholmLekili2}. The advantage of the quotient space is that boundary arcs of disks between $\xi_j$ and $\eta_j$ map to closed curves. Then for every $u\in\M(\xi_{i},\eta_{j})$, pick a bounding chain $\tilde\sigma_{u}$ in $\Md/U$ for the loop corresponding to its boundary component in $M_K^{\delta}$. 

%
%{\color{blue}  IN THE CURRRENT VERSION THE $\gamma_ij$ intersection should be removed from Fig 12.
%
%OLD:
%Pick reference paths $\gamma_{ij}$ in $\Md$ connecting the intersection points $\xi_{i}$ and $\eta_{j}$. This will be done by fixing a basepoint in $\Md$, picking smooth paths $\gamma_{\xi_{i}}$ and $\gamma_{\eta_{j}}$ in $\Md$ connecting ${\xi_{i}}$ and $\eta_{j}$ to the basepoint, and taking $\gamma_{ij} = \gamma_{\xi_i} \cup -\gamma_{\eta_j}$. For every $u\in\M(\xi_{i},\eta_{j})$, pick a bounding chain $\tilde\sigma_{u}$ in $\Md$ for the loop in $\Md$ obtained by concatenating the boundary component of $u$ in $\Md$ with the reference path $\gamma_{ij}$. 
%}

We write $\mathcal{M}(c,\xi_i,\eta_j,\tilde\sigma)$ for the 1-dimensional moduli space of pairs $(u_1,u_2)$, where $u_1$ is a holomorphic strip between $\xi_i$ and $\eta_j$, and $u_2$ is a disk with boundary on $\Md$, one positive boundary puncture asymptotic to $c$ and a boundary marked point intersecting $\tilde\sigma_{u}$ transversely. See Figure \ref{bdry7_fig}. 

To describe the boundary of $\mathcal{M}(c,\xi_i,\eta_j,\tilde\sigma)$, we need the following 
0-dimen-sional moduli spaces:
\begin{itemize}
\item $\mathcal{M}^{\infty}(c,\xi_i,\eta_j,\tilde\sigma)$ is a moduli space of pairs $(u_1,u_2)$, where $u_1$ is a holomorphic strip between $\xi_i$ and $\eta_j$, and $u_2$ is a disk in $\R\times ST^*\R^3$ with boundary on $\R\times \Lambda_K$, one positive boundary puncture asymptotic to $c$ and one boundary marked point intersecting $\partial^\infty\tilde\sigma_{u}$ transversely in $\Lambda_K$. This space is oriented from by the orientation of the moduli spaces, the boundary of the disk, and the intersection in $\Lambda_K$. 
%\item $\mathcal{M}(c,\gamma_{ij})$ is a moduli space of pairs $(u_1,u_2)$, where $u_1$ is a holomorphic strip between $\xi_i$ and $\eta_j$, and $u_2$ is a disk in $T^*\R^3$ with boundary on $\Md$, one positive boundary puncture asymptotic to $c$ and one boundary marked point intersecting $\gamma_{ij}$ transversely. This space is oriented from by the orientation of the moduli spaces, the boundary of the disk, and the intersection in $M_K^{\delta}$. 
\item $\mathcal{M}(c,\boldsymbol{a},\xi_i,\eta_j,\tilde\sigma)$ is the union $\bigcup_{k}\mathcal{M}(c,a_{k})\times\mathcal{M}(a_{k},\xi_i,\eta_j,\tilde\sigma)$.
\end{itemize}

%
% BELOW OLD VERSION
%
%We write $\mathcal{M}(c,\xi_i,\eta_j,\tilde\sigma)$ for the moduli space of disks with a positive boundary puncture at $y$, and a boundary marked point mapping to $\tilde\sigma_{u}$, where $u$ is a holomorphic strip between $\xi_i$ and $\eta_j$, see Figure \ref{bdry7_fig}. Also, let $\mathcal{M}^{\infty}(c,\xi_i,\eta_j,\tilde\sigma)$ denote the moduli space of disks at infinity with a positive boundary puncture at $y$ and a boundary marked point mapping to $\tilde\sigma_u$, and $\mathcal{M}(c,\gamma_{ij})$ the moduli space of disks with a positive boundary puncture at $y$ and a boundary marked point mapping to $\gamma_{ij}$. Finally, define $\mathcal{M}(c,\boldsymbol{a},\xi_i,\eta_j,\tilde\sigma)=\bigcup_{k}\mathcal{M}(c,a_{k})\times\mathcal{M}(a_{k},\xi_i,\eta_j,\tilde\sigma)$.

\begin{figure}%{r}{0.35\textwidth}
  \begin{center}
    \def\svgwidth{0.8\textwidth}
    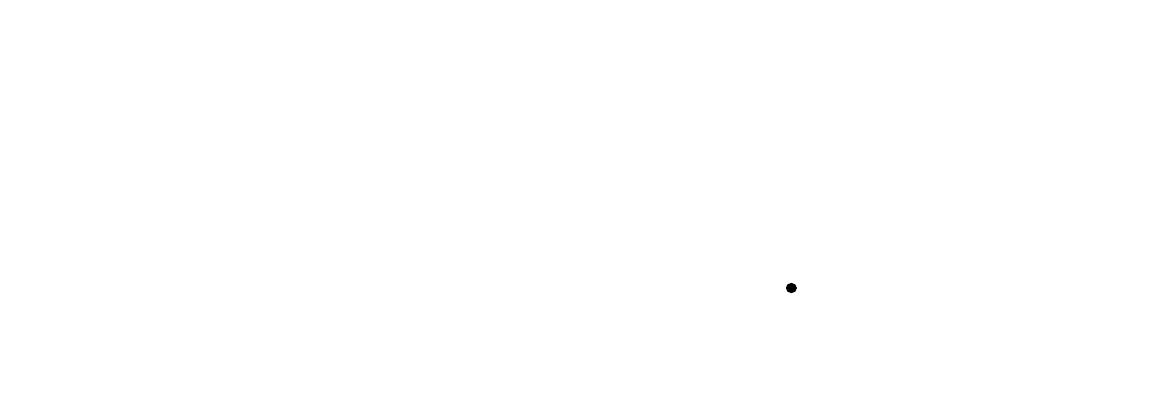
  \end{center}
  \caption{The boundary of $\MMz$}
  \label{bdry7_fig}
\end{figure}

\begin{lemma}\label{l:brokentriangles}
	The (oriented) boundary of the moduli space $\mathcal{M}(c,\xi_i,\eta_j,\tilde\sigma)$ is
	\[ 
	\mathcal{M}^{\infty}(c,\xi_i,\eta_j,\tilde\sigma) \ \cup \
	\mathcal{M}(c)\times_{\ev_{\partial}} \mathcal{M}(\xi_{i},\eta_{j}) \ \cup \
	%\mathcal{M}(c,\gamma_{ij}) \ \cup \
	\mathcal{M}(c,\boldsymbol{a},\xi_i,\eta_j,\tilde\sigma).
	\]
See Figure \ref{bdry7_fig}. 
%	where $\mathcal{M}(y,\boldsymbol{a},\xi_i,\eta_j,\tilde\sigma)$ has algebraically zero points since $y$ is a cycle, and where we can choose $\gamma_{ij}$ so that $\mathcal{M}(y,\gamma_{ij})$ has algebraically zero points.
\end{lemma} 

\begin{proof}
	The boundary component $\mathcal{M}^{\infty}(c,\xi_i,\eta_j,\tilde\sigma)$ corresponds to when the disk $u_2$ moves to infinity, and $\mathcal{M}(c,\boldsymbol{a},\xi_i,\eta_j,\tilde\sigma)$ corresponds to SFT-breaking of $u_2$. The remaining terms correspond to the boundary marked point in $u_2$ moving to the boundary of $\tilde\sigma_{u_1}$.
%	To see that the last statement is true, note that $\partial_{x}F(1,\mu,1)\ne 0$ by Proposition \ref{KCH1 generator}(2) and add to the reference path $\gamma_{ij}$ a suitable multiple of the curve $p$ near infinity.
\end{proof}

\section{From knot contact homology to the Alexander polynomial} \label{sec:LCH computations}
%{linearized homology computations}

In this section we prove Theorems \ref{t:linearizedhomology} and \ref{T:Alex Aug}. The argument will be based on combining the moduli space analysis of Section \ref{sec:main proof} with computations of the linearized contact homologies associated to specific augmentations of $\Lambda_K$.
%induced by a class of Lagrangian fillings, by relating Reeb chords to geodesics and the space of paths starting and ending on the knot. 
%The result of the calculation gives structural results for augmentation varieties of the dg-algebras of Legendrian conormals of knots. 

\subsection{Linearized contact homology and the augmentation variety} \label{LCH VK}
We will now establish some basic properties of augmentation varieties. We first state Propositions \ref{full V_K 2d} and \ref{KCH1 generator} and discuss some immediate consequences. Later we will turn to the proofs of these propositions. Recall that associated to the conormal Lagrangian $L_K$ there is an augmentation of $\mathcal A_K$ with $(\lambda,\mu,Q) = (1,1,1)$. Denote by $\boldsymbol{\epsilon_0}$ the corresponding point in $\tilde V_K \subset (\C^*)^3 \times \C^{\boldsymbol{a}}$. 
\begin{proposition} \label{full V_K 2d}
Let $K\subset \R^{3}$ be any knot.
% and write $(\boldsymbol{1},\boldsymbol{0})$ for the point $(\lambda,\mu,Q,\boldsymbol{a})=(1,1,1,\boldsymbol{0}) \in  (\C^*)^3 \times \C^{\boldsymbol{a}}$.
\begin{itemize}
\item There is an open neighborhood of $\boldsymbol{\epsilon_0} \in  (\C^*)^3 \times \C^{\boldsymbol{a}}$ with respect to the {\em Zariski topology}, in which the full augmentation variety $\tilde V_K$ is a smooth two-dimensional complex algebraic variety. 
\item There is an open neighborhood $U$ of $\boldsymbol{\epsilon_0} \in (\C^*)^3 \times \C^{\boldsymbol{a}}$ with respect to the {\em standard metric topology}, in which $\tilde V_K$ admits a holomorphic parametrization by the coordinates $(\lambda,\mu)$.
\end{itemize}
\end{proposition}

The following are useful consequences of this result. 

%Denote by $\epsilon_0$ the augmentation corresponding to $(\boldsymbol{1},\boldsymbol{0})\in  (\C^*)^3 \times \C^{\boldsymbol{a}}$ as in Proposition \ref{full V_K 2d}. This result has the following consequence:

\begin{corollary} \label{Aug exists}
For every $K$, the augmentation variety $V_K$ has dimension 2 or 3. The ideal $I(V_K)$ is principal, hence the augmentation polynomial $\Aug_K$ exists (but might be 0).
\end{corollary}
\begin{proof}
Since $\tilde V_K$ is two-dimensional and parametrized by $(\lambda,\mu)$ near $\boldsymbol{\epsilon_0}$, the maximal-dimensional components of $\mathrm{Cl}\left(\pi(\tilde V_K)\right) \subset (\C^*)^3$ are at least 2-dimensional. Hence, $\dim_\C(V_K)\geq 2$. If this dimension is 3, then $V_K = (\C^*)^3$ and $\Aug_K = 0$. If the dimension is 2, then $V_K$ can be described as the zero set of some polynomial. 
%(see for instance \cite[Chapter I, Proposition 1.13]{Hartshorne}). 
\end{proof}

\begin{remark}
Corollary \ref{Aug exists} could also be deduced from results on the cord algebra in \cite{CELN} in combination with \cite[Section 6.11]{AENV}, see also \cite{Cornwell}.
\end{remark}

\begin{remark} \label{NgConjecture}
Corollary \ref{Aug exists} is compatible with \cite[Conjecture 5.3]{NgTopological}, which predicts that $\Aug_K = 0$ should not be possible. In Section \ref{elimination} below, we observe that one condition that implies $\Aug_K \neq 0$ is if $I_K=R\cap \tilde I_K$ is not the zero ideal in $R$ (recall Section \ref{sec:aug var poly} for the relevant notation).
\end{remark}

%{\color{blue} I think we also need to refer to Lemma \ref{epsilon LK v epsilon MK} in the next proof. Maybe we should merge that Lemma with Example \ref{LK and MK}?}

\begin{corollary}
For every $K$, $V_K$ contains the points $(\lambda,1,1)$ and $(1,\mu,1)$, for all $\lambda \in \C^*$ and all $\mu \in \C^*$. 
\end{corollary}
\begin{proof}
Let $U$ be the open set in Proposition \ref{full V_K 2d}. As we saw in Example \ref{LK and MK}, the set $\pi(U)$ contains open neighborhoods $U_\lambda$ and $U_\mu$ (in the standard metric topology) of the point $(1,1,1)$ in the lines $\{(\lambda,1,1)\}\subset (\C^*)^3$ and $\{(1,\mu,1)\}\subset (\C^*)^3$, respectively. On the other hand, it follows from the proof of Corollary \ref{Aug exists} that $\pi(U)$ is contained in $V_K$. Since $V_K$ is an affine set containing the non-empty open set $U_\lambda$ of the line $\{(\lambda,1,1)\}$, it contains the whole line. Similarly, it contains the whole line $\{(1,\mu,1)\}$.
\end{proof}

The next result concerns the linearized knot contact homology of augmentations near $\boldsymbol{\epsilon_0}$. We point out that the symbol $\partial$ is used with two different meanings in the statement: in the definition of $F$, the term $\partial (y(\lambda,\mu))$ denotes the contact homology differential of $y(\lambda,\mu)$; afterwards, $\partial_Q F$ denotes the partial derivative of the function $F(\lambda, \mu, Q)$ with respect to the variable $Q$.  

%\boxx{Zariski or Euclidean below?}

\begin{proposition}\label{prp:about F} 
${\quad}$
\begin{enumerate}
\item There is an open neighborhood $U'$ of $\boldsymbol{\epsilon_0} \in (\C^*)^3 \times \C^{\boldsymbol{a}}$ with respect to the standard metric topology such that, for every augmentation $\epsilon \in U'$, we have 
$$
KCH_k^{\epsilon}(K;\C) \cong 
\begin{cases}
\C & \text{if } k=1 \text{ or } 2 \\
0 & \text{otherwise}
\end{cases}.
$$
\item Let $U$ be as in Proposition \ref{full V_K 2d}, and denote the parametrization of $\tilde V_{K}$ in $U\cap U'$ by $\epsilon_{\lambda,\mu}$. Then there is a linear combination $y(\lambda,\mu)$ of Reeb chords, with coefficients analytic in $\lambda$ and $\mu$, representing a generator of $KCH_1^{\epsilon_{\lambda,\mu}}(K; \C)$ such that the following holds. Write 
  $$
  F(\lambda, \mu, Q) \coloneq \partial (y(\lambda,\mu))|_{\boldsymbol{a} \mapsto \epsilon_{\lambda,\mu}(\boldsymbol{a})},
  $$
  where $\partial$ denotes the differential in $\mathcal A_K$ and $\boldsymbol{a}=(a_{1},\dots,a_{n})$ denotes the ordered set of all index zero Reeb chords. Then, 
  \begin{equation} \label{partial Q F neq 0}
   (\partial_Q F)|_{\lambda = \mu = Q = 1} \neq 0  
  \end{equation}
and
  \begin{equation} \label{partial lambda F neq 0}
   (\partial_\lambda F)_{\lambda = Q = 1} \text{ is non-constant, as a function of }\mu. 
  \end{equation}
Since the function $(\partial_\lambda F)_{\lambda = Q = 1}$ is holomorphic, it has isolated zeros.
 \end{enumerate} \label{KCH1 generator}
\end{proposition}

Propositions \ref{full V_K 2d} and \ref{KCH1 generator} imply Theorem \ref{t:linearizedhomology}. Note that the restriction of $d_{\boldsymbol{\epsilon_0}}\pi$ to $\tilde V_K$ sends vectors tangent to the curve of augmentations associated to the filling $L_K$ to $\Span(\partial_x)$. Similarly, $d_{\boldsymbol{\epsilon_0}}\pi$ sends vectors tangent to the curve of augmentations associated to the filling $M_K$ to $\Span(\partial_p)$.

\begin{remark} \label{NgAlexander}
In \cite[Proposition 4.4]{NgFramed}, it is shown that there is an isomorphism of $\Q[\mu^{\pm 1}]$-modules
\begin{align*}
 KC&H^{\epsilon_{M_K}}_1(K; \Q) \cong\\ 
& (H_1(\tilde M_K;\Q) \oplus \Q[\mu^{\pm 1}]) \otimes_{\Q[\mu^{\pm 1}]} (H_1(\tilde M_K;\Q) \oplus \Q[\mu^{\pm 1}]) \oplus (\Q[\mu^{\pm 1}])^m, 
\end{align*}
for some $m\geq 0$, where $\tilde M_K$ is the universal Abelian cover of the knot complement. This already implies that the Alexander polynomial can be obtained from linearized knot contact homology. Proposition \ref{KCH1 generator}(1) implies that $m=0$ for every knot $K$. 
\end{remark}

%{\color{blue} The previous version of the remark is commented below in the tex file. It was about a result of Ng about KCH linearized with respect to a $\Q[\mu^{\pm}]$-valued augmentation. The new version of the remark refers to a similar but different result of Ng about KCH linearized with respect to a $\Z$-valued augmentation. This is more directly related to the augmentations we consider. It would be interesting to think about whether the $m$ is the same in the two versions of Lenny's statement, but I did not think that question through. 
%
%\
%}
%
%\begin{remark} \label{NgAlexander}
%In \cite[Proposition 4.4]{NgFramed}, it is shown that there is an isomorphism of $\Q[\mu^{\pm 1}]$-modules
%$$
%KCH^{\epsilon_{M_K}}_1(K; \Q) \cong (H_1(\tilde M_K;\Q) \oplus \Q[\mu^{\pm 1}]) \otimes_{\Q[\mu^{\pm 1}]} (H_1(\tilde M_K;\Q) \oplus \Q[\mu^{\pm 1}]) \oplus (\Q[\mu^{\pm 1}])^m, 
%$$
%for some $m\geq 0$, where $\tilde M_K$ is the universal Abelian cover of the knot complement. This already implies that the Alexander polynomial can be obtained from linearized knot contact homology. Proposition \ref{KCH1 generator}(1) implies that $m=0$ for every knot $K$. 
%\end{remark}

We will now begin a discussion that will lead up the the proofs of Propositions \ref{full V_K 2d} and \ref{KCH1 generator}.

\subsection{A 1-parameter family of generators} \label{sssec:1-parameter generators}
We begin by recalling some results from \cite{EkholmNgHigherGenus}. Consider the 1-parameter family of augmentations $\epsilon_{L_K}$ induced by $L_{K}$. This family is parametrized by $\lambda\in\C^{\ast}$, which can be thought of as the monodromy around $K$ of a flat connection on a trivial complex line bundle over $L_{K}$ (or over its deformation retract $K$).  
Let $KCC_*^{\epsilon_{L_K,\lambda}}(K;\C)$ denote the chain complex of knot contact homology, linearized with respect to this augmentation for a specific value of $\lambda\in\C^{\ast}$. We will obtain a topological model for $KCH_*^{\epsilon_{L_K,\lambda}}(K;\C)$. 

Let $\mathcal P_K$ denote the space of paths in $\R^3$ that start and end in $K$, and let $\mathcal P_K^0\subset \mathcal P_K$ denote the space of constant paths. There is a local coefficient system over $\mathcal P_K$ associated to the concatenation of group homomorphisms 
\begin{equation} \label{local systems}
\pi_1(\mathcal P_K) \to \pi_1(K)\times \pi_1(K) \to \pi_1(K) \stackrel{\lambda}{\to} \C^*,
\end{equation}
where $\pi_1(\mathcal P_K) = \pi_1(\mathcal P_K, p)$ for some choice of basepoint $p\in \pi_1(\mathcal P_K^0)$. The first map in \eqref{local systems} is induced by evaluation at the endpoints of a path, the second map is $(g_1,g_2)\mapsto g_1 g_2^{-1}$ in the group $\pi_1(K)$, and the last map is induced by the monodromy $\lambda$ of the flat connection on $K$. Let $C_*^{\lambda}(\mathcal P_K;\C)$ and $C_*^{\lambda}(\mathcal P_K^0;\C)$ be the corresponding singular chain complexes with local coefficients, and let $C_*^{\lambda}(\mathcal P_K,\mathcal P_K^0;\C) = C_*^{\lambda}(\mathcal P_K;\C)/ C_*^{\lambda}(\mathcal P_K^0;\C)$ be the relative chain complex. 

%{\color{blue} we should check again whether it is $(g_1,g_2)\mapsto g_1 g_2$ or $(g_1,g_2)\mapsto g_1 g_2^{-1}$ above. Thinking of this as counting intersections with the base point in the circle there should be input from the boundary orientation of the source which corresponds to g_1g_2^-1.}
%{\color{blue} it seems that we maybe have $(g_1,g_2)\mapsto g_1 g_2$ above, instead. That is not compatible with the action being trivial on $\Delta$ (I used to think that that would be a problem, but that doesn't seem to be the case now). If $\Theta^{\lambda}(\lambda c)=\lambda\Theta^\lambda(c)$, then maybe $C_*$ doesn't depend on $\lambda$ in any way?}

	Let 
	\begin{equation} \label{Theta}
		\Theta^{\lambda}\colon KCC_*^{\epsilon_{L_K,\lambda}}(K;\C) \to C_*^{\lambda}(\mathcal P_K,\mathcal P_K^0;\C),
	\end{equation}
	be the linear map defined as follows. If $c$ is a Reeb chord then let $\mathcal{N}(c,L_K\cup \R^{3})$ denote a moduli space of holomorphic maps $u\colon D\to T^{\ast} \R^{3}$, where $D$ is the unit disk with three boundary punctures at $1,i,-1$. We require $u$ to be asymptotic to $c$ at $i$, and to map the boundary segments between $i$ and $-1$ and between $1$ and $i$ to $L_{K}$. Also, $u$ maps the boundary segment between $-1$ and $1$ to $\R^{3}$, and it can be extended continuously at $1$ and $-1$. See Figure \ref{bdryN1_fig}. Restricting the maps $u$ to the boundary segment between $-1$ and $1$, $\mathcal{N}(c,L_K\cup \R^{3})$ parametrizes a family of paths in $\mathcal{P}_{K}$ and we define $\Theta^{\lambda}(c)$ as the chain parametrized by $\mathcal{N}(c,L_K\cup \R^{3})$. The following result appears as \cite[Lemma 4.5]{EkholmNgHigherGenus} (see also \cite[Theorem C]{CiliebakLatschevStringTopology} for a closed-string version of this result). We reproduce the argument for the reader's convenience.
\begin{lemma} \label{lemma: Theta isomorphism}
	 The map $\Theta^{\lambda}$ is a chain map and a quasi-isomorphism. 
\end{lemma}

\begin{proof}
	To see that $\Theta^{\lambda}$ is a chain map, we consider the codimension 1 strata of the boundary of $\mathcal{N}(c,L_{K}\cup\R^{3})$, as illustrated in Figure \ref{bdryN1_fig}. There are two such strata, corresponding to:
\begin{itemize}
\item breaking at a Reeb chord (see $\NNb$ in the figure);
\item the boundary arc in $\R^{3}$ shrinking to a constant path (see $\NNc$ in the figure). 
%\item those curves in $\NNa$ that map to $\partial (\Theta^\lambda(c))$, where $\partial$ denotes the differential in $C_*^\lambda(\mathcal P_K, \mathcal P_K^0)$ (see $\NNd$ in the figure). 
\end{itemize}
Gluing and compactness results presenting this as the codimension 1 boundary follow from \cite[Section 10.1]{ECL}. % (in that reference, we need the special case when there are only two boundary punctures). 

\begin{figure}%{r}{0.35\textwidth}
	\begin{center}
		\def\svgwidth{.6\textwidth}
		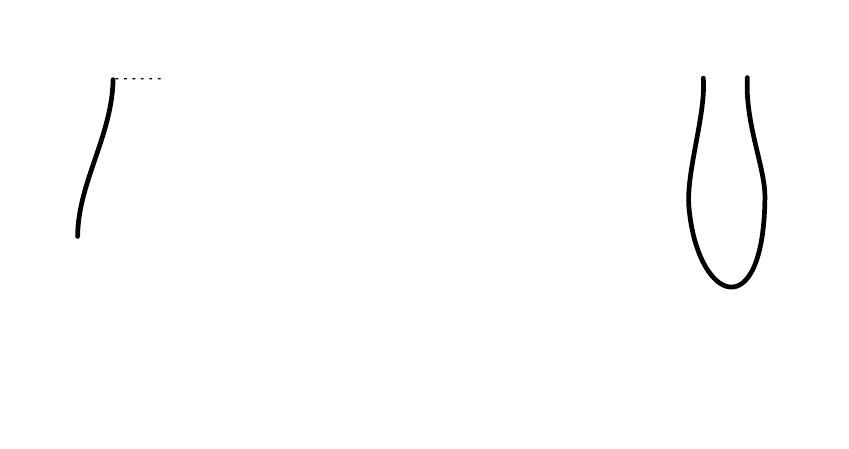
	\end{center}
	\caption{The boundary of $\NNa$. We abbreviate $C_*^\lambda(\mathcal P_K,\mathcal P_K^0;\mathbb C)$ as $C_*^\lambda$.}
	\label{bdryN1_fig}
\end{figure}	

	To see that $\Theta^{\lambda}$ is a quasi-isomorphism, we use the action filtration and a geodesic Morse model for $C_*^{\lambda}(\mathcal P_K,\mathcal P_K^0;\C)$. The generators of the two complexes can be identified and the map $\Theta^{\lambda}$ is upper triangular with $\pm1$ on the diagonal (coming from `trivial strips', with respect to the energy filtration). Details can be found in \cite{Asplund}, where the corresponding result is \cite[Theorem 4.1]{Asplund}; the fact that the map respects action follows from \cite[Proposition 4.7]{Asplund} and the upper triangular property follows from \cite[Theorem 4.12]{Asplund}.     
\end{proof}  

The previous result implies that we can compute $KCH_{\ast}^{\epsilon_{L_{K},\lambda}}(K;\C)$ by computing $H_*^{\lambda}(\mathcal P_K,\mathcal P_K^0;\C)$. 

\begin{lemma} \label{lem:KCHlambda}
For every $\lambda\in \mathbb C^*$
\begin{equation}\label{eq:basichomology}
KCH_k^{\epsilon_{L_K,\lambda}}(K;\C) \cong H_k^{\lambda}(\mathcal P_K,\mathcal P_K^0;\C) \cong H_k^{\lambda}(K\times K,\Delta;\C) \cong \begin{cases}
\C & \text{, if } k= 1,2 \\
0 & \text{, otherwise}
\end{cases}.
\end{equation}
\end{lemma}
\begin{proof}
Lemma \ref{lemma: Theta isomorphism} implies that the first isomorphism in \eqref{eq:basichomology} can be given by the map induced by $\Theta^\lambda$ on homology. 

For the second isomorphism, observe that the fiber of the evaluation at endpoints map $\mathcal{P}_{K}\to K\times K$ is the based loop space of $\R^{3}$, which is contractible. This map restricts to a bijective correspondence between the space of constant paths $\mathcal{P}_{K}^0 \subset \mathcal{P}_{K}$ and the diagonal $\Delta \subset K\times K$. Hence, the endpoints evaluation map induces an isomorphism between the long exact sequences of homology (with local coefficients specified by $\lambda \in \C^*$ as above) of the pairs $(\mathcal{P}_{K},\mathcal{P}_{K}^0)$ and $(K\times K, \Delta)$. This yields the second isomorphism in the statement.

The third isomorphism comes from the homology long exact sequence of the pair $(K\times K,\Delta)$.
\end{proof}

\begin{remark}
It is interesting to observe that 
$$
H_\ast^{\lambda}(K\times K;\C) \cong 
\begin{cases}
H_\ast (T^2;\C) & \text{ if } \lambda = 1 \\
0 & \text{ if } \lambda \neq 1
\end{cases},
$$
but that the ranks of the $H_k^{\lambda}(\Delta;\C)$ and of the $H_k^{\lambda}(K\times K,\Delta;\C)$ do not depend on $\lambda$. 
%$$
%\ldots \to H_{k}^{\lambda}(\Delta;\C) \to H_k^{\lambda}(K\times K;\C) \to H_k^{\lambda}(K\times K,\Delta;\C) \to H_{k-1}^{\lambda}(\Delta;\C) \to \ldots 
%$$
\end{remark}

We define another chain map 
\[ 
\Phi^{\lambda}\colon KCC_{\ast}^{\epsilon_{L_{K},\lambda}}(K;\C) \ \to \ C_{\ast-1}^{\lambda}(\mathcal{P}_{K}^{0};\C) \cong C_{\ast-1}^{\lambda}(K;\C)
\]
related to the above, as follows. We identify the space $\mathcal P_K^0$ of constant paths in $K$ with $K$ itself, and use the elements of the moduli space $\NNc$ in Figure \ref{bdryN1_fig} to define $\Phi^{\lambda}(c)$. More explicitly, $\Phi^{\lambda}(c)$ counts holomorphic disks $u$ in $T^*\R^3$ with boundary in $L_K$, one boundary puncture asymptotic to the Reeb chord $c$ of $\Lambda_{K}$, and one boundary puncture mapping to $K$. We fix a generic point $\xi\in K$ and, if $|c|=2$, then the disks $u$ contributing to $\Phi^{\lambda}(c)$ are those that send the boundary marked point to $\xi$. If $|c|\notin \{1,2\}$, then we set $\Phi^{\lambda}(c)=0$. (This map is essentially $d'_{\epsilon_0}$ in \cite[Lemma 4.6]{EkholmNgHigherGenus}.)

% If $|c|=1$, then $\Phi^{\lambda}(c)$ counts disks with a marked point on the boundary that maps to $K$. 

In the next result, $\delta\colon H_k^\lambda(\mathcal P_K,\mathcal P_K^0;\mathbb C) \to H_{k-1}^\lambda(\mathcal P_K^0;\mathbb C)  \cong H_{k-1}^{\lambda}(K;\C)$ denotes the connecting homomorphism in the homology long exact sequence of the pair $(\mathcal P_K,\mathcal P_K^0)$, with local coefficients specified by $\lambda\in \mathbb C^*$ as above.

\begin{lemma} \label{lem:PhiTheta}
$\left[\Phi^{\lambda}\right] = \delta\circ \left[\Theta^\lambda\right]$, where we use square brackets to denote the induced maps on homology.
\end{lemma}

\begin{proof}
By \eqref{eq:basichomology}, we only need to consider the cases where $k=1$ or $2$. Let $y\in KCC_{k}^{\epsilon_{L_{K},\lambda}}(K;\C)$ be a cycle. We will consider the boundary components in Figure \ref{bdryN1_fig}, for chords $c$ that contribute to $y$. Since $y$ is a cycle, the total contribution from curves in the moduli space $\mathcal{N}(c,\boldsymbol{a}, L_K\cup\R^3)$ in the right-hand side of Figure \ref{bdryN1_fig} vanishes. Curves in $\NNc$ are used to compute $[\Phi^{\lambda}(y)]$. Since the connecting homomorphism $\delta$ of a relative cycle is given by its boundary, the lemma follows.  
\end{proof}

%{\color{blue} in the next paragraph, we at least implicitly assume that we have fixed a model for $\Lambda_K$. maybe we should just take the one coming from braiding $K$ closely around the unknot}

For every $\lambda \in \mathbb C^*$ and every $k$, the differential $\partial_k^{\lambda}\coloneq\partial_k^{\epsilon_{L_{K},\lambda}}$ is a map from $KCC_k^\lambda(K;\mathbb C)$ to $KCC_{k-1}^\lambda(K;\mathbb C)$. The vector space $KCC_k^\lambda(K;\mathbb C)$ is $\mathbb C^{m_k}$, where $m_k$ is the number of chords of degree $k$. Since the $\partial_k^{\lambda}$ and $\Theta^\lambda$ depend analytically on $\lambda$ (as Laurent polynomials), we can define an analytic map $y\colon U \to \mathbb C^{m_1}$, where $U$ is some open neighborhood of 1 in $\mathbb C$ (with respect to the standard metric topology), such that:
\begin{itemize}
\item $\partial_1^{\lambda}(y(\lambda)) = 0$ for every $\lambda\in U$ and 
\item $[\Theta^\lambda(y(\lambda))] = 1 \in \mathbb C\cong H^\lambda_1(\mathcal P_K,\mathcal P_K^0;\mathbb C)$.
\end{itemize}

% as a collection of $\lambda$-dependent linear maps. Note that the matrix of $\partial^{\lambda}$ has entries that are analytic in $\lambda$. It follows from \eqref{eq:basichomology} that the kernel of $\partial_{1}^{\lambda}$ is an analytically varying field $Z_{1}^{\lambda}\subset KCC_{1}^{\lambda}(K;\C)$ of subspaces and that the image of $\partial_{2}^{\lambda}$ is a analytically varying codimension one subspace $B_{1}^{\lambda}\subset Z_{1}^{\lambda}$. Noting that the map $\Phi^{\lambda}$ depends analytically on $\lambda$, we find an analytic section $y(\lambda)$ of $Z_{1}^{\lambda}/B_{1}^{\lambda}$ near $\lambda=1$ such that $\Phi^{\lambda}(y(\lambda))=(1-\lambda)\xi_{0}$.  

\begin{lemma} \label{lem:Phi(y)}
Denoting the class of a point in $H^\lambda_0(K;\mathbb C)$ by $\xi_{0}$, we have 
$$\left[\Phi^{\lambda}(y(\lambda))\right]=(1-\lambda)\, \xi_{0}.$$  
\end{lemma}
\begin{proof}
Using Lemma \ref{lem:PhiTheta} and the connecting homomorphism $\delta\colon H_1^\lambda(K\times K,\Delta;\mathbb C)\to H_0^\lambda(\Delta;\mathbb C)$, we have
$$
\left[\Phi^{\lambda}(y(\lambda))\right] = \delta\left(\left[\Theta^{\lambda}(y(\lambda))\right] \right) = \delta(1) = (1-\lambda)\, \xi_0.
$$
The term $1-\lambda$ in the last identity comes from the local coefficient system used to twist the homology differentials. 
\end{proof}

\begin{definition} \label{function G}
Let
\begin{equation} \label{def G}
G(\lambda,\mu,Q) \coloneq \partial (y(\lambda)) |_{\boldsymbol{a} \mapsto \epsilon_{L_{K},\lambda}(\boldsymbol{a})},
\end{equation}
where $\partial$ denotes the differential in the DGA $\mathcal A_K$, viewed as a Laurent polynomial in $(\lambda,\mu,Q)$ and a polynomial in the degree zero Reeb chords $a_{i}$, and where we substitute each degree zero Reeb chord $a_{i}$ with $\epsilon_{L_{K},\lambda}(a_{i})$ (which is a function of $\lambda$). 
\end{definition}

Since $y(\lambda)$ depends analytically on $\lambda$ near $\lambda=1$, we have that $G(\lambda,\mu,Q)$ is an analytic function of $(\lambda,\mu,Q)$ in a neighborhood of $(1,1,1)$. The next results are about partial derivatives of $G$. 

\begin{lemma} \label{Phi(y)=d mu G}
Denoting the class of a point in $H^\lambda_0(K;\mathbb C)$ by $\xi_{0}$, we have 
$$
\left[\Phi^{\lambda}(y(\lambda))\right] = (\partial_\mu G )|_{\mu = Q = 1} \, \xi_{0}.
$$ 
\end{lemma}
\begin{proof}
Pick a non-compact surface $\sigma_K \subset L_K$, with a single boundary component going once around $K$ and with boundary at infinity in the curve $x$. The surface $\sigma_K$ gives a Borel--Moore homology between $K$ and the curve $x$, and we call it a bounding chain for $K$. Let $c$ be a Reeb chord of degree 1. Consider the moduli space of holomorphic disks in $T^*\R^3$ with boundary on $L_{K}$, with one boundary puncture asymptotic to $c$ and one boundary intersection with $\sigma_K$, with orientation induced by the orientation of the original moduli space, the orientation of the boundary of the disk, and the orientations of the bounding chain and $L_K$. This is represented on the left in Figure \ref{bdry0_fig}. The moduli space is 1-dimensional, with a natural compactification whose elements are represented in Figure \ref{bdry0_fig}.
 
\begin{figure}%{r}{0.35\textwidth}
  \begin{center}
    \def\svgwidth{.8\textwidth}
    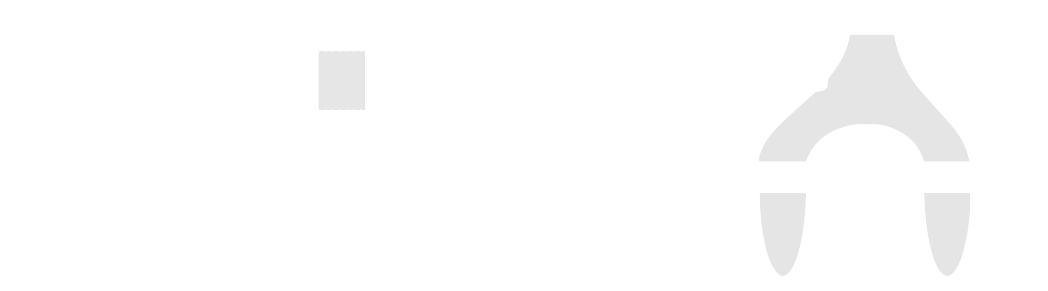
  \end{center}
  \caption{The boundary of the moduli space in the proof of Lemma \ref{Phi(y)=d mu G}. 
%The labels indicate the targets of the maps. For example, the configuration on the left represents a disk with one boundary puncture, with boundary mapping to $L_K$, one boundary point mapping to $K$ and boundary puncture asymptotic to $y(\lambda)$. 
In the configuration on the right, the punctured disk at the top could have an arbitrary number of negative punctures capped by disks with boundary on $L_K$.}
  \label{bdry0_fig}
\end{figure}

Since $y(\lambda)$ is a linear combination of Reeb chords of degree 1, by taking $c$ in Figure \ref{bdry0_fig} to be each of these Reeb chords we get an equation with three terms (one for each boundary component in the figure). Since $y$ is a cycle, the first term vanishes (because it involves the linearized differential). The second term is $\Phi^\lambda(y(\lambda))$ (recall the definition of $\Phi^\lambda$ above, in terms of the moduli spaces $\NNc$ illustrated in Figure \ref{bdryN1_fig}).  To determine the third term, recall from Section \ref{choices and coeffs} that the $\mu$-powers in the differential of $\mathcal A_K$ are given by counting intersections of the boundaries of holomorphic disks with a longitude curve $x$ in $\Lambda_K$. Keeping in mind Remark \ref{rmk:derivatives} and the fact that $\mu = e^p$, we see that third term is 
$$(\partial_p G )|_{\mu = Q = 1} \,\xi_{0} = (\mu \, \partial_\mu G )|_{\mu = Q = 1} \, \xi_{0}= (\partial_\mu G )|_{\mu = Q = 1} \, \xi_{0}.$$ 
The lemma now follows.
\end{proof}

%{\color{blue} in previous lemma, comment about signs.}

%\

%We next show that the function $\partial_{\lambda}G$ is non-constant (and therefore has isolated zeros) along the `$\mu$-axis' given by $\{\lambda=1=Q\}$.  

%{\color{blue} It seems that we only need part (1) in the next corollary, and propose to erase the other parts (in the Proof of Proposition 6.6(2) below, we need the analogues of parts (2) and (3) for $F$, and I include those arguments there). I think that in Proposition 6.13 we can actually conclude that $(\partial_Q G)(\lambda,1,1) = -\lambda$. This immediately implies Corollary 6.14 }

\begin{corollary} \label{d2G}
We have	
%\begin{enumerate}
%\item 
$\partial_{\mu} G(\lambda,1,1)=1-\lambda$. %\label{cor:a}
%\item $\partial_\mu G(1,1,1)=0$ and $\partial_{\lambda}\partial_{\mu} G(1,1,1) \neq 0$; \label{cor:b}
%\item the analytic function of $\mu$ given by $(\partial_\lambda G)(1,\mu,1)$ is non-constant with isolated zeros.  %\label{cor:c}
%\end{enumerate}
\end{corollary}
\begin{proof}
This follows by combining Lemmas \ref{lem:Phi(y)} and \ref{Phi(y)=d mu G}. 
%Taking the $\lambda$-derivative of \eqref{cor:a} and evaluating at $\lambda = 1$ gives \eqref{cor:b}, which in turn implies \eqref{cor:c}. 	
\end{proof}

%{\color{blue} below we switch from $L_K$ to $\Ld$ without addressing that point. should be more careful with that.}

Next, we get more information on the function $G$ by using the results in Section \ref{sec:main proof} about counts of holomorphic annuli in $T^{\ast}\R^{3}$ between $\R^{3}$ and $L_{K}^{\delta}$. 
Recall the power series $\An_{L_{K}}(\lambda)$ from Section \ref{models}, the identifications of variables for $L=L_{K}^{\delta}$: $\alpha=x$ and $\beta=p$, as well as $\lambda=e^{x}$, $\mu=e^{p}$,  and $Q=e^{t}$.

\begin{figure}%{r}{0.35\textwidth}
	\begin{center}
		\def\svgwidth{.6\textwidth}
		\hspace{1cm}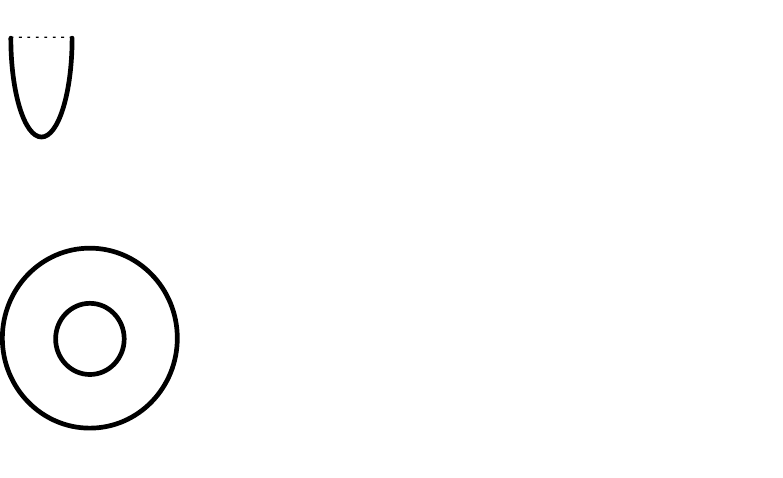
	\end{center}
	\caption{The moduli spaces in \eqref{eq:AlexGfibered5}.}
	\label{cobordismL_fig}
\end{figure}

\begin{proposition} \label{Alex G fibered}
	The following equation holds for any knot $K$:
	\begin{equation} \label{dAnL/dx}
	 \left(\partial_{p}G(e^{x},1,1)\right)\cdot\left(\partial_{x}\An_{L_{K}}(e^{x})\right)
	+ \left(\partial_{t}G(e^{x},1,1)\right) = 0.
	\end{equation}
Equivalently, we can write 
	\begin{equation*} 
	 \left(\partial_{\mu}G(\lambda,1,1)\right)\cdot\left(\lambda \, \partial_{\lambda}\An_{L_{K}}(\lambda)\right)
	+ \left(\partial_{Q}G(\lambda,1,1)\right) = 0.
	\end{equation*}
\end{proposition}
\begin{proof}
	We will use Lemma \ref{l:boundaryannuliy}, with $L=\Ld$. Since $L_{K}^{\delta}\cap\R^{3}=\varnothing$, the last two moduli spaces in Equation \eqref{eq:intpointsy} are empty. Combining Lemmas \ref{l:boundaryannuliy}, \ref{l:boundingchainann} and \ref{l:4chaincapL}, it follows that 
	\begin{equation}\label{eq:AlexGfibered5}
	\mathcal{M}^\infty(c,\sigma) \cup \mathcal{M}^{\infty}(c,\partial^{\infty}V) \cup \mathcal{M}^{\infty}(c,\tau),
	\end{equation}
see Figure \ref{cobordismL_fig}, 
together with the moduli spaces of the curves that contribute to the linearized differential of the degree 1 chord $c$ (for example, $\mathcal{M}_{\mathrm{an}}(c,\boldsymbol{a})$ in Lemma \ref{l:boundaryannuliy} and in Figure \ref{bdryL1_fig}) forms the oriented boundary of a 1-dimensional manifold.

Recall that $y(\lambda)$ is a linear combination of degree 1 chords, and that it is a linearized homology cycle for every $\lambda$. We will consider \eqref{eq:AlexGfibered5} for all the chords $c$ that are involved in $y(\lambda)$, and express algebraically the counts of elements in each moduli space. First, observe that since $y(\lambda)$ is a cycle, contributions from moduli spaces of curve of its linearized differential cancel out and we get an algebraic identity by considering only the spaces in \eqref{eq:AlexGfibered5}.

Let $c$ be a degree 1 chord appearing in $y(\lambda)$. Elements in the moduli space $\mathcal{M}^{\infty}(c,\sigma)$ consist of pairs $(u_1, u_2)$, where $u_1$ is an annulus in $T^*\R^3$ with one boundary component on $\R^3$ and another on $\Ld$, and $u_2$ is a disk in $\R\times ST^*\R^3$ with one positive puncture asymptotic to $c$ and a boundary marked point intersecting the bounding chain $\sigma_{u_1}$. Using the calculation of the $\lambda$- and $\mu$-powers in $\mathcal A_K$ explained in Section \ref{choices and coeffs} and Remark \ref{rmk:derivatives}, the count of elements in $\mathcal{M}^{\infty}(c,\sigma)$ is given by 
	\begin{equation} \label{q: partial derivative}
	\left(\left(\partial_{p} (\partial c)\right)\Big|_{\begin{subarray}{l} \boldsymbol{a} \mapsto \epsilon_{L_K,\lambda}(\boldsymbol{a}) \\ \mu=Q=1\end{subarray}}\right) \cdot\left(\partial_{x}\An_{L_{K}}(e^{x})\right).
	\end{equation}
 (Here in $\partial_p(\partial c)$, $\partial_p$ means taking a partial derivative with respect to the variable $p$, whereas $\partial c$ means taking the differential in the dg-algbera $\mathcal A_K$.) It is useful to observe that the coefficient of an annulus $u$ in $\partial_{x}\An_{L_{K}}(e^{x})$ is (up to sign) given by $d(u)/m(u)$ (degree over multiplicity). This is an integer number,  specifically the intersection number of the boundary of $u$ with a surface Poincar\'e-dual to the longitude curve $x$. Taking the linear combination of the expressions above, given by the coefficients of $y(\lambda)$ as a linear combination of chords $c$, we get 
	\begin{equation}\label{eq:AlexGfibered3}
	\left(\partial_{p}G(e^{x},1,1)\right)\cdot\left(\partial_{x}\An_{L_{K}}(e^{x})\right).
	\end{equation}

The union $\mathcal{M}^{\infty}(c,\partial^{\infty}V)\cup \mathcal{M}^{\infty}(c,\tau)$ contains disks in $\R\times ST^*\R^3$ with an interior puncture in $\partial^{\infty}V$ or with a boundary puncture in $\tau$. By Lemma \ref{compute Q}, one can choose $\tau$ such that the count of such disks can be written as
	\begin{equation*}
	\left(\partial_{t} (\partial c)\right)\Big|_{\begin{subarray}{l} \boldsymbol{a} \mapsto \epsilon_{L_K,\lambda}(\boldsymbol{a}) \\ \mu=Q=1\end{subarray}},
	\end{equation*}
see Remark \ref{rmk:derivatives}. Taking a linear combination as before, using the coefficients of $y(\lambda)$, we get
	\begin{equation}\label{eq:AlexGfibered6} 
\partial_{t}G(e^{x},1,1).
	\end{equation}
Combining \eqref{eq:AlexGfibered3} and \eqref{eq:AlexGfibered6} with the fact that $y(\lambda)$ is a cycle yields the result.
\end{proof}

%Writing $|\mathcal{M}|$ for the algebraic count of points in the weighted $0$-dimensional space $\mathcal{M}$, we get the equation:

%{\color{blue} say something about orientations in previous proof}

\begin{remark}
Recall the non-uniqueness of $\tau$, see Remark \ref{r:nonuniquenessoftau}. Picking a $\tau$ differing from the one above by $k\cdot p$ would change the equation \eqref{dAnL/dx} to 
\[ 
\left(\partial_{p}G(e^{x},1,1)\right)\cdot\left(\partial_{x}\An_{L_{K}}(e^{x})\right)
+ \left(\partial_{t}G(e^{x},1,1)\right) + k\left(\partial_{p}G(e^{x},1,1)\right) = 0,
\]
corresponding to a change of variables $t\mapsto t+kp$.
\end{remark}

\begin{proposition} \label{partial mu G partial Q G}
$\lambda \, (\partial_\mu G)(\lambda,1,1) = (\lambda-1) (\partial_Q G)(\lambda,1,1)$.
\end{proposition}
%\boxx{need additional term $k \partial_\mu G$, from puncture in surface in $L_K$, as explained in section with proof. can we make choices so $k=0$? see rmk 5.11}
\begin{proof}
According to Corollary \ref{AnL}, 
$$
\An_{L_{K}}(e^x) = \sum_{k>0} \frac{e^{kx}}{k} = - \log(1-e^x),
$$
so
$$
\left(\partial_{x} \An_{L_{K}}(e^x)\right) = \frac{e^x}{1-e^x} = \frac{\lambda}{1-\lambda}. 
$$
This combined with Proposition \ref{Alex G fibered} gives the result. 
\end{proof}

\begin{corollary} \label{d Q G}
$(\partial_Q G)(1,1,1) \neq 0$.
\end{corollary}
\begin{proof}
Differentiate the equality in Proposition \ref{partial mu G partial Q G} with respect to $\lambda$: 
$$
\frac{\partial G}{\partial \mu}(\lambda,1,1) + \lambda \frac{\partial^2 G}{\partial \lambda \partial \mu}(\lambda,1,1) = \frac{\partial G}{\partial Q}(\lambda,1,1) + (\lambda-1)\frac{\partial^2 G}{\partial \lambda \partial Q}(\lambda,1,1). 
$$
Evaluate at $\lambda = 1$ and use Corollary \ref{d2G}. 
\end{proof}

\subsection{Full linearized homology}

Recall the definition of the full linearized knot contact homology in Section \ref{sec:aug var poly}. In the proof of the next result, we will use Corollary \ref{d Q G} from Section \ref{sssec:1-parameter generators}.

\begin{lemma} \label{hat KCH MK}
There is an open neighborhood $U \subset \tilde V_K$ of $\boldsymbol{\epsilon_0} \in (\C^*)^3\times \C^{\boldsymbol{a}}$ (with respect to the standard metric topology) such that, for every $\epsilon\in U$, 
$$
\widehat{KCH}_k^\epsilon(K;\C)\cong
\begin{cases}
 \C & \text{if $k=2$} \\
 \C^2 & \text{if $k=0$} \\
 0 & \text{otherwise}
\end{cases}.
$$
\end{lemma}
\begin{proof}
Denote by $\epsilon_0$ the augmentation that gives the point $\boldsymbol{\epsilon_0} \in \tilde V_K$. 
In Lemma \ref{lem:KCHlambda}, it was shown that 
\begin{equation} \label{eq:KCH^epsilon0}
KCH_*^{\epsilon_0}(K;\C) \cong \begin{cases}
\C & \text{if $k = 1$ or 2} \\
 0 & \text{otherwise}
\end{cases}
\end{equation}
% \cite[Section 4]{EkholmNgHigherGenus}, see \eqref{eq:basichomology}. 
%Consider as above braiding $K$ around the unknot $U$. Then $K$ is represented as the closure of an $n$-strand braid. As explained in \cite[Theorem 1.1]{KCH}, the linearized chain complex $KCC^\epsilon_*(K;\C)$ has $n(n-1)$ generators in degree 0 (we denote them $a_i$), $n(2n-1)$ generators in degree 1 (we denote them $b_j$) and $n^2$ generators in degree 2.

For each integer $k\geq 0$, denote by $m_k$ the dimension of $KCC^\epsilon_k(K;\C)$. This dimension is the number of Reeb chords of degree $k$, and is independent of the augmentation $\epsilon$. 
The vector space $\widehat{KCC}^{\epsilon}_*$ can be identified with $KCC^{\epsilon}_*$, with additional generators $\lambda, \mu , Q$ in degree 0. 
Note that for each $k\geq 2$ we have 
$\widehat{\partial}_k^{\epsilon_0} = \partial_k^{\epsilon_0}$, so 
$$
\widehat{KCH}_k^{\epsilon_0}(K;\C) \cong KCH_k^{\epsilon_0}(K;\C) \cong \begin{cases}
\C & \text{ if } k=2 \\
0 & \text{ if } k>2 
\end{cases} .
$$

Next, we will show that
\begin{equation} \label{claim 1}
\dim(\im (\widehat{\partial}_1^{\epsilon_0})) = \dim(\im (\partial_1^{\epsilon_0})) + 1.
\end{equation}
Before proving this note that, since $KCH_0^{\epsilon_0}(K;\C)=0$, equation \eqref{claim 1} implies that $\dim(\im (\widehat{\partial}_1^{\epsilon_0})) =\dim \widehat{KCC_0} - 2$, hence $\widehat{KCH}_0^{\epsilon_0}(K;\C)\cong \C^2$. The fact that $\widehat{KCH}_1^{\epsilon_0}(K;\C) \cong 0$ then follows from the Euler characteristic computation
$$
\chi\left(\widehat{KCH}_*^{\epsilon_0}(K;\C)\right)= \chi\left(KCH_*^{\epsilon_0}(K;\C)\right) + 3 = 3.
$$

Consider \eqref{claim 1}. Observe that $\codim \big(\im(\partial_2^{\epsilon_0}) \subset \ker(\partial_1^{\epsilon_0})\big)=1$ and that, using $y(\lambda)$ as defined before Lemma \ref{lem:Phi(y)}, $y(1)$ is in the complement $\ker(\partial_1^{\epsilon_0}) \setminus \im(\partial_2^{\epsilon_0})$. Denote $y(1)$ by $\beta_0$.  Pick a basis for a complementary subspace to  
$\ker(\partial_1^{\epsilon_0})$ in $\C^{m_1}$, and denote its elements as $\beta_j$, $1\leq j \leq m_0$ (note that $m_1 = \dim(\ker(\partial_1^{\epsilon_0})) + m_0$, since ${KCH}_0^{\epsilon_0}(K;\C)=0$). 
Define new variables 
$l=\lambda - {\epsilon_0}(\lambda) = \lambda-1$, $m= \mu-{\epsilon_0}(\mu) = \mu-1$ and $q=Q-{\epsilon_0}(Q) = Q-1$. We can then write the dg-algbera differential of the $\beta_j$ as  
$$
\partial \beta_j = \xi_j^0(l,m,q) + \sum_{i= 1}^{m_0} \xi_{ji}^1(l,m,q) a_i + \mathcal{O}(\boldsymbol{a}^2)
$$
where the $\xi_{j}^0$ and $\xi_{ji}^1$ are power series satisfying $\xi_j^0(0,0,0) = 0$ for all $0\leq j \leq m_0$ and  $\xi_{0i}^1(0,0,0)=0$ for all $1\leq i \leq m_0$. Our assumptions on the $\beta_j$ also imply that the square matrix 
$$
\Big( \xi_{ji}^1 (0,0,0) \Big)_{1\leq j, i \leq m_0}
$$
is invertible. Now, 
\begin{equation} \label{eq:d hat beta 0}
\widehat{\partial}_1^{\epsilon_0}\left( \beta_0\right) = \left(\frac{\partial\xi_0^0}{\partial l}(0,0,0)\right) l + \left(\frac{\partial\xi_0^0}{\partial m}(0,0,0)\right) m + \left(\frac{\partial\xi_0^0}{\partial q}(0,0,0)\right) q. 
\end{equation}
Corollary \ref{d Q G}, implies that the $q$-derivative in the expression 
above is non-zero.
The fact that 
\begin{equation*} 
\widehat{\partial}_1^{\epsilon_0}\left( \beta_j\right) = \left(\frac{\partial\xi_j^0}{\partial l}(0,0,0)\right) l + \left(\frac{\partial\xi_j^0}{\partial m}(0,0,0)\right) m + \left(\frac{\partial\xi_j^0}{\partial q}(0,0,0)\right) q + \partial_1^{\epsilon_0}(\beta_j) 
\end{equation*}
for all $1\leq j\leq m_0$ and the non-vanishing of $\widehat{\partial}_1^{\epsilon_0}\left( \beta_0\right)$ imply \eqref{claim 1}. %
\footnote{An anonymous referee pointed out the alternative way of presenting the calculation of $\widehat{KCH}_k^{\epsilon_0}(K;\C)$. The short exact sequence of chain complexes $0\to \C^3 \to \widehat{KCH}_*^{\epsilon_0}(K;\C) \to {KCH}_*^{\epsilon_0}(K;\C) \to 0$ combined with \eqref{eq:KCH^epsilon0} yields the long exact sequence 
$$
0 \to \widehat{KCH}_1^{\epsilon_0}(K;\C) \to \C \to \C^3 \to \widehat{KCH}_0^{\epsilon_0}(K;\C) \to 0.
$$
The result follows from the non-triviality of the map $\C\to \C^3$, which is implied by the non-vanishing of \eqref{eq:d hat beta 0}.}

To finish the proof of the lemma, we show that if the lemma holds for some $\epsilon_1\in \tilde V_K$, then it also holds for all other augmentations in a neighborhood of $\epsilon_1$. Given a Reeb chord $c$, we can think of 
$\widehat{\partial}^\epsilon(c)$ as the result of applying $\epsilon$ to all but one occurences of $\lambda$, $\mu$, $Q$, 
or Reeb chords in each term in $\partial c$. This is clearly continuous in $\epsilon$. By assumption, the rank of $\widehat{\partial}_2^{\epsilon_1}$ is $m_1-m_0-1$. The continuity of $\widehat{\partial}^\epsilon$ guarantees that 
the rank of $\widehat{\partial}_2^{\epsilon}$ is also at least $m_1-m_0-1$ near $\epsilon_1$, which immediately implies the upper semicontinuity of $\dim \widehat{KCH}_2^\epsilon(K;\C)$ at $\epsilon_1$. 
Similarly, the rank of $\widehat{\partial}_1^{\epsilon_1}$ is $m_0+1$ at $\epsilon_1$, hence it is at least as much for nearby $\epsilon$. This implies the upper semicontinuity of $\dim \widehat{KCH}_0^\epsilon(K;\C)$. The lemma follows.
\end{proof}

\begin{remark}
The last part of the proof of Lemma \ref{hat KCH MK} concerns the upper semicontinuity of $\dim \widehat{KCH}_0^\epsilon(K;\C)$ and of $\dim \widehat{KCH}_2^\epsilon(K;\C)$ at an $\epsilon_1$ where the lemma holds. If neither dimension can increase near $\epsilon_1$, then using Euler characteristic $=3$,
$$
\dim\widehat{KCH}_1^\epsilon(K;\C) = \dim\widehat{KCH}_0^\epsilon(K;\C) + \dim\widehat{KCH}_2^\epsilon(K;\C) - 3 \leq 2 + 1 - 3 = 0
$$ 
and $\dim\widehat{KCH}_1^\epsilon(K;\C) =0$. Similarly, the dimensions of $\widehat{KCH}_0^\epsilon(K;\C)$ and $\widehat{KCH}_2^\epsilon(K;\C)$ must also be constant near $\epsilon_1$. 

A more general approach to this would be to show that $\widehat{KCH}_*^\epsilon(K;\C)$ is a coherent sheaf on $\tilde V_K$, then the upper semicontinuity of the dimensions of the cohomology stalks 
%\cite[Chapter III, Theorem 12.8]{Hartshorne} 
would give us the upper semicontinuity of $\widehat{KCH}_k^\epsilon(K;\C)$ for all $k$.   
\end{remark}

Lemma \ref{hat KCH MK} has the following consequence. We use notation as in its proof: $\boldsymbol{a}=(a_{i})$ and $\boldsymbol{b}=(b_{j})$ are the degree zero and degree one Reeb chords of $\Lambda_{K}$, respectively, $\beta_{0}$ is a non-zero element in $\ker(\partial_{1}^{\epsilon_{0}})$ in the complement of $\im(\partial_{2}^{\epsilon_{0}})$, and $\beta_{j}\in\C^{\boldsymbol{b}}$, $j=1,\dots,m_0$, is a basis of a subspace of $\C^{\boldsymbol{b}}$ complementary to $\ker(\partial_{1}^{\epsilon_{0}})$.  Let $\tilde W_{K}\subset (\C^{\ast})^{3}\times\C^{\boldsymbol{a}}\cong (\C^{\ast})^{3}\times\C^{m_0}$ be the zero locus of the polynomials $\partial\beta_{j}$, $j=0,\dots,m_0$. By definition, $\tilde V_K \subset \tilde W_K$.

\begin{corollary} \label{partial beta i = 0 smooth}
 The solution set $\tilde W_K$ is a complex manifold of dimension 2 in an open neighborhood of $\boldsymbol{\epsilon_0}\in (\C^*)^3\times \C^{\boldsymbol{a}}$ (with respect to the standard metric topology).
\end{corollary}
\begin{proof}
 The polynomials $\partial \beta_j$ define a map $(\C^{\ast})^{3}\times\C^{\boldsymbol{a}} \to \C^{m_0+1}$. The differential of this map at $\boldsymbol{\epsilon_0}$ can be identified with $\widehat{\partial}_1^{\epsilon_0}$, and was shown in the proof of Lemma \ref{hat KCH MK} to have full rank. Indeed, the matrix of partial derivatives in the variables $q=Q-1$ and $\boldsymbol{a}$ is invertible. The implicit function theorem then implies that $\tilde W_K$ is smooth near $\boldsymbol{\epsilon_0}$, and locally parametrized by $l=\lambda-1$ and $m=\mu-1$.  
\end{proof}

%{\color{blue} Maybe refer to Lemma \ref{KCH0 tangent}. We don't explicitly mention that lemma anywhere in the paper}

\subsubsection{Proof of Proposition \ref{KCH1 generator} (1)}
The statement for the augmentation $\epsilon_0$ giving $\boldsymbol{\epsilon_0}\in \tilde V_K$ follows from Lemma \ref{lem:KCHlambda}. The upper semicontinuity of the ranks of cohomology groups implies that, for $\epsilon$ near $\epsilon_0$, $KCH_*^\epsilon(K;\C)$ either is as stated or vanishes. The argument at the end of the proof of Lemma \ref{hat KCH MK} can be adapted to show this semicontinuity. 
Assume now that $KCH_*^\epsilon(K;\C)$ vanishes for $\epsilon$ arbitrarily close to $\epsilon_0$. Then, since $\widehat \partial_2^\epsilon = \partial_2^\epsilon$, $\widehat{KCH}_2^\epsilon(K;\C) = 0$, which contradicts Lemma \ref{hat KCH MK}. The result follows. 
\qed

\subsubsection{Proof of Proposition \ref{full V_K 2d}}
 It is sufficient to prove both statements for a neighborhood of $\boldsymbol{\epsilon_0} \in \tilde V_K$ in the standard metric topology, since the singular locus of an affine variety is Zariski-closed. 
 Corollary \ref{partial beta i = 0 smooth} implies that the vanishing locus of the polynomials $\partial \beta_j$ is a smooth complex variety $\tilde W_K$ of dimension 2 near $\boldsymbol{\epsilon_0}$. We show that $\tilde V_K$ coincides with $\tilde W_K$ near $\boldsymbol{\epsilon_0}$. 
 
 Pick a collection  $\{c_i\}_{1\leq i \leq m_1-m_0-1}$ of Reeb chords of degree 2, such that the $\gamma_i \coloneq \partial_2^{\epsilon_0}(c_i)$ are linearly independent. These exist, by Proposition \ref{KCH1 generator} (1). Note that the collection $\{\beta_j,\gamma_i\}$, with $0\leq j \leq m_0$ and $1\leq i \leq m_1-m_0-1$, is a basis of $\C^{\boldsymbol{b}}=KCC_1^\epsilon(K;\C)$ for any $\epsilon$. Hence, $\tilde V_K$ is the vanishing locus of all the $\partial \beta_j$ and all the $\partial \gamma_i$. 
 
For all $1\leq k \leq m_1-m_0-1$, we have 
\[ 
\partial c_k = \alpha_k(\lambda,\mu,Q) \, \gamma_k + \sum_i \mathcal O(\boldsymbol{\tilde a}) \, \gamma_i + \sum_j \mathcal O(\boldsymbol{\tilde a}) \, \beta_j,
\]
where the coefficients $\mathcal O(\boldsymbol{\tilde a})$ depend on $(\lambda,\mu,Q,\boldsymbol{a})$ and have order at least one in the variables $\tilde a_i \coloneq a_i-{\epsilon_0}(a_i)$, and $\alpha_k(1,1,1) = 1$. Since $\partial^2=0$ and there are no chords of negative degree, 
\begin{equation}\label{partialpartial ej}
\alpha_k \, \partial \gamma_k = \sum_i \mathcal O(\boldsymbol{\tilde a}) \, \partial \gamma_i + \sum_j \mathcal O(\boldsymbol{\tilde a}) \, \partial \beta_j.
\end{equation}

Assume that, for some $k$, $\partial \gamma_k$ does not vanish near $\boldsymbol{\epsilon_0}$ in $\tilde W_K$. This will lead us to a contradiction. We know that $\tilde W_K$ is 2-dimensional and parametrized by $\lambda-1$ and $\mu-1$ near $\boldsymbol{\epsilon_0}$. The restriction of $\partial \gamma_k$ to $\tilde W_K$ can be written as a power series in $\lambda-1$ and $\mu-1$ near $\boldsymbol{\epsilon_0}$. By assumption, this series is non-zero, and has a minimal order in $\lambda-1$ and $\mu-1$. Pick the $k$ for which this minimal order is the smallest, and denote this order by $s$. The variables $\tilde a_i$ can also be expressed as power series in $\lambda-1$ and $\mu-1$ (near $\boldsymbol{\epsilon_0}$ in $\tilde W_K$), and we know that the constant terms in these series vanish. Now, restricting equation \eqref{partialpartial ej} to $\tilde W_K$, and using the fact that $\alpha_k(1,1,1) \neq 0$ and that the $\partial \beta_j$ vanish along $\tilde W_K$, we conclude that the right side of the equation has order greater than $s$, which is a contradiction. 

We conclude that the $\partial \gamma_k$ vanish in a neighborhood of $\boldsymbol{\epsilon_0}$ in $\tilde W_K$, and hence $\tilde W_K \subset \tilde V_K$ in that neighborhood. We already know that $\tilde V_K \subset \tilde W_K$. Hence the two coincide and the result follows. 
\qed

\begin{remark}
Proposition \ref{full V_K 2d} implies that $\tilde V_K$ is smooth and 2-dimensional near $\boldsymbol{\epsilon_0}$. According to Lemma \ref{KCH0 tangent}, the cotangent space to $\tilde V_K$ at a point $\boldsymbol{\epsilon}$ coming from an augmentation $\epsilon$ is isomorphic to $\widehat{KCH}_0^\epsilon(K,\C)$. Thus this vector space is 2-dimensional for $\boldsymbol{\epsilon}$ in a neighborhood of $\boldsymbol{\epsilon_0}$, according to Lemma \ref{hat KCH MK}. 
\end{remark}

%{\color{blue} Maybe refer to Lemma \ref{KCH0 tangent}. We don't explicitly mention that lemma anywhere in the paper}

\subsection{A 2-parameter family of generators}

By Proposition \ref{full V_K 2d}, we can parametrize $\tilde V_K$ by coordinates $\lambda, \mu$ near $\boldsymbol{\epsilon_0}$. Let $\epsilon_{\lambda,\mu}$ denote the augmentation at $(\lambda,\mu)$. Consider an analytic family $y(\lambda,\mu)$ of cycles in $KCC_{1}^{\epsilon_{\lambda,\mu}}(K;\C)$ generating $KCH_1^{\epsilon_{\lambda,\mu}}(K;\C)$ (by Proposition \ref{KCH1 generator}(1) this homology group is 1-dimensional, possibly after restricting to a smaller open set). To see why such an analytic family exists we argue as for $y(\lambda)$ in Definition \ref{function G}: we have analytic maps 
\begin{align*}
U &\to \mathrm{Fl}_n(k,k+1) \\
(\lambda,\mu) &\mapsto \left(\im \partial_2^{\epsilon_{\lambda,\mu}},\ker \partial_1^{\epsilon_{\lambda,\mu}}\right)
\end{align*}
where $U$ is an open subset of $\C^2$ and $\mathrm{Fl}_n(k,k+1)$ is the partial flag variety of $k$ and $k+1$ dimensional vector subspaces of $\C^n$. We can thus think of $\ker \partial_1^{\epsilon_{\lambda,\mu}}$ as defining a holomorphic vector bundle $E_1$ over $U$, with a holomorphic subbundle $E_2$ given by $\im \partial_2^{\epsilon_{\lambda,\mu}}$. The analytic function $y(\lambda,\mu)$ can be defined locally by taking a section of $E_1$ that is transverse to $E_2$. After multiplication of $y(\lambda,\mu)$ by an analytic function, we can assume that $y(\lambda,1)=y(\lambda)$, where $y(\lambda)$ is as in Definition \ref{function G}. In a manner similar to that definition, we have the following. 

\begin{definition} \label{function F}
Let 
$$
  F(\lambda, \mu, Q) \coloneq \partial (y(\lambda,\mu))|_{\boldsymbol{a} \mapsto \epsilon_{\lambda,\mu}(\boldsymbol{a})},
$$ 
where $\partial$ is the DGA-differential on $\mathcal A_K$. 
\end{definition}

This is the function that appears in Proposition \ref{KCH1 generator} (2).

\subsubsection{Proof of Proposition \ref{KCH1 generator} (2)} Our goal is to show that $(\partial_Q F)(1,1,1) \neq 0$ (which is \eqref{partial Q F neq 0}) and that $(\partial_\lambda F)(1,\mu,1)$ is a non-constant function of $\mu$ (which is \eqref{partial lambda F neq 0}). 

We will do this by showing that
\begin{equation} \label{d Q (F-G)}
(\partial_Q F)(\lambda,1,Q) = (\partial_Q G)(\lambda,1,Q)
\end{equation}
and 
\begin{equation} \label{d mu (F-G)}
(\partial_\mu F)(\lambda,1,1) = (\partial_\mu G)(\lambda,1,1). 
\end{equation}
Before proving these two claims we explain how they imply the result. 
%imply \eqref{partial Q F neq 0} and \eqref{partial lambda F neq 0} in Proposition \ref{KCH1 generator} (2). 
Recall that Corollary \ref{d Q G} says that $(\partial_Q G)(1,1,1) \neq 0$, and \eqref{d Q (F-G)} then gives $(\partial_Q F)(1,1,1) \neq 0$. 

Corollary \ref{d2G} says that $(\partial_\mu G)(\lambda,1,1) = 1-\lambda$, and \eqref{d mu (F-G)} then gives $(\partial_\mu F)(\lambda,1,1) = 1-\lambda$ which implies that $(\partial_\lambda \partial_\mu F)(1,1,1) \neq 0$. Therefore, the analytic function of $\mu$ given by $(\partial_\lambda F)(1,\mu,1)$ is non-constant.

We prove \eqref{d Q (F-G)} and \eqref{d mu (F-G)}. Consider the following Taylor expansions in $\mu-1$:
\begin{align*}
y(\lambda,\mu) &= y(\lambda) + (\mu-1) z(\lambda) + \mathcal O((\mu-1)^2) \\
\epsilon_{\lambda,\mu} &= \epsilon_\lambda + (\mu-1) \eta_\lambda + \mathcal O((\mu-1)^2)
\end{align*}
for suitable holomorphic functions $z$ and $\eta$. We can then write, 
\begin{align*}
F(\lambda,\mu,Q) &= G(\lambda, \mu, Q) + (\mu-1) \left( (\partial(y(\lambda)))|_{\boldsymbol{a} \mapsto (\epsilon_\lambda,\eta_\lambda)(\boldsymbol{a})} + (\partial(z(\lambda)))|_{\boldsymbol{a} \mapsto \epsilon_\lambda(\boldsymbol{a})} \right)\\ 
&+ \mathcal O((\mu-1)^2) 
\end{align*}
where $x_{\boldsymbol{a} \mapsto (\epsilon_\lambda,\eta_\lambda)(\boldsymbol{a})}$ is the sum of all possible ways of applying $\epsilon_\lambda$ to all but one of the $a_i$ in $x$, and applying $\eta_\lambda$ to that chosen $a_i$ in $x$. Equation \eqref{d Q (F-G)} follows immediately. On the other hand,  
\begin{align*}
\frac{\partial F}{\partial \mu}(\lambda,1,1) &= \frac{\partial G}{\partial \mu}(\lambda,1,1) + \left( (\partial(y(\lambda)))|_{\boldsymbol{a} \mapsto (\epsilon_\lambda,\eta_\lambda)(\boldsymbol{a})} + (\partial(z(\lambda)))|_{\boldsymbol{a} \mapsto \epsilon_\lambda(\underline a)} \right)|_{\mu=Q=1} = \\ 
&= \frac{\partial G}{\partial \mu}(\lambda,1,1) + \eta_\lambda \big( \partial_1^{\epsilon_\lambda} (y(\lambda))\big) + \epsilon_\lambda(\partial(z(\lambda))).
\end{align*}
The claim now follows from the fact that the two last summands vanish. The first is zero because $y(\lambda)$ is a cycle in the chain complex linearized by $\epsilon_\lambda$. The second vanishes since $\epsilon_\lambda \circ \partial= 0$. 
\qed

%%%%%%%%%%%%%%%%%%%

%%%%%%%%%%%%%%%%%%%
 
\subsubsection{Partial derivatives of $F$}

Consider the curves in Figure \ref{E_matrices_fig}. 
%contains some curves that that were illustrated in Figure \ref{bdry6_fig}. 
Recall that the $\xi_i$ and $\eta_i$ are the intersection points in $\Md\cap \R^3$ of indices 1 and 2, respectively. Let $D$ be a matrix whose entry $D_{ij}$ is given by the count of elements in $\MMag$, that we denote as $|\MMag|$. Recall that we chose arcs connecting $\xi_i$ to $\eta_j$ to a base point and collapsed a disk neighborhood $U$ of these arcs to a point. That way we close up the boundary curves of the disks and we can write the matrix entries $D_{ij}$ with coefficients in $\C[\mu^{\pm}]$. 
 
Write the family of generators $y(\lambda,\mu)$ as a linear combination $\alpha_1(\lambda,\mu) \, c_1 + \ldots +\alpha_m(\lambda,\mu) \, c_m$ of degree 1 Reeb chords $c_i$, with coefficients given by analytic functions $\alpha_i$. Let $E_\xi$ be a matrix whose entry $(E_{\xi})_{ij}$ is given by 
$$
\alpha_1(\lambda,\mu) \, |\mathcal M(c_1,\xi_i,\xi_j)| + \ldots +\alpha_m(\lambda,\mu) \, |\mathcal M(c_m,\xi_i,\xi_j)|.
$$
%where $|\mathcal M(c_k,\xi_i,\xi_j)|$ denotes the count of elements in the moduli space $\mathcal M(c_m,\xi_i,\xi_j)$. 
Similarly, let $E_\eta$ be a matrix with entry $(E_{\eta})_{ij}$ given by 
$$
\alpha_1(\lambda,\mu) \, |\mathcal M(c_1,\eta_i,\eta_j)| + \ldots +\alpha_m(\lambda,\mu) \, |\mathcal M(c_m,\eta_i,\eta_j)|.
$$

%DO WE NEED THE NEXT SENTENCE??

%Pick reference paths $\gamma_{ij}$ so that the last statement in Lemma \ref{l:brokentriangles} holds. 

\begin{figure}%{r}{0.35\textwidth}
  \begin{center}
    \def\svgwidth{.75\textwidth}
    \hspace{1cm}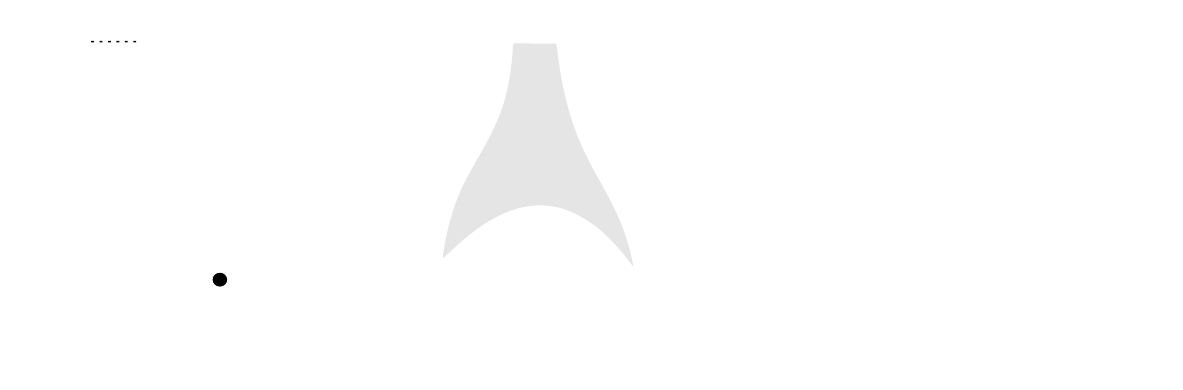
  \end{center}
  \caption{The moduli spaces $\MMae$, $\MMaf$ and $\MMag$}
  \label{E_matrices_fig}
\end{figure}

%{\color{blue} in formula \eqref{dAn/dp+}, the term with $D$ depends on the choice of $\gamma_{ij}$, and no other terms depend on this choice. hence, the formula only holds for certain choices of $\gamma_{ij}$. need to use the fact that chainging the $\gamma_{ij}$ only affects the first and third boundary terms in Figure 12. Maybe comment on that in the Lemma that uses that figure. think about the dependence of $\Delta_K$ on the $\gamma_{ij}$ in the last identity in the proof below}

\begin{theorem} \label{Alex F general}
Taking $\lambda = e^x$, $\mu = e^p$ and $Q = e^t$, we get 
\begin{equation} \label{dAn/dp+}
 \left(\partial_{x} F(1,e^{p},1)\right)\left[\partial_{p}\An_{M_K}(e^p) + \Tr\left(D^{-1}(\partial_{p} D)(e^p)\right)\right] + \left(\partial_t F(1,e^{p},1)\right) = 0,  
\end{equation}
where $\Tr$ denotes the trace of a matrix. It follows that 
\begin{equation} \label{Alex from F}
\Delta_K(e^{p}) = (1-e^{p}) \exp\left(\int -\left(\frac{\partial_t F}{\partial_x F}\right) \Big|_{(1,e^p,1)} \, dp \right).
\end{equation}
\end{theorem}
\begin{proof}
The proof is similar to that of Proposition \ref{Alex G fibered}, but with $L=\Md$ instead of $\Ld$. In this case, the last two terms in \eqref{eq:intpointsy}
% Lemma \ref{l:boundaryannuliy} 
cannot be disregarded, and the corresponding count is given by the traces of the matrices $E_\xi$ and $E_\eta$, respectively. The analogue of equation \eqref{dAnL/dx} in Proposition \ref{Alex G fibered} is now
%and using Lemmas \ref{l:boundaryoftriangles} and \ref{l:brokentrianlges} 
 \begin{equation} \label{d An/dp=dtF/dxF+Tr}
  (\partial_x F(1,e^p,1)) \cdot (\partial_{p}\An_{M_K}(e^p)) + (\partial_t F(1,e^p,1)) + \Tr E_\xi + \Tr E_\eta = 0.
 \end{equation}

To prove \eqref{dAn/dp+}, we will use Lemmas \ref{l:boundaryoftriangles} and \ref{l:brokentriangles} to express $\Tr E_\xi + \Tr E_\eta$ in \eqref{d An/dp=dtF/dxF+Tr} differently. Figures \ref{bdry6_fig} and \ref{bdry7_fig} are helpful to visualize the moduli spaces that are relevant for those lemmas. By combining these lemmas, we find that the union  
	\begin{align} \label{eq:moduli trace E}
	&\MMv \cup \MMw \cup \MMac \cup %\MMab \cup 
 \notag \\
	&\qquad\qquad \cup \MMy \cup \MMad.
	\end{align}
constitutes the boundary of a 1-dimensional manifold. 
%with moduli spaces involving the curves that contribute to the linearized differential of the degree 1 chord $c$ (for example, $\MMad$ in Lemma \ref{l:boundaryoftriangles} and in Figure \ref{bdry6_fig}). 
%

Recall that we wrote $y(\lambda,\mu) = \sum_{i} \alpha_i(\lambda,\mu) c_i$. For each index 1 Reeb chord of $c_i$, multiply $\alpha_i$ by the count of elements in the moduli spaces in \eqref{eq:moduli trace E}, with $c = c_i$. The sum of all such contributions must vanish, since \eqref{eq:moduli trace E} is a boundary for all $c$. Let us analyze the terms in \eqref{eq:moduli trace E} more carefully, in reverse order. The fact that $y(\lambda,\mu)$ is a cycle implies that the contributions coming from the last two terms in \eqref{eq:moduli trace E} must vanish on their own. 

%As for the terms coming from the $\MMab$, we can choose $\gamma_{ij}$ so that those contributions vanish. The reason for this is that we know that $\partial_{x}F(1,\mu,1)\ne 0$ by Proposition \ref{KCH1 generator}(2), so we can add to the reference path $\gamma_{ij}$ a suitable multiple of the curve $p$ near infinity.

%{\color{blue} the paragraph above should be clarified, as should its implications for the generality of the Theorem}

We are left with contributions from the first three terms in \eqref{eq:moduli trace E}, which give 
 \begin{equation*} %\label{eq BE}
  E_\xi \cdot D+ D\cdot E_\eta =
% |\mathcal M^\infty(y,\xi_i,\eta_j,\tilde\sigma)| = 
(\partial_x F(1,e^p,1)) \cdot (\partial_{p} D),
 \end{equation*}
where signs are induced from the boundary orientation of the oriented 1-dimensional moduli space.
%{\color{blue} it would be useful to justify the signs in the previous formula}

The term on the right comes from the $\MMac$, using again the description of the $\lambda$- and $\mu$-powers in $\mathcal A_K$ in Section \ref{choices and coeffs}. The appearance of the partial derivatives is justified in Remark \ref{rmk:derivatives}, as was argued for formula \eqref{q: partial derivative}. 
Multiplying the last equation by $D^{-1}$ and taking the trace, we get
\begin{equation} \label{Tr=}
 \Tr E_\xi + \Tr E_\eta = (\partial_x F(1,e^p,1)) \cdot \Tr \left(D^{-1}(\partial_{p} D) \right).
\end{equation}
Equation \eqref{dAn/dp+} now follows from combining equations \eqref{d An/dp=dtF/dxF+Tr} and \eqref{Tr=}. 

To get \eqref{Alex from F} from \eqref{dAn/dp+}, first note that Proposition \ref{partial lambda F neq 0}(2) implies that the denominator in the integrand is not identically zero. Then, using integral signs to denote antiderivatives as explained in Remark \ref{rmk:antiderivatives}
\begin{align*}
(1-e^{p}) &\exp\left(\int -\left(\frac{\partial_t F}{\partial_x F}\right)\Big|_{(1,e^p,1)} \, dp \right)= \\ 
&= (1-e^{p}) \exp\left(\int \partial_{p}\An_{M_K}(e^p) dp + \int \Tr\left(D^{-1} (\partial_{p}D)\right) dp \right) = \\
&= (1-e^{p}) \exp\left(\An_{M_K}(e^{p}) \right) \cdot \exp \left( \int \partial_{p} (\log \det D(e^p)) \, dp \right) = \\
&= (1-e^{p}) \exp\left(\An_{M_K}(e^{p}) \right) \cdot \det D(e^{p}) = \\
&= \Delta_K(e^{p}).
\end{align*}
In the last identity, we used Theorem \ref{T:Alex zeta tau}.
\end{proof}

%{\color{blue} say something about orientations in previous proof}

%{\color{blue} in Lemma \ref{l:brokentriangles} we used to have "where $\mathcal{M}(y,\boldsymbol{a},\xi_i,\eta_j,\tilde\sigma)$ has algebraically zero points since $y$ is a cycle, and where we can choose $\gamma_{ij}$ so that $\mathcal{M}(y,\gamma_{ij})$ has algebraically zero points." The proof said 
%"	To see that the last statement is true, note that $\partial_{x}F(1,\mu,1)\ne 0$ by Proposition \ref{KCH1 generator}(2) and add to the reference path $\gamma_{ij}$ a suitable multiple of the curve $p$ near infinity." We need to address this point when using the Lemma here}

\begin{remark}
If $K$ is fibered, then $\Md$ can be made to not intersect the zero section in $T^*\R^3$, as explained in \cite{AENV}. In that case, the proof of Theorem \ref{Alex F general} would be simpler, since one would not need to take into account the Floer strips counted by the $D_{ij}$. 
\end{remark}

\begin{remark}
In the proof of Theorem \ref{Alex F general} we used Theorem \ref{T:Alex zeta tau}, which is a holomorphic curve reformulation of Equation \eqref{alex} where flow loops get replaced with annuli and flow lines get replaced with strips. One should be careful that the choices made to specify the $\mu$-powers for flow lines in $\tau_{\rm Morse}$ and for strips in $\tau_{\rm str}$ are compatible. The latter depend on the choices of paths $\gamma_{\xi_i}, \gamma_{\eta_j}$ to the basepoint and the corresponding disk $U$. Changing those paths leads to multiplication of the matrix $D$ on the left and on the right by diagonal matrices with diagonal entries which are powers of $\mu$. The effect of that on Theorem \ref{T:Alex zeta tau} is to multiply $\tau_{\rm str} = \det (D(\mu))$ by some power of $\mu$, which is compatible with the fact that $\Delta_K(\mu)$ is defined up to multiplication by such powers.  
\end{remark}

\subsection{Proof of Theorem \ref{T:Alex Aug}} \label{sec:proof of main thm}
%Equation \eqref{Alex from Aug} in Theorem \ref{T:Alex Aug} is \eqref{Alex from F} in Theorem \ref{Alex F general}.
To prove equation \eqref{Alex from Aug 2} in Theorem \ref{T:Alex Aug}, we need to show that we can replace $F$ with $\Aug_K$ in equation \eqref{Alex from F} in Theorem \ref{Alex F general}, 
if $\partial_\lambda \Aug_K|_{(\lambda,Q) = (1,1)} \neq 0$. Recall that, according to Proposition \ref{full V_K 2d}, $\tilde V_K$ can be parametrized by $(\lambda,\mu)$ near $\boldsymbol{\epsilon_0}$. We denote this parametrization $\epsilon_{\lambda,\mu}$ as before. Write $Q(\lambda,\mu)$ for the $Q$-component of $\epsilon_{\lambda,\mu}$ and consider the graph of $Q(\lambda,\mu)$:
$$
V\coloneq \{(\lambda,\mu,Q(\lambda,\mu))\}\subset (\C^*)^3.
$$
Observe that $V$ is the image under $\pi$ of $\tilde V_K$ near $\boldsymbol{\epsilon_0}$. 
Note that 
$$
F(\lambda,\mu,Q(\lambda,\mu)) = \epsilon_{\lambda,\mu} (\partial (y(\lambda, \mu))) = 0
$$
(because $\epsilon_{\lambda,\mu}$ is an augmentation), so $F|_V \equiv 0$. Also, writing $\boldsymbol{1} = (1,1,1)$, Equation \eqref{partial Q F neq 0} implies that $(\nabla F)|_{\boldsymbol{1}} \neq 0$, so $V$ is locally cut out by $F$ near $\boldsymbol{1}$. 

We next show that $V \subset V_K$. If $\Aug_K$ were the zero polynomial for some knot $K$ (which is conjecturally never the case, as recalled in Remark \ref{NgConjecture}), then $V_K = (\C^*)^3$ and $V\subset V_{K}$. On the other hand, if $\Aug_K$ is not identically zero, then $V_K$ is 2-dimensional. But since $\tilde V_K$ is 2-dimensional and parametrized by $(\lambda,\mu)$ near the smooth point $\boldsymbol{\epsilon_0}$, the image under $\pi$ of the irreducible component of $\tilde V_K$ through $\boldsymbol{\epsilon_0}$ is also 2-dimensional, so it is contained in $V_K$. This implies that $V\subset V_K$. 

The inclusion $V\subset V_{K}$ then implies that along $V$ we have $d\Aug_K = h \cdot (dF)$ for some holomorphic function $h$ defined near $\boldsymbol{1}$. Therefore, if $\partial_\lambda \Aug_K\neq 0$ then $h\neq 0$ and we get
$$
 \frac{\partial_Q \Aug_K}{\partial_\lambda \Aug_K}  =  \frac{h\cdot (\partial_Q F)}{h \cdot (\partial_\lambda F)}  =  \frac{\partial_Q F}{\partial_\lambda F}.
$$
Theorem \ref{T:Alex Aug} then follows from Theorem \ref{Alex F general}.\qed

\begin{remark} \label{Alex from branch}
According to formula \eqref{Alex from F}, one can obtain $\Delta_K$ from the function $F$ for every knot $K$. The proof of Theorem \ref{T:Alex Aug} above shows that $F$ cuts out a smooth surface $V\subset V_K$ near $\boldsymbol 1$. If $V_K$ is 2-dimensional (as is conjectured for every $K$), then $V$ is locally a branch of $V_K$. Thus, even when formula \eqref{Alex from Aug 2} does not hold, we can say that it makes sense along a suitable branch of the augmentation variety. 
\end{remark}

\begin{remark} \label{d mu F = 0}
It is a general fact that $(\partial_\mu F)|_{\lambda = Q = 1} \equiv 0$. This is because
$$
\partial_{\mu} F(1,\mu,1) = \partial_{\mu}\big(\epsilon_{1,\mu} (\partial (y(1, \mu))) \big) = 0, 
$$
since $\epsilon_{1,\mu} \circ \partial = 0$. 
The inclusion $V\subset V_{K}$ then implies that $(\partial_\mu \Aug_K)|_{\lambda = Q = 1} \equiv 0$ for every knot $K$. 
\end{remark}

\subsection{Independence of choices}

\label{sec: independence of choices}

Writing the dg-algebra $\mathcal A_K$ with coefficients in the ring $R=\C[\lambda^{\pm 1},\mu^{\pm1},Q^{\pm 1}]$ required 
making some choices, as discussed in Section \ref{choices and coeffs}. We now study the (in)dependence of equation \eqref{Alex from F}, and hence of \eqref{Alex from Aug 2}, on these choices. 

\begin{itemize}
 \item {\bf Capping half-disks for Reeb chords:}
% Let $\tilde \partial$ be the new $R$-linear differential on $\mathcal A_K$ obtained by changing the capping half-disk of a Reeb chord $c$. Recall that two such 
Two choices of capping half-disks for a Reeb chord $c$ differ by an element of $\pi_2(ST^*\R^3,\Lambda_K) \cong H_2(ST^*\R^3,\Lambda_K;\Z)$. If $c$ is of degree different than 1, then $F$ is not altered by this change in capping disks. If $c$ is of degree 1 and is a term in the cycle $y$ in Definition \ref{function F}, then $y$ can be suitably altered so that $F$ (and hence also formula \eqref{Alex from F}) does not change. 
\item {\bf Meridian in $\Lambda_K$:} A change of meridian curve for $K$ (which we also called a change of framing \eqref{change framing}) would yield a change of variables of the form 
 $(\lambda,\mu,Q) \mapsto (\lambda, \lambda^{k}\mu,Q)$, for some $k\in \Z$. %A change of the form $(\lambda,\mu,Q) \mapsto (\lambda,\mu \lambda^{k},Q)$, 
Such a change would not affect the 
 numerator on the integrand in \eqref{Alex from F}. The denominator would change to 
 $$
 \epsilon_{M_K}(\partial_\lambda F + k  \lambda^{k-1} \mu \partial_\mu F).
 $$
 But $\epsilon_{M_K}(\partial_\mu F)=0$, as we saw in Remark \ref{d mu F = 0}, so the integrand in \eqref{Alex from F} remains unchanged.  
Note also that changing the orientation of $K$ correponds to %a change of variables 
$(\lambda,\mu,Q) \mapsto (\lambda^{-1},\mu^{- 1},Q)$ in $F$ (the direction of $p$ changes, to preserve $x\cdot p = 1$). This changes $\Delta_K(\mu)$ to $\Delta_K(\mu^{-1})$. It is well-known that the Alexander polynomial is invariant under this operation (see for instance \cite{SeifertAlexander,MilnorDuality}), which is compatible with the fact that $\Delta_K(\mu)$ does not depend on the orientation of $K$. 
\item {\bf Capping disks for longitude and meridian:}  
Recall that this is equivalent to a choice of splitting of the short exact sequence \eqref{splitting ses}. A change in that splitting corresponds to a change of variables 
$(\lambda,\mu,Q) \mapsto (\lambda Q^l,\mu Q^m,Q)$, for some $l,m\in \Z$. The integrand in \eqref{Alex from F} changes to 
$$
\epsilon_{M_K}\left(-l- m \, \frac{\partial_{\mu} F}{\partial_\lambda F}-\frac{\partial_Q F}{\partial_\lambda F}\right) = -l-\epsilon_{M_K}\left(\frac{\partial_Q F}{\partial_\lambda F}\right),
$$
where we used again the fact that $\epsilon_{M_K}(\partial_\mu F)=0$. 
This change has the effect of multiplying $\Delta_K(\mu)$ by $\mu^{-l}$, 
and it is compatible with the fact that the Alexander polynomial is defined up to multiplication by a power of $\mu$. 
\end{itemize}

\section{Examples}\label{sec Ex}

\subsection{Computing $\Aug_K$} \label{elimination}
In many examples, the augmentation polynomial $\Aug_K$ can be computed using elimination theory, as follows. Recall that $R=\C[\lambda^{\pm1},\mu^{\pm1},Q^{\pm1}]$, $\tilde R=R[\boldsymbol{a}]$ where $\boldsymbol a = (a_i)$ is the ordered list of degree 0 Reeb chords, and $\tilde I_K$ is the ideal generated by the differentials of degree 1 Reeb chords. Recall that $\tilde V_K = V(\tilde I_K)\subset (\C^*)^3\times \C^{\boldsymbol a}$ is the full augmentation variety and that $V_K\subset (\C^*)^3$ is the augmentation variety. If $I_K \coloneq R\cap \tilde I_K$, then $V_K \subset \mathrm{Cl}(\pi(\tilde V_K)) \subset V(I_K)$, where $\mathrm{Cl}$ denotes Zariski-closure. Hence, $I(V_K) \supset I(V(I_K)) = \sqrt{I_K}$, by Hilbert's Nullstellensatz. Therefore, if $I_K \neq \{0\}$ then $\Aug_K$ cannot be $0$. 

Corollary \ref{Aug exists} implies that $V_K\subset (\C^{\ast})^3$ is either 2- or 3-dimensional. Hence, $V_K\subset V(I_K)$ is contained in the union of maximal-dimensional irreducible components of $V(I_K)$. Denote this union by $X_K \subset (\C^*)^3$. Since the dimension of $X_K$ is 2 or 3, 
each irreducible component of $X_K$ is the zero set of some irreducible polynomial, 
%(see for instance \cite[Chapter I, Proposition 1.13]{Hartshorne} %
%\footnote{\label{ftnt}although the result is stated in \cite{Hartshorne} for subvarieties of affine space $\C^n$, the proof also holds in $(\C^*)^n$})%
and hence $I(X_K)$ is a principal ideal. If $f$ is a generator of this ideal, then $\Aug_K$ must divide $f$. If we further knew that $V_K = V(I_K)$ (or equivalently, that $I(V_K) = \sqrt{I_K}$), then we could conclude that $\Aug_K = f$. 

The polynomial $f$ can in principle be explicitly computed as follows. From a Gr\"obner basis for $\tilde I_K$, one can obtain a Gr\"obner basis for $I_K$. 
%(\cite[Chapter 3 \S1, Theorem 2]{CoxLittleOShea}). 
Then one can find generators for $\sqrt{I_K}$ 
%(\cite[page 184]{CoxLittleOShea}) 
and obtain a minimal decomposition of $\sqrt{I_K}$. 
%(\cite[page 218]{CoxLittleOShea}). 
A generator $f$ of $I(X_K)$ can be obtained from such a decomposition. 

In the case when we know that $I_K$ is a principal ideal, then there is a potentially simpler computational approach to determining $f$ using the following fact: $I_K$ is a principal ideal if and only if its reduced Gr\"obner basis has a single element $g$ (which generates $I_K$). A generator $f$ of $\sqrt{I_K}$ would be given by a \emph{reduction} of $g$, %according to \cite[Chapter 4 \S2, Proposition 9]{CoxLittleOShea} 
(if $g=u g_1^{a_1}\ldots g_r^{a_r}$ is a factorization into irreducible polynomials, then a reduction of $g$ is $g_1 \ldots g_r$). 

%{\color{blue} \

%below: right or left? say more about missing terms in EN20}

\subsection{The trefoil}
We represent the trefoil by the 2-strand braid $\sigma_1^{-3}$ (augmentation polynomials of a knot and its mirror are related by a change of variables $(\lambda,\mu,Q)\mapsto (\lambda Q^{-1},\mu^{-1},Q^{-1})$ for certain capping disks, etc). 
In the flow tree model, its Legendrian DGA is generated in degree 0 by orbits $a_{1,2}, a_{2,1}$, in degree 1 by 
$b_{1,2}, b_{2,1}, c_{1,1}$, $c_{1,2}, c_{2,1}, c_{2,2}$ and in degree 2 by $e_{1,1}, e_{1,2}, e_{2,1}, e_{2,2}$. The 
differential in degree 1 is
\begin{align*}
\begin{cases}
\partial c_{1,1} &= \lambda \mu^{-2}-\lambda \mu^{-3} - (2Q - \mu) a_{1,2} - Q a_{1,2}^2 a_{2,1} \\ 
\partial c_{1,2} &= Q-\mu+\mu a_{1,2} + Q a_{1,2} a_{2,1} \\
\partial c_{2,1} &= Q-\mu + \lambda \mu^{-2} a_{2,1} + Q a_{1,2} a_{2,1} \\
\partial c_{2,2} &= \mu - 1 - Q a_{2,1} + \mu a_{1,2}a_{2,1} \\
\partial b_{1,2} &= \lambda^{-1} \mu^3 a_{1,2} - a_{2,1} \\
\partial b_{2,1} &= - a_{1,2} + \lambda \mu^{-3} a_{2,1}
\end{cases}
\end{align*}
The differentials of four of these chords are explained in detail in \cite[Section 7.2]{EkholmNgHigherGenus}, and the differentials of the other two chords are obtained in a similar manner. Note that we applied the transformation 
$\lambda \mapsto \lambda \mu^{-3}$ to the formulas in \cite{EkholmNgHigherGenus}, since in our conventions the $x$ curve must be null-homologous in 
$\R^3\setminus K$. On chords of degree 0, we get 
$$
\epsilon_{M_K}(a_{1,2}) = \mu^{-2}(\mu-1), \qquad \epsilon_{M_K}(a_{2,1}) = \mu(\mu-1).
$$

One can then compute the $\epsilon_{M_K}$-linearized differential, and see that its kernel in degree 1 is spanned by 
\begin{align*}
y_1 &= \mu^2(2-\mu)\, c_{2,1} + \mu(2\mu-1) \, c_{2,2} +(1-\mu^2) \, b_{1,2}\\
y_2 &= \mu^{4}(2\mu -1)\, c_{1,1} + \mu^2(3-4\mu + 2\mu^2) \, c_{2,1} + (1-\mu+\mu^2)\, b_{1,2} \\
y_3 &=  c_{1,2} - c_{2,1} - \lambda\mu^{-2} b_{1,2} \\
y_4 &= \lambda\mu^{-3} b_{1,2} + b_{2,1} 
\end{align*}

Writing $F_1 = (\partial y_1)_{\boldsymbol{a} \mapsto \epsilon_{M_K}(\boldsymbol{a})}$, we can compute
\begin{align*}
(\partial_\lambda F_1)|_{(\lambda,Q)=(1,1)} &= \mu(\mu-1)(\mu^2-\mu +1)\\
(\partial_Q F_1)|_{(\lambda,Q)=(1,1)} &= \mu(2-4\mu+6\mu^2 -3\mu^3)
\end{align*}
and the right side of formula \eqref{Alex from F} is $\mu^2(1-\mu+\mu^2)$, which is the Alexander polynomial of the trefoil knot (up to the usual ambiguity of a power of $\mu$). 
Using $y_2$ instead of $y_1$ would also yield the Alexander polynomial of the trefoil. If we used $y_3$ or $y_4$, then the integrand on the right side of \eqref{Alex from F} would be $\frac{0}{0}$, so these cycles would not be suitable for computing the Alexander polynomial.

From the DGA differential of chords of degree 1, one can also obtain the augmentation polynomial of the right-handed trefoil, which is 
$$
\Aug_K(\lambda, \mu, Q)=\lambda^2(\mu - 1) + \lambda(\mu^4-\mu^3 Q + 2 \mu^2 Q^2 - 2 \mu^2 Q - \mu Q^2 + Q^2) + (\mu^3 Q^4-\mu^4 Q^3).
$$
See again \cite[Section 7.2]{EkholmNgHigherGenus} for details. Formula \eqref{Alex from Aug 2} can then also be verified for the trefoil. 

\subsection{Other examples}
One can find a {\em Mathematica} notebook with augmentation polynomials of many knots, up to 10 crossings, on Lenhard Ng's webpage. 
Equation \eqref{Alex from Aug 2} gives the Alexander polynomial for all of these examples, {\em except} the ${8_{20}}$ knot and a connected sum of a left-handed trefoil with a right-handed trefoil. In both of these cases, $\partial_x \Aug_K|_{(\lambda,Q)=(1,1)} = \partial_Q \Aug_K|_{(\lambda,Q)=(1,1)} = 0$. Recall from Remark \ref{Alex from branch} that for a general knot with 2-dimensional augmentation variety, one can obtain the Alexander polynomial from a suitable branch of the $V_K$. In the case of the connected sum of a left-handed and a right-handed trefoil, one can understand geometrically the origin of different branches containing the line $\{(1,\mu,1)\}$. 

For this, recall that the augmentation variety restricted to $Q=1$ behaves well under connected sums, as explained in \cite[Proposition 5.8]{NgFramed}%
\footnote{We are assuming that $V_K|_{Q=1}$ is equal to the 2-variable augmentation variety (the Zariski closure in the $(\lambda,\mu)$-plane of the points $(\lambda_0,\mu_0,1)$ which can be lifted to an augmentation of $\mathcal A_K$), which is true in these examples but not known for a general knot.} 
: 
\begin{equation} \label{V sum}
V_{K_1\#K_2}|_{Q=1} = \{(\lambda_1 \lambda_2,\mu) | (\lambda_i,\mu,1)\in V_{K_i} \text{ for } i=1,2\}
\end{equation}
The 2-variable augmentation polynomial for the left-handed trefoil is 
$$
(\lambda-1)(\mu-1)(1+\lambda\mu^3).
$$
We have three branches: $\{\lambda = 1\}$ and $\{\mu=1\}$, corresponding to the fillings $M_K$ and $L_K$, respectively, as well as $\{\lambda = - \mu^{-3}\}$. The 2-variable augmentation variety for the right-handed trefoil is 
$$
(\lambda-1)(\mu-1)(\lambda+\mu^3).
$$
The branches are now $\{\lambda = 1\}$, $\{\mu=1\}$ and $\{\lambda = - \mu^{3}\}$. Equation \eqref{V sum} shows that the third branches of the 2-variable augmentation varieties of the left- and right-handed trefoils combine to give a new branch covering the line $\lambda = 1$ in the 2-variable augmentation variety of the connected sum. Allowing $Q\neq 1$, we get a new branch of $V_K$ containing the line $(1,\mu,1)$, as wanted.

\section{Disk potentials, SFT-stretching, and a deformation of the Alexander polynomial} \label{sec:SFT-stretch}
In this section we discuss connections between the study undertaken in this paper and more physically inspired treatments of knot theory, in particular open topological strings and Gromov--Witten theory.
We will see that our results suggest a natural $Q$-deformation, $Q=e^t$, of the Alexander polynomial, and that this is related to disk potentials and Floer torsion. We will also observe that the higher genus counterparts give a geometric framework for  invariants introduced in \cite{GPV,GPPV,GM}, as outlined in \cite{EGGKPS}. Our discussion in this section does not contain full proofs of all the transversality and gluing theorems needed. The perturbation scheme for counting bare curves, see \cite{EkholmShende}, suffices for this purpose, but we will not give a detailed treatment here. Nevertheless, the results discussed give important information about the SFT-stretched limit and serve as a starting point for connections to physically inspired invariants in low-dimensional topology. 

We will use the following result which is a consequence of Lemma \ref{LK MK augment}. Here a generalized holomorphic disks is a tree of finitely many holomorphic disks with boundary on a Lagrangian, where the edges correspond to intersections with suitably chosen bounding chains. For details, see \cite{AENV, EkholmNgHigherGenus}. 

\begin{lemma}\label{2.3 iii}
With notation as in Lemma \ref{LK MK augment} the following holds. For each Reeb chord $a$ of degree 0, the space of generalized holomorphic disks on $L^{\delta}$ with a positive puncture asymptotic to $a$ is canonically isomorphic to the corresponding space for $L^{0}$ (which is simply the space of ordinary holomorphic disks). It follows in particular that $L^{\delta}$ and $L^{0}$ define identical augmentations.  
\end{lemma}

\begin{proof}
This follows from Lemma \ref{LK MK augment}: generalized holomorphic disks are combinations of one disk with positive puncture and several disks without punctures. Since there are no disks without punctures and the disks with punctures are naturally identified as $\delta$ varies between 0 and $\delta_0$, the statement follows. 
\end{proof}

\subsection{Basic disk potentials and SFT-stretching}

Given \eqref{Alex from F}, we could define a $Q$-deformation, $Q=e^{t}$, of the Alexander polynomial by taking
\begin{align} \label{deform Alex}
\Delta_K(e^p,e^t) &= (1-e^p) \exp \left( \int - \left( \frac{\partial_t F}{\partial_x F} \right) \Big|_{\{x=0\}} dp \right) = \\ \notag
&= (1-e^p) \exp \left( \int - \left( \frac{\partial_t \Aug_K}{\partial_x Aug_K} \right) \Big|_{\{x=0\}} dp \right),
\end{align}
where we can write the second equality when $\partial_x \Aug_K$ does not vanish. We will see how this deformation is related to disk potentials and Floer torsion. 

Consider first the disk potential for $L=L_{K}^{\delta}$, for $\delta>0$. Here $L\cap\R^{3}=\varnothing$ and we consider SFT-stretching along an $\epsilon$-sphere bundle $S_{\epsilon}T^{\ast}\R^{3}$ for $\epsilon$ small compared to $\delta$, as in \cite{EkholmShende}. Under such stretching all holomorphic curves with boundary on $L\cup\R^{3}$ leave $\R^{3}$, since there are no closed Reeb orbits in $ST^{\ast}\R^{3}$. Thus, after stretching the curves represent relative homology classes in $H_{2}(T^{\ast}\R^{3}\setminus\R^{3},L)$, which is generated by $x$ and $t$ (recall that the meridian $p$ is a boundary in $\Ld$). In particular, counting generalized holomorphic disks with boundary on $L$ in a sufficiently stretched almost complex structure gives a Gromov--Witten disk potential 
\[ 
W_{K}^0({x},t) = \sum_{n>0} C_{n}(e^t)e^{nx},
\] 
where $C_{n}(e^t)$ is the count of curves with boundary in the class of $n$ times the generator of $H_1(L)$. We then have the following local parameterization of a branch of the augmentation variety:
\begin{equation} \label{p=dW/dx}
p=\frac{\partial W_{K}^0}{\partial x},
\end{equation}
see \cite{AENV} and \cite{EkICM}. (One can also think of the SFT-stretched curve counts as taking place in the resolved conifold, after conifold transition.) 

We next show that we can define a disk potential in a similiar way also for $L=M_{K}^{\delta}$, although $\Md$ may intersect $\R^{3}$. To this end, we first slightly deform $M_{K}^{\delta}$ so that its intersection with some neighborhood of the 0-section coincides with a finite number of cotangent fibers. We then SFT-stretch in a smaller $\epsilon$-neighborhood. In this case the situation is less straightforward. However, the geometry in the negative end after stretching is easy to understand. Using the flat metric we have Reeb chords $\gamma_{ij}$ between any two distinct fibers $F_i$, $F_j$ and no other Reeb chords. Furthermore, the Lagrangian $L$ in the negative end consists of fibers $F_{i}$.

\begin{lemma}
The dimension of any moduli space of holomorphic curves with boundary on $F=\bigcup F_{i}$ and $m$ positive punctures equals $m$.
\end{lemma}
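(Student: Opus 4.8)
The plan is to pass to a linear model and reduce to an index computation. Recall that near the zero section $\Md$ has been deformed to agree with $\bigcup_i F_i$, so the bottom level of the SFT splitting is $(T^{\ast}\R^3, F)$ with $F=\bigsqcup_i F_i$, $F_i=T^{\ast}_{q_i}\R^3$. Under the identification of the completed $\epsilon$--disk bundle with $\C^3$ furnished by the flat metric, the natural almost complex structure $J$ becomes the standard (integrable, translation--invariant) one, and each $F_i$ becomes the affine Lagrangian plane $q_i+i\R^3$. The essential feature of this model is that all the $F_i$ are \emph{parallel} translates of $i\R^3$. Hence for any holomorphic curve $u\colon(\Sigma,\partial\Sigma)\to(\C^3,F)$ the pulled-back bundle $u^{\ast}T\C^3\cong\Sigma\times\C^3$ is trivial, the linearized Cauchy--Riemann operator is $\bar\partial$ with boundary values in the \emph{constant} Lagrangian subbundle with fiber $i\R^3$, so the Maslov term $\mu(u)$ vanishes; moreover the whole problem splits as a direct sum $D_u=D_u^{(1)}\oplus D_u^{(2)}\oplus D_u^{(3)}$ over the three complex coordinates, with $D_u^{(k)}$ a $\bar\partial$--operator into $\C$ with boundary on vertical lines.

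Next I would invoke the standard index formula for such curves. Since the bottom level has no concave end ($ST^{\ast}\R^3$ carries no closed Reeb orbits and $T^{\ast}\R^3$ is a Liouville manifold), a curve there carries only its $m$ positive punctures, asymptotic to the chords $\gamma_{ij}$, and
$$
\dim\mathcal{M} \;=\; (n-3)\,\chi(\Sigma)\;+\;\mu(u)\;+\;\sum_{r=1}^{m}|\gamma_{i_r j_r}|,\qquad n=3 .
$$
The term $(n-3)\chi(\Sigma)$ vanishes identically --- this is precisely why the answer is insensitive to the genus and to the number of boundary components of $\Sigma$ --- and $\mu(u)=0$ by the previous paragraph. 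It therefore remains only to establish that $|\gamma_{ij}|=1$ for every $i\neq j$.

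For this I would use the Morse--theoretic description of Reeb chords of conormals from \cite{KCH}: $\gamma_{ij}$ corresponds to the unique geodesic from $q_i$ to $q_j$ in flat $\R^3$, which is a nondegenerate critical point of the length functional of Morse index $0$, and the resulting Conley--Zehnder/Robbin--Salamon computation gives degree $1$ (the grading contribution of the index--$I$ geodesic between two point conormals in $\R^n$ being $I+n-2$, here $0+3-2$). Transversality is not an issue: every curve has injective points near its positive punctures, so a generic split--compatible perturbation of $J$ suffices, exactly as in Section \ref{sec:flow lines and holo curves}. Combining the three inputs yields $\dim\mathcal{M}=0+0+m=m$.

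The delicate step is the grading computation in combination with the coordinate splitting: in the directions $k$ with $(q_i)_k=(q_j)_k$ the $k$--th component of the geodesic is constant, so the asymptotic operator of $D_u^{(k)}$ is degenerate and its contribution must be read off from a Robbin--Salamon index after a small perturbation; one must then check that the contributions over $k=1,2,3$ always total $1$, independently of the relative position of $q_i$ and $q_j$. This is the simplest instance --- with no knotting and no front singularities --- of the index bookkeeping carried out in \cite{EkholmFlowTrees}, so I expect it to go through, but it is the part that requires genuine care.
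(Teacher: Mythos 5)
Your proposal is correct and follows essentially the same route as the paper: the paper's proof is the one-line observation that the Conley--Zehnder index of the chords $\gamma_{ij}$ vanishes (in its convention) together with the citation of the general dimension formula, which is exactly your computation that the Maslov term vanishes for parallel affine Lagrangian planes, that $(n-3)\chi=0$ kills the dependence on the topology of the domain, and that each chord contributes $1$ per positive puncture. The ``delicate step'' you flag does go through --- one can check it directly on the explicit two-punctured strip between two fibers, whose unparametrized moduli space is visibly two-dimensional --- and your degree count $|\gamma_{ij}|=1$ agrees with the paper's $CZ=0$ up to the usual shift in grading conventions.
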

  
\begin{proof}
The Conley--Zehnder indices of the Reeb chords are 0. The dimension formula then follows from \cite{ECL}.	
\end{proof}

Using this formula we find that $M_{K}^{\delta}$ defines a basic disk potential that is invariant under deformations.

\begin{figure}%{r}{0.35\textwidth}
  \begin{center}
    \def\svgwidth{.2\textwidth}
    %% Creator: Inkscape 1.2.2 (732a01da63, 2022-12-09), www.inkscape.org
%% PDF/EPS/PS + LaTeX output extension by Johan Engelen, 2010
%% Accompanies image file '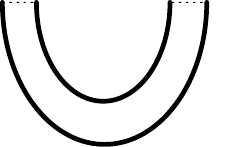' (pdf, eps, ps)
%%
%% To include the image in your LaTeX document, write
%%   \input{<filename>.pdf_tex}
%%  instead of
%%   \includegraphics{<filename>.pdf}
%% To scale the image, write
%%   \def\svgwidth{<desired width>}
%%   \input{<filename>.pdf_tex}
%%  instead of
%%   \includegraphics[width=<desired width>]{<filename>.pdf}
%%
%% Images with a different path to the parent latex file can
%% be accessed with the `import' package (which may need to be
%% installed) using
%%   \usepackage{import}
%% in the preamble, and then including the image with
%%   \import{<path to file>}{<filename>.pdf_tex}
%% Alternatively, one can specify
%%   \graphicspath{{<path to file>/}}
%% 
%% For more information, please see info/svg-inkscape on CTAN:
%%   http://tug.ctan.org/tex-archive/info/svg-inkscape
%%
\begingroup%
  \makeatletter%
  \providecommand\color[2][]{%
    \errmessage{(Inkscape) Color is used for the text in Inkscape, but the package 'color.sty' is not loaded}%
    \renewcommand\color[2][]{}%
  }%
  \providecommand\transparent[1]{%
    \errmessage{(Inkscape) Transparency is used (non-zero) for the text in Inkscape, but the package 'transparent.sty' is not loaded}%
    \renewcommand\transparent[1]{}%
  }%
  \providecommand\rotatebox[2]{#2}%
  \newcommand*\fsize{\dimexpr\f@size pt\relax}%
  \newcommand*\lineheight[1]{\fontsize{\fsize}{#1\fsize}\selectfont}%
  \ifx\svgwidth\undefined%
    \setlength{\unitlength}{120.30396242bp}%
    \ifx\svgscale\undefined%
      \relax%
    \else%
      \setlength{\unitlength}{\unitlength * \real{\svgscale}}%
    \fi%
  \else%
    \setlength{\unitlength}{\svgwidth}%
  \fi%
  \global\let\svgwidth\undefined%
  \global\let\svgscale\undefined%
  \makeatother%
  \begin{picture}(1,0.58548281)%
    \lineheight{1}%
    \setlength\tabcolsep{0pt}%
    \put(0,0){\includegraphics[width=\unitlength,page=1]{strip.pdf}}%
    \put(0.21142912,0.4053761){\color[rgb]{0,0,0}\makebox(0,0)[lt]{\lineheight{0}\smash{\begin{tabular}[t]{l}$F_i$\end{tabular}}}}%
    \put(0.69769473,0.03132492){\color[rgb]{0,0,0}\makebox(0,0)[lt]{\lineheight{0}\smash{\begin{tabular}[t]{l}$F_j$\end{tabular}}}}%
    \put(0,0){\includegraphics[width=\unitlength,page=2]{strip.pdf}}%
  \end{picture}%
\endgroup%

  \end{center}
  \caption{Strip with two positive ends and boundary on two cotangent fibers $F_i$ and $F_j$}
  \label{strip_fig}
\end{figure}

\begin{proposition}\label{l:diskpotentialMK}
For a sufficiently stretched almost complex structure, all holomorphic disks with boundary on $M_{K}^{\delta}$ lie outside the $\epsilon$-neighborhood. Define the corresponding basic disk potential 
\[ 
U_{K}^0(p,t)= \sum_{n>0} B_{n}(e^t)e^{np},
\]	
counting generalized holomorphic disks. This $U_{K}^0$ is invariant under deformations and 
\begin{equation} \label{x=dU/dp}
x=\frac{\partial U_{K}^0}{\partial p}
\end{equation}
gives a local branch of the augmentation variety.
\end{proposition}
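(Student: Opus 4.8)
The plan is to prove the confinement statement by an index count built on the preceding lemma and SFT--compactness, to read off $U_{K}^{0}$ from the resulting curve count, and then to deduce the last assertion as the mirror of the relation \eqref{p=dW/dx} for $L_{K}^{\delta}$, with the roles of $x$ and $p$, and of $L_{K}^{\delta}$ and $M_{K}^{\delta}$, interchanged.

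First I would establish the confinement. After the deformation described above, $M_{K}^{\delta}$ meets the $\epsilon$-disk bundle $D_{\epsilon}T^{\ast}\R^{3}$ (the region bounded by $S_{\epsilon}T^{\ast}\R^{3}$) in a finite union $\bigcup_{j}D_{\epsilon}T^{\ast}_{q_{j}}\R^{3}$ of cotangent-fibre balls. Stretch the almost complex structure along $S_{\epsilon}T^{\ast}\R^{3}$. If a holomorphic disk with boundary on $M_{K}^{\delta}$ of index at most one were to enter $D_{\epsilon}T^{\ast}\R^{3}$ for a sequence of ever longer necks, SFT--compactness would produce a holomorphic building whose lowest level contains a non-constant curve in $D_{\epsilon}T^{\ast}\R^{3}$ with boundary on $\bigcup_{j}D_{\epsilon}T^{\ast}_{q_{j}}\R^{3}$ and some number $m$ of positive punctures at the Reeb chords $\gamma_{ij}$. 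Since a cotangent fibre is exact and contractible, such a curve with $m=0$ is constant; and by the preceding lemma a curve with $m\ge 1$ moves in an $m$-dimensional moduli space. Bookkeeping the dimensions of the levels of the building (each symplectization level costing one parameter, the $\gamma_{ij}$-spaces being discrete) shows that the corresponding boundary stratum of the original moduli space has dimension at least $m\ge 1$ larger than allowed, a contradiction. By Gromov and SFT compactness this holds uniformly once the neck is long enough, so for a generic, sufficiently stretched $J$ every holomorphic disk with boundary on $M_{K}^{\delta}$ of index $0$ or $1$ lies in the region where $M_{K}^{\delta}$ is graphical.

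With the disks thus confined, I would define $U_{K}^{0}$. A holomorphic disk with boundary on $M_{K}^{\delta}$ has boundary class a multiple $n$ of the generator $p$ of $H_{1}(M_{K}^{\delta};\Z)\cong\Z$, and $n>0$ since the $\omega$-area of the disk equals $\delta n$ up to the zero-area class $t$ and is positive for non-constant disks. Counting generalized holomorphic disks (trees of such disks joined along bounding chains, with multiple covers weighted by the bare-curve perturbation scheme of \cite{EkholmShende}), organised by $n$ and by the $t$-class --- equivalently, performing the count in the resolved conifold after conifold transition, with $e^{t}$ the exponentiated K\"ahler parameter --- yields
\[
U_{K}^{0}(p,t)=\sum_{n>0}B_{n}(e^{t})\,e^{np},\qquad B_{n}(e^{t})\in\C[e^{\pm t}].
\]
Invariance under deformations of $\delta$, of the deformation making $M_{K}^{\delta}$ fibre-like near the zero section, and of the compatible $J$, is the standard cobordism argument for counts of bare curves: a generic path of data gives a compact $1$-dimensional cobordism of moduli spaces, whose only codimension-one degenerations are Reeb-chord breakings and boundary bubblings, both accounted for in the generalized count, together with a disk crossing $S_{\epsilon}T^{\ast}\R^{3}$, which the confinement argument (now applied along the path, i.e.\ in the index $1$ case) excludes.

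Finally, $\{x=\partial_{p}U_{K}^{0}(p,t)\}$ is a local branch of the augmentation variety. This is the exact analogue of \eqref{p=dW/dx}, and I would repeat that derivation with $x$ and $p$ exchanged: $M_{K}^{\delta}$, twisted by the flat $\C^{\ast}$-connection of meridian holonomy $e^{p}$ and viewed in the conifold-transitioned geometry, induces augmentations; inserting a boundary marked point on a chain dual to the meridian contributes, from a disk in class $np$, the factor $nB_{n}(e^{t})e^{np}$, so the total count is $\partial_{p}U_{K}^{0}$. The boundary-of-moduli-space analysis of Section \ref{sec:main proof} (the same bookkeeping as in the proof of Proposition \ref{Alex G fibered}, with the two curves in $\LK$ interchanged) shows that the vanishing of the algebraic count of the associated $1$-dimensional moduli space is precisely the equation $x=\partial_{p}U_{K}^{0}(p,t)$, holding exactly on the locus swept out by these augmentations; near $(1,1,1)$ this locus coincides with the $2$-dimensional branch of $V_{K}$ identified in the proof of Theorem \ref{T:Alex Aug} (cf.\ Proposition \ref{full V_K 2d}), hence it is a local branch of $V_{K}$. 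I expect the main obstacle to be the analytic foundations rather than this bookkeeping: achieving transversality in the presence of multiply covered disks in the SFT--stretched and conifold-transitioned target, together with the corresponding gluing theorems. As the surrounding text notes, a complete treatment of these points requires the perturbation scheme of \cite{EkholmShende}, which I would invoke rather than reprove.
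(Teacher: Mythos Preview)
Your approach is essentially the paper's: use the preceding lemma to count dimensions of lower-level curves in an SFT limit, conclude that no rigid disk can break into the negative end, and then invoke the $L_K$-argument from \cite{AENV} with $x$ and $p$ interchanged for the augmentation-variety statement. One refinement is worth making explicit. You allow the lower-level curve to have $m\ge 1$ positive punctures, but in fact $m\ge 2$ is forced: the Reeb chords $\gamma_{ij}$ connect \emph{distinct} fibers $F_i\ne F_j$, so the boundary of a disk with a single puncture would be an arc in $\bigcup_k F_k$ joining a point of $F_i$ to a point of $F_j$, which is impossible since the fibers are disjoint. The paper uses exactly this to get a lower level of dimension at least $2$, hence upper pieces of total index $\le -2$. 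This stronger bound is what makes your invariance step go through: in a generic $1$-parameter family, index $-1$ upper curves can appear at isolated moments, so your bound $m\ge 1$ alone would not exclude a wall-crossing through the neck; $m\ge 2$ does. With that observation added, your argument matches the paper's.
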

\begin{proof}
We need to show that under stretching there are no broken disks with lower level in the negative end. The minimal dimension of such a lower level is 2, corresponding to a strip with two positive ends, see Figure \ref{strip_fig}. Since it came from a rigid disk, the strip is capped at an upper level by two punctured disks (the cappings might in principle consist of more complicated configurations, which would also contain components of negative index), and the sum of their expected dimensions is $-2$, hence they do not exist generically. It follows that no disk can lie in the neighborhood and that the count is invariant in 1-parameter families (degenerations that do not involve the negative end are treated as usual, see \cite{AENV}). The last statement then follows exactly as in \cite[Section 6.9]{AENV}.	
\end{proof}	

\begin{remark}
The basic potential function $U_K^0$ was constructed in \cite{AENV} for fibered knots. Proposition \ref{l:diskpotentialMK} extends the construction of $U_K^0$ to an arbitrary knot.  
\end{remark}

Recall now the integral in \eqref{Alex from F}. As we observe in Remark \ref{Alex from branch}, the function $F$ locally cuts out a branch of $V_K$, along which we can write  
\[ 
\int- \frac{\partial_{t}F}{\partial_{x} F} %\right|_{(x,t)=(0,0)} 
dp = \int \partial_{t}x(t,p) %\right|_{t=0} 
dp.
\]
If this branch was cut out by \eqref{x=dU/dp}, then
\begin{equation} \label{eq:Alex U}
 \int \partial_{t}x(t,p) 
dp  = \int \partial_{t}\partial_{p} U_{K}^{0}  
dp =  \partial_{t} U_{K}^{0},  
\end{equation}
hence in this case the $Q$-deformation of the Alexander polynomial \eqref{deform Alex} could be written as   
\[ 
\Delta_K(e^p,e^t) = (1-e^p) \exp\left(\partial_{t}U_{K}^{0}\right).
\]
We will see in Section \ref{sec:defAlexfiber} that there is strong evidence that this formula holds for fibered knots, but not for every knot. For general $K$, one must also take into account disks with negative punctures at the Reeb chords $\gamma_{ij}$ mentioned above, and the disk potential for $M_K$ is not unique.

\begin{remark}
As we just saw, if the two branches of $V_K$ coincide, we have 
\[ 
\Delta_K(e^p) = (1-e^p) \lim_{t\to 0}\exp\left(\partial_{t}U_{K}^{0}\big|_{t=0}\right).
\]
One might wonder how much information about $U_K^0(p,t)$ is known by $\Delta_K(\mu)$. Replacing $M_K$ and $U_K^0$ with $L_K$ and $W_K^0$ suggests that much information is lost in the limit. Using equations \eqref{d Q (F-G)} and \eqref{d mu (F-G)} and %Corollary \ref{AnL} and
Proposition \ref{partial mu G partial Q G}, the result of exchanging $x$ and $p$ in Theorem \ref{T:Alex Aug} is
\begin{align}
(1-e^x) &\exp \left( \int - \left(\frac{\partial_t F}{\partial_p F}\right) \Big|_{(e^x,1,1)} dx \right) = (1-e^x) \exp \left( \int \frac{e^x}{1-e^x} dx \right) = 1
\end{align}
for every knot $K$, and this expression remembers nothing about the disk potential $W_K^0$, which is indeed very different for different knots. 
\end{remark}

\subsection{Invariance of Floer torsion}
A first sign that input from extra negative punctures at the Reeb chords $\gamma_{ij}$ may be necessary comes from the Floer cohomological torsion of $CF^*(\R^3,\Md)$, defined as the product 
\begin{equation}\label{eq:Floertorsion}
\tau_{K}(e^{p},e^{t})=\zeta_{\rm{an}}(e^{p},e^{t})\cdot \tau_{\rm{str}}(e^{p},e^{t}).
\end{equation}
Here, $\zeta_{\rm{an}}(e^{p},e^{t})$ is a count of all disconnected generalized holomorphic annuli from $\R^3$ to $\Md$ in $T^{\ast}\R^3$. Since there are no closed Reeb orbits in $ST^*\R^3$, generalized annuli are (trees of) disks with an intersection with the 4-chain $V$ from \eqref{4 chain} in the SFT-stretched limit, hence 
\begin{equation} \label{eq:zeta U}
\zeta_{\rm{an}}(e^{p},e^{t})=\exp(\partial_{t} U_{K}^{0}(p,t)).
\end{equation}

On the other hand, $\tau_{\rm{str}}(e^{p},e^{t})=\det(D)$, where $D$ is the Floer differential in the Floer cohomology complex $CF^{\ast}(\R^3,\Md)$ with Novikov coefficients. This differential counts two-level curves in the stretched limit, with the lower level being a trivial strip over a straight line. We show in Proposition \ref{prp:invtauK} that $\tau_{K}$ is invariant under deformations. %In particular its constant term in the $Q\to1$ limit is the leading coefficient of the Alexander polynomial and, if different from $\pm 1$, must come from the second factor $\tau_{\rm{str}}$. 
As discussed in Section \ref{sec:defAlexnonfiber} the study of $\tau_K$ leads to a more general notion of disk potential for $M_{K}$, counting also disks with negative punctures at the Reeb chords $\gamma_{ij}$. We denote such generalized disk potentials by $U_{K}^{0,\epsilon}$. 

\begin{remark}
To compute the powers of $e^t$ in counts of disks and annuli, we need to define an intersection number with the $4$-chain $V$. To force such intersections into the interior of the curves, we must specify a way to push the boundaries of the disks in $\R^{3}$ off of $\R^{3}$ in $T^{\ast}\R^{3}$, compare \cite[Section 2]{EkholmNgHigherGenus}. Also, the 4-chain intersects $M_{K}$ so there will be contributions to $e^{t}$ from linking with this locus, and defining them needs additional choices of capping paths. We will not go into the details of making all these choices here. 
\end{remark}

\begin{remark}
In view of \cite{EkholmNgHigherGenus,EkICM}, it is natural to consider the all genus counterpart $U_{K}^{\epsilon}(p,t,g_{s})$ of the disk potential:
\[ 
U_{K}^{\epsilon}(p,t,g_{s})=g_{s}^{-1}U_{K}^{0,\epsilon}(p,t)+U_{K}^{1,\epsilon}(p,t)+\dots+ g_{s}^{-r}U_{K}^{r+1,\epsilon}(p,t)+\dots,
\]
where $U_{K}^{r+1,\epsilon}$ counts generalized holomorphic curves of Euler characteristic $\chi=-r$, possibly with negative punctures.
The Lagrangians $M_{K}$ and $L_{K}$ share ideal contact boundary $\Lambda_{K}$. The Legendrian SFT of $\Lambda_{K}$ determines an operator $\widehat{\mathcal{A}}_{K}(e^{\hat p},e^{\hat x},e^{t},e^{g_{s}})$, where the operator $e^{\hat p}$ acts as multiplication by $e^{p}$ and $e^{\hat x}$ acts as $e^{-g_{s}\partial_{p}}$. In particular, $e^{\hat p}e^{\hat x}=e^{g_{s}}e^{\hat x}e^{\hat p}$. This is a quantization of the augmentation polynomial $\Aug_{K}$ and, through holomorphic curve counting for $L_{K}$, $\widehat{\mathcal{A}}_{K}(e^{\hat p},e^{\hat x},e^{t},e^{g_{s}})=0$ is the recursion relation for the colored HOMFLY-polynomial. In view of this, one expects from SFT that $\widehat{\mathcal{A}}_{K}\Psi_{K}=0$, where $\Psi_{K}=\exp\left(U_{K}^{\epsilon}\right)$. This observation together with Theorem \ref{T:Alex Aug}, which in this context gives the semi-classical limit of the un-normalized expectation value $\exp\left(Ng_{s}\partial_{t}\right)\Psi_{K}(p)$, lead to a large $N$ version, or $Q$-deformation, of the invariant $\widehat{Z}$, \cite{GPV,GPPV,GM}, for knot complements, see \cite{EGGKPS}.
\end{remark}

Our next result will be used to show that the basic disk potential $U_K^0$ gives in general a different branch of the augmentation variety than that associated to the Alexander polynomial. We will write $m$ for a meridian curve representing the generator $p$ of $H_1(M_K)$ and $\lambda_{\rm std} = \sum_i p_i dq_i$ for the standard Liouville form in $T^*\R^3$. 	

\begin{proposition}\label{prp:invtauK}
The torsion $\tau_{K}$ is invariant under deformations (changing $J$, Lagrangian isotopies of $\Md$, etc.) for which $\int_{m}\lambda_{\rm std}$ remains positive. 
\end{proposition}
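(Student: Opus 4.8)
The plan is to show that $\tau_K$ is the torsion of a well-defined Floer-theoretic object, namely the $\Z[\mu^{\pm 1}, Q^{\pm 1}]$-twisted Lagrangian Floer cohomology $CF^*(\R^3, M_K^\delta)$ together with its bounding cochains, and then invoke the general invariance of torsion under the moves in question. The hypothesis $\int_m \lambda_{\rm std} > 0$ is exactly what is needed to keep the $\mu$-adic filtration (and hence the Novikov completion) well behaved, so that the relevant complexes have finitely generated torsion homology and the torsion is a well-defined element of the Novikov field modulo units. I would organize this in two pieces: first that $\zeta_{\rm an}(e^p, e^t)$ and $\tau_{\rm str}(e^p, e^t)$ separately (or at least their product) only depend on the geometric data up to the expected ambiguity, and second that the moves in the statement connect any two choices through families for which the positivity constraint persists.

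First I would set up the Floer complex: $M_K^\delta$ is a (non-exact, but asymptotically cylindrical) Lagrangian filling of $\Lambda_K$, and $CF^*(\R^3, M_K^\delta)$ is generated by the intersection points $\xi_i, \eta_j$, with differential $D$ counting the strips whose matrix is \eqref{matrices}, with coefficients in the Novikov ring in $\mu = e^p$ (and formal parameter $Q = e^t$). The key point, established already in the body of the paper (Lemmas \ref{l:boundaryoftriangles} and \ref{l:brokentriangles}, and the discussion around \eqref{eq:Floertorsion}), is that the SFT-stretched annuli from $\R^3$ to $M_K^\delta$ are exactly trees of disks decorated by intersections with the $4$-chain $V$, so $\zeta_{\rm an}(e^p,e^t) = \exp(\partial_t U_K^{0,\epsilon}(p,t))$ by \eqref{eq:zeta U}, while $\tau_{\rm str} = \det D$. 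Then $\tau_K = \zeta_{\rm an}\cdot\tau_{\rm str}$ is precisely the Reidemeister--Floer torsion of the pair: the "acyclic part" $\det D$ times the correction coming from the bounding chain / disk potential. I would then cite the invariance of Floer torsion under change of almost complex structure and Hamiltonian isotopy from \cite{LeeTorsion}, upgraded to the local-system / bounding-cochain setting. The analogue of a continuation map induces an isomorphism of the twisted complexes commuting with the local system up to chain homotopy, and torsion is a chain-homotopy invariant of such isomorphisms (up to units in the coefficient ring), so $\det D$ changes by a unit; simultaneously the disk count $U_K^{0,\epsilon}$ changes in the compensating way because the moduli spaces of annuli and the moduli spaces of disks-with-$V$-insertion are cobordant under the same deformation. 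The product $\tau_K$ is therefore unchanged.

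The second piece is to check that the deformations allowed --- varying $J$, Lagrangian isotopies of $M_K^\delta$, changing $\delta$, re-choosing the push-off and capping data --- can be realized by paths $(J_s, L_s)$ for which $\int_m \lambda_{\rm std}$ (the symplectic area swept by a meridian, equivalently the leading $\mu$-exponent in the complex) stays positive, so that the Novikov completion does not degenerate along the path and the cobordism argument applies uniformly. For generic such paths the moduli spaces of annuli and strips form $1$-parameter families whose boundaries are the initial and final counts together with (i) breakings at intersection points and Reeb chords, which cancel because $y$ is a cycle and because of the algebraic identities $E_\xi D + D E_\eta = (\partial_x F)\,\partial_p D$ used in Proposition \ref{Alex F general}, and (ii) bubbling that is excluded by Lemma \ref{LK MK augment}(i) for $\delta$ small. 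Assembling these boundary identifications gives $\partial_t(\det D)/\det D + \partial_t^2 U_K^{0,\epsilon} = $ (exact), i.e. $\partial_t \log \tau_K$ is deformation-independent, and a basepoint normalization (e.g. the SFT-stretched model where things are computed explicitly) pins down $\tau_K$ itself.

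\textbf{Main obstacle.} The hard part will be the transversality and gluing for the annuli and disks-with-$V$-insertion in the presence of multiply covered curves: the generalized holomorphic annuli involve branched covers with constant components, and the perturbation scheme for counting bare curves from \cite{EkholmShende} must be carried along consistently through the $1$-parameter families, so that the cobordism statement "$\zeta_{\rm an}$ changes by exactly $\exp$ of the change in $\partial_t U_K^{0,\epsilon}$" is actually valid and not just formally true. This is the same analytic input flagged at the start of Section \ref{sec:SFT-stretch} as not being treated in full detail; I would state the invariance conditional on that perturbation framework, exactly as the paper does elsewhere, and reduce everything else to the already-established boundary identifications of moduli spaces and to \cite{LeeTorsion}.
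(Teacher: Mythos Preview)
Your approach differs substantially from the paper's, and it has a genuine gap.

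The paper gives a direct bifurcation analysis in a generic 1-parameter family. The codimension-one phenomena are index $(-1)$ strips in $\mathcal{M}(\xi_i,\xi_j)$, $\mathcal{M}(\eta_i,\eta_j)$ (with $i\neq j$), and $\mathcal{M}(\xi_i,\xi_i)$, $\mathcal{M}(\eta_i,\eta_i)$. The first two change $D$ by row or column operations, leaving $\det D$ unchanged. The crucial observation is that a $(-1)$-disk $\delta\in\mathcal{M}(\xi_i,\xi_i)$ (or $\mathcal{M}(\eta_i,\eta_i)$) appears \emph{simultaneously} as a strip bifurcation and as the pinch of an annulus: the determinant gets multiplied by $(1+\delta)$, while the annulus count loses exactly a factor $(1+\delta)$, so $\tau_K=\zeta_{\rm an}\cdot\det D$ is unchanged. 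You never identify this specific cancellation mechanism, and your vague statement that ``$U_K^{0,\epsilon}$ changes in the compensating way'' does not substitute for it.

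Your invocation of the identity $E_\xi D + D E_\eta = (\partial_x F)\,\partial_p D$ from Proposition \ref{Alex F general} is misplaced: those matrices count curves with a positive puncture at the Reeb-chord cycle $y$, and that identity compares different moduli spaces at a \emph{fixed} geometric datum; it says nothing about how $D$ or the annulus count vary along a deformation. Similarly, Lemmas \ref{l:boundaryoftriangles} and \ref{l:brokentriangles} concern moduli with a $y$-puncture and are irrelevant here. The appeal to \cite{LeeTorsion} is reasonable in spirit, but that reference does not handle the annulus correction $\zeta_{\rm an}$, so ``upgrading'' it is precisely the content of the proposition and cannot be treated as a black box.
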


\begin{proof}
We want to show that $\tau_{K}(p,t)$ is invariant under deformations.
Consider a generic 1-parameter family of geometric data with a perturbation scheme in which output punctures in several level holomorphic disks buildings are time-ordered. Using the positivity hypothesis, we can show the invariance of $\tau_{K}$ by studying codimension one phenomena associated to the presence of intersection points. Bifurcations of Floer strips are associated to four types of curves (see Figure \ref{bdry8_fig}): 
\begin{itemize}
	\item A disk of virtual dimension $(-1)$ in $\mathcal{M}(\xi_{i},\xi_{j})$, $\xi_{i}\ne \xi_{j}$.
	\item A disk of virtual dimension $(-1)$ in $\mathcal{M}(\eta_{i},\eta_{j})$, $\eta_{i}\ne \eta_{j}$.
	\item A disk of virtual dimension $(-1)$ in $\mathcal{M}(\xi_{i},\xi_{i})$.
	\item A disk of virtual dimension $(-1)$ in $\mathcal{M}(\eta_{i},\eta_{i})$. 
\end{itemize} 
We note that the last two $(-1)$ curves also arise as the limits of pinched holomorphic annuli, see Figure \ref{bdry9_fig}. 
\begin{figure}%{r}{0.35\textwidth}
	\begin{center}
		\def\svgwidth{.85\textwidth}
		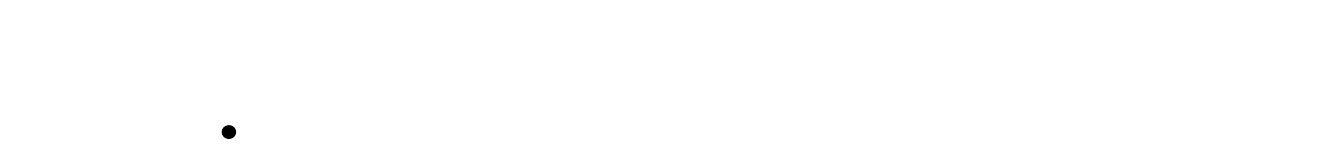
	\end{center}
	\caption{Splitting of disks of virtual dimension $-1$ from Floer strips}
	\label{bdry8_fig}
\end{figure}

\begin{figure}%{r}{0.35\textwidth}
	\begin{center}
		\def\svgwidth{.6\textwidth}
		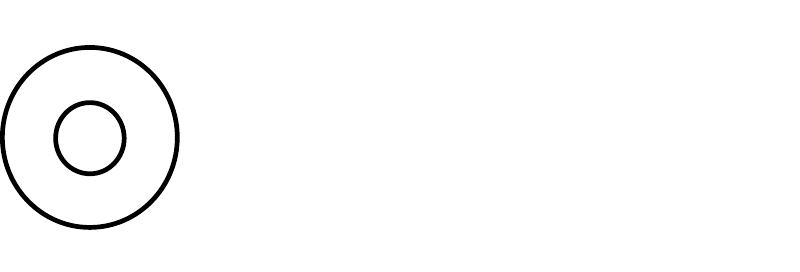
	\end{center}
	\caption{Pinching of holomorphic annuli into virtual dimension $-1$ strips}
	\label{bdry9_fig}
\end{figure}

To see that $\tau_{K}$ remains invariant under the first two $(-1)$-instances, we note that it is well-known how they affect the differential $D(e^{p},e^{t})$: by row or column operations. This leaves $\det(D)$ fixed and hence $\tau_{K}$ does not change.

Consider next a $(-1)$-disk $\delta$ in $\mathcal{M}(\xi_{i},\xi_{i})$. Here the differential changes by adding this disk to $\xi$. This means that the determinant is multiplied by $(1+\delta)$. On the other hand, the contribution to $\tau_{K}$ from the annulus that pinched giving $\delta$ is multiplication by $(1+\delta)$ as well (consider all disconnected curve configurations without and with this annulus). Thus the factor $1+\delta$ just moves between the annulus count and the determinant, leaving $\tau_{K}$ unchanged.
\end{proof}
 
\begin{remark}
Note that the deformation invariance of $U_K^0$ in Proposition \ref{l:diskpotentialMK} and of $\tau_K$ in Proposition \ref{prp:invtauK}, combined with \eqref{eq:zeta U}, imply that in \eqref{eq:Floertorsion} both factors are invariant for sufficiently SFT-stretched almost complex structures. Another way to see this is to observe that a $(-1)$-disk where they would change would have to limit to a Reeb chord connecting a fiber to itself, and there are no such Reeb chords in $ST^{\ast}\R^{3}$. 
\end{remark}

\subsection{Deformation of the Alexander polynomial for fibered knots}\label{sec:defAlexfiber}
Let $K$ be a fibered knot. Then there is a function $f\colon M_{K} \to S^1$ without critical points, and $df$ can be used to shift $M_{K}$ off of $\R^{3}$ in $T^{\ast}\R^{3}$. This means that the Floer torsion of $M_{K}$ can be expressed as
\[ 
\tau_{K}=\zeta_{\rm{an}}(e^{p},e^{t}),
\]  
since there are no strips. 

\begin{proposition}\label{prp:limitfibered}
If $K$ is a fibered knot and $U_{K}^{0}$ denotes the basic disk potential of $M_{K}$ then 
\begin{equation}\label{eq:defAlexfiber}
\Delta_{K}(e^{p}) = (1-e^{p}) \lim_{t\to 0}\,\exp\left(\partial_{t} U_{K}^{0}(p,t)\right).
\end{equation}
\end{proposition}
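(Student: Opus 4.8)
The plan is to combine Theorem~\ref{T:Alex zeta tau} with the identification \eqref{eq:zeta U} of the $Q$-deformed annulus count as the $t$-derivative of the basic disk potential.

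First I would use the fibered hypothesis, as in Remark~\ref{rmk:fibered K}: choose the $S^1$-valued Morse function $f$ on $M_K$ without critical points, so that $\Md$ is graphical and disjoint from the zero section $\R^3$. Then $CF^\ast(\R^3,\Md)$ has no generators and, under the correspondence of Lemma~\ref{annuli loops}, there are no holomorphic strips between $\R^3$ and $\Md$ (these would correspond to rigid flow lines of $\eta_M^\ast$, of which there are none). Hence $\tau_{\rm str}(e^p)=1$, and Theorem~\ref{T:Alex zeta tau} reduces to
\[
\Delta_K(e^p)=(1-e^p)\,\zeta_{\rm an}(e^p),
\]
where $\zeta_{\rm an}(e^p)=\exp$ of the generating function of honest holomorphic annuli from $\R^3$ to $\Md$.

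Next I would turn on the $Q=e^t$ weighting. SFT-stretching as in Section~\ref{sec:SFT-stretch}, together with Proposition~\ref{l:diskpotentialMK}, identifies generalized holomorphic annuli of Euler characteristic zero from $\R^3$ to $\Md$ with trees of holomorphic disks on $\Md$ carrying one interior intersection with the $4$-chain $V$ of \eqref{4 chain}, the intersection accounting for the power of $e^t$. Since applying $\partial_t$ to the connected basic disk potential $U_K^0(p,t)$ produces exactly the count of disks with a marked $V$-intersection, i.e.\ the connected count of such generalized annuli, and exponentiating passes from the connected to the disconnected count, this yields \eqref{eq:zeta U}, namely $\zeta_{\rm an}(e^p,e^t)=\exp(\partial_t U_K^0(p,t))$. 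I would then specialize at $t=0$: the annuli counted by $\zeta_{\rm an}(e^p,e^t)$ are the same geometric objects as those counted by $\zeta_{\rm an}(e^p)$ in Theorem~\ref{T:Alex zeta tau}, the only difference being the $e^t$-weight on each curve, which is $1$ at $t=0$. Hence $\lim_{t\to 0}\zeta_{\rm an}(e^p,e^t)=\zeta_{\rm an}(e^p)$, so $\zeta_{\rm an}(e^p)=\lim_{t\to 0}\exp(\partial_t U_K^0(p,t))$, and combining with the first step gives \eqref{eq:defAlexfiber}.

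The main obstacle will be the rigorous justification of \eqref{eq:zeta U}: the SFT-compactness, transversality, and (obstruction-bundle) gluing statements behind the claim that in the sufficiently stretched limit the generalized annuli are precisely the $V$-marked disks on $\Md$, with the correct multiple-cover contributions. As the authors note, this is handled only at the level of the perturbation scheme of \cite{EkholmShende}. A secondary technical point is the matching of the $t$-weights under stretching, which depends on how $V$ links $\Md$ and on a choice of capping paths, and the verification that $\partial_t U_K^0(p,t)$ admits the $t\to 0$ limit and reproduces a Laurent polynomial in $e^p$; for fibered knots the latter follows from the first step, since the limit must equal $\Delta_K(e^p)/(1-e^p)$.
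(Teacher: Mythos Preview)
Your approach is essentially the paper's: use the fibered hypothesis to kill the strip contribution, reduce Theorem~\ref{T:Alex zeta tau} (equivalently \eqref{alex} via Lemma~\ref{annuli loops}) to $\Delta_K(e^p)=(1-e^p)\zeta_{\rm an}(e^p)$, and then invoke \eqref{eq:zeta U} to identify $\zeta_{\rm an}$ with $\exp(\partial_t U_K^0)$.

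There is one step you are eliding. You claim that setting $t=0$ in $\zeta_{\rm an}(e^p,e^t)$ recovers $\zeta_{\rm an}(e^p)$ from Theorem~\ref{T:Alex zeta tau} because ``the annuli \ldots\ are the same geometric objects''. They are not: $\zeta_{\rm an}(e^p,e^t)$ is a count of generalized annuli for a sufficiently SFT-stretched almost complex structure (where, by \eqref{eq:zeta U}, they have become trees of disks with a $V$-insertion), whereas $\zeta_{\rm an}(e^p)$ in Theorem~\ref{T:Alex zeta tau} counts honest annuli for the flow-tree-compatible $J$ of Lemma~\ref{annuli loops} at small $\delta$. These are two different almost complex structures, and the curves being counted are genuinely different. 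The paper closes this gap with Proposition~\ref{prp:invtauK}: in the fibered case there are no strips, so $\tau_K=\zeta_{\rm an}$, and the deformation invariance of $\tau_K$ lets one interpolate between the stretched $J$ and the flow-tree $J$. You should invoke this invariance explicitly rather than treat the two counts as literally equal.
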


\begin{proof}
Combining \eqref{eq:zeta U} with Proposition \ref{prp:invtauK}, Lemma \ref{annuli loops}, and \eqref{alex}, we have
\[ 
\lim_{t\to 0} \exp(\partial_{t} U_{K}^{0}(p,t)) = \lim_{t\to 0}\zeta_{\rm{an}}(e^{p},e^{t}) = \zeta_{\rm{loop}}(e^{p})=(1-e^{p})^{-1}\Delta_{K}(e^{p}),
\]
as wanted.
\end{proof}

This result, combined with \eqref{eq:Alex U} above, implies that the branch of the augmentation variety cut out by $U_K^0$ in \eqref{x=dU/dp} is tangent along the line $(x,t)=(0,0)$ to the branch cut out by the function $F$ in Remark \ref{Alex from branch}. If these branches coincided, then we would conclude the identity of $Q$-deformations of the Alexander polynomial
$$
(1-e^p) \tau_K(e^p,e^t) = \Delta(e^p,e^t),
$$
where the left side is defined in \eqref{eq:Floertorsion} and the right side is defined in \eqref{deform Alex}. It seems likely that this may hold for every knot $K$.

\subsection{Deformation of the Alexander polynomial for non-fibered knots}\label{sec:defAlexnonfiber}
In this section we consider the geometry of deformations of the Alexander polynomial for non-fibered knots. This is more complicated than the case of fibered knots and involves counts of curves with negative boundary punctures, which is not yet properly understood. First ideas in this direction are presented in \cite[Section 4]{EGGKPS}. 

If $K'$ is fibered then the leading coefficient of $\Delta_{K'}(e^{p})$ equals $\pm 1$. Consider a non-fibered knot $K$ with this leading coefficient not equal to $1$. Our first observation is that Proposition \ref{prp:limitfibered}, where $U_{K}^{0}(p,t)$ is the basic disk potential, does \emph{not} hold for $K$. The reason is that, for action reasons, any non-constant holomorphic disk with boundary in $M_{K}$ contributes to the $e^{kp}$-coefficient in $U_{K}^{0}$ where $k>0$. Therefore the leading coefficient in the right-hand side of \eqref{eq:defAlexfiber} equals $1$, whereas the leading coefficient in the left-hand side is not equal to $1$. 

Since $(1-e^p)\tau_K(e^p,e^t)$ from \eqref{eq:Floertorsion} gives a $Q$-deformation of the Alexander polynomial also in the non-fibered case, it is natural to expect that this deformation can be expressed in terms of a disk potential for general $K$. Applying SFT-stretching to \eqref{eq:Floertorsion}, one expects corrections to $U_K^0$ to come from disks with negative punctures at the Reeb chords $\alpha_{ij}$ of grading 0, connecting cotangent fibers over the points $\xi_{i}$ and $\eta_{j}$. More precisely, given a collection $\epsilon$ of functions $\alpha_{ij}(t,p)$ for all Reeb chords $\alpha_{ij}$ connecting $\xi_{i}$ to $\eta_{j}$, we define the disk potential $U_{K}^{0,\epsilon}(t,p)$ as the count of generalized disks in a sufficiently SFT-stretched almost complex structure on $T^{\ast}\R^{3}$ with boundary on $M_{K}$ and negative punctures at chords $\alpha_{ij}$, where we write a factor $\alpha_{ij}(t,p)$ for each negative puncture where the disk is asymptotic to $\alpha_{ij}$. We then expect the following.

\begin{conjecture}\label{conj1}
For any knot $K$ there is a collection of functions $\epsilon = \{ \alpha_{ij}(t,p) \}$ such that the following two properties hold
\begin{enumerate}
\item The disk potential $U_{K}^{0,\epsilon}(p,t)$ is invariant under deformations and 
$x=\partial U^{0,\epsilon}_{K}/\partial p$ cuts out a branch of the augmentation variety.
\item The disk potential $U_{K}^{0,\epsilon}(p,t)$ recovers $\Delta_{K}(e^{p})$ as $t\to 0$. More precisely,
\[
\lim_{t\to 0}\,\exp\left(\partial_{t} U_{K}^{0,\epsilon}(p,t)\right)=(1-e^{p})^{-1}\Delta_{K}(e^{p}).
\]
\end{enumerate}
\end{conjecture}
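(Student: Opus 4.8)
The plan is to push the SFT-stretching of Propositions \ref{l:diskpotentialMK} and \ref{prp:invtauK} one step further, retaining the holomorphic disks that fail to leave the $\epsilon$-neighborhood of the zero section by allowing negative punctures at the grading-$0$ Reeb chords $\alpha_{ij}$ joining the cotangent fibers over $\xi_i$ and $\eta_j$. First I would fix the stretched geometry as in Proposition \ref{l:diskpotentialMK}: deform $M_K$ so that it agrees with a union of cotangent fibers $F_i$ near $\R^3$, stretch along an $\epsilon$-cosphere bundle, and record that in the flat metric the only Reeb chords in the negative end are the straight-line chords $\alpha_{ij}$, all of Conley--Zehnder index $0$, and that the moduli space of disks on $\bigcup_i F_i$ with $m$ positive punctures has dimension $m$. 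With this in place I would define $U_K^{0,\epsilon}(p,t)$ as the generalized-disk count with boundary on $M_K$ and a factor $\alpha_{ij}(t,p)$ inserted at each negative puncture asymptotic to $\alpha_{ij}$, and propose $x=\partial_p U_K^{0,\epsilon}$, exactly as in \eqref{x=dU/dp}, as the branch of the augmentation variety in part (1).

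The core of the argument is to identify the correct auxiliary functions $\alpha_{ij}(t,p)$. I would impose the bounding-cochain (Maurer--Cartan) condition that the algebraic count of rigid disks with one positive puncture at any chord $\alpha_{kl}$ and arbitrary negative punctures among the $\alpha_{ij}$ vanishes, and solve it inductively with respect to the action filtration. At the lowest action level the only contributions are the trivial strips over the straight lines from $\xi_i$ to $\eta_j$ appearing in the proof of Proposition \ref{prp:invtauK}, which pins down the leading term of $\alpha_{ij}$ as the entries of the Floer propagator $-D^{-1}(e^p,e^t)$, with $D_{ij}=|\mathcal{M}(\xi_i,\eta_j)|$ as in \eqref{matrices}; the higher-action corrections are then forced term by term. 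Granting such a solution, deformation invariance of $U_K^{0,\epsilon}$ follows from the same codimension-one analysis as in Propositions \ref{l:diskpotentialMK} and \ref{prp:invtauK}: in a generic one-parameter family the only new boundary phenomena are $(-1)$-disks splitting off at a negative puncture or at the boundary of the $4$-chain $V$ of \eqref{4 chain}, and the Maurer--Cartan condition is designed precisely so that these cancel. The augmentation-variety statement in part (1) then follows as in \cite{AENV}, since $\partial_p U_K^{0,\epsilon}$ parametrizes the image under $\pi$ of the augmentation disks of $M_K$ in the stretched limit.

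For part (2) I would show that $\exp(\partial_t U_K^{0,\epsilon})$ equals the Floer torsion $\tau_K(e^p,e^t)=\zeta_{\rm an}(e^p,e^t)\cdot\tau_{\rm str}(e^p,e^t)$ of \eqref{eq:Floertorsion}. The disks with no negative puncture contribute $\zeta_{\rm an}=\exp(\partial_t U_K^0)$ by \eqref{eq:zeta U}, while summing the disks with negative punctures, weighted by the propagator $\alpha_{ij}\sim -D^{-1}$, reassembles — via the matrix identity $\partial\log\det D=\Tr(D^{-1}\partial D)$ already used in Proposition \ref{Alex F general} — into $\partial_t\log\det D=\partial_t\log\tau_{\rm str}$. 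Combining these gives $\exp(\partial_t U_K^{0,\epsilon})=\tau_K$, and then Proposition \ref{prp:invtauK} (invariance of $\tau_K$), Lemma \ref{annuli loops}, and formula \eqref{alex} yield $\lim_{t\to0}\exp(\partial_t U_K^{0,\epsilon})=\zeta_{\rm loop}(e^p)\cdot\tau_{\rm Morse}(e^p)=(1-e^p)^{-1}\Delta_K(e^p)$, as required; this simultaneously recovers Proposition \ref{prp:limitfibered} in the fibered case, where $D$ is trivial.

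The hard part will be the holomorphic-curve foundations for the negative-puncture disks: establishing transversality and the gluing formulas when branched multiple covers with constant components appear at the chords $\alpha_{ij}$, which — as flagged after Lemma \ref{l:gluingmultipleannuli} and in \cite{EkholmKucharskiLonghi2} — is genuinely subtle and requires an obstruction-bundle (or polyfold) treatment beyond the bare-curve perturbation scheme used elsewhere in this paper. Concretely, proving that the Maurer--Cartan system for $\{\alpha_{ij}(t,p)\}$ actually admits a solution, that the resulting generating series is well-defined and convergent in the action filtration, and that it reproduces $\log\det D$ on the nose rather than only up to lower-Euler-characteristic corrections, is where the real work lies, and is the reason the statement is currently only a conjecture.
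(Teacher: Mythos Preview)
The statement in question is labeled a \emph{conjecture} in the paper and is not proved there; the paper offers no proof to compare against. After stating the conjecture, the paper only remarks that condition~(1) requires $\epsilon$ to be an augmentation of a dg-algebra generated by the Reeb chords between cotangent fibers, and refers to \cite[Section 4]{EGGKPS} for further details. So there is nothing for your proposal to match or deviate from.

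That said, your outline is consistent with the paper's hints and with the surrounding discussion in Section~\ref{sec:defAlexnonfiber}. Your Maurer--Cartan condition on the $\alpha_{ij}$ is exactly the augmentation condition the paper alludes to, and your identification of the leading term of $\alpha_{ij}$ with entries of $-D^{-1}$, together with the $\Tr(D^{-1}\partial D)=\partial\log\det D$ mechanism from Proposition~\ref{Alex F general}, is a reasonable heuristic for why the negative-puncture contributions should reassemble $\tau_{\rm str}$. You are also right to flag the genuine obstacles: existence of a solution to the Maurer--Cartan system, transversality and gluing for disks with negative punctures (especially in the presence of multiple covers and constant components), and the precise bookkeeping showing that the reassembly yields $\log\det D$ exactly rather than up to corrections. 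These are precisely the issues the paper leaves open, and your final paragraph correctly identifies them as the reason the statement remains conjectural.
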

Condition $(1)$ in Conjecture \ref{conj1} requires $\epsilon$ to be an augmentation of a dg-algebra generated by all the Reeb chords connecting cotangent fibers at the critical points $\xi_{i}$ and $\eta_{j}$. For more details on this, we refer to \cite[Section 4]{EGGKPS}.  

\bibliographystyle{alpha}
\bibliography{biblio}
\end{document}